\newcommand{\Pic}{\operatorname{Pic}}
\newcommand{\NS}{\operatorname{NS}}
\newcommand{\Hom}{\operatorname{Hom}}
\newcommand{\End}{\operatorname{End}}
\newcommand{\Spec}{\operatorname{Spec}}
\newcommand{\Id}{\operatorname{Id}}
\newcommand{\sA}{\mathcal{A}}
\newcommand{\sB}{\mathcal{B}}
\newcommand{\sC}{\mathcal{C}}
\newcommand{\sM}{\operatorname{\mathbf{Mot}}}
\renewcommand{\L}{\mathbb{L}}
\newcommand{\C}{\mathbf{C}}
\newcommand{\G}{\mathbb{G}}
\newcommand{\Q}{\mathbf{Q}}
\newcommand{\Z}{\mathbf{Z}}
\newcommand{\GL}{\mathbf{GL}}
\newcommand{\bC}{\mathbb{C}}
\newcommand{\bU}{\mathbb{U}}
\newcommand{\un}{\mathbf{1}}
\newcommand{\SU}{\operatorname{S\mathbb{U}}}
\renewcommand{\Vec}{\operatorname{\mathbf{Vec}}}
\newcommand{\Rep}{\operatorname{\mathbf{Rep}}}
\newcommand{\LCorr}{\operatorname{\mathbf{LCorr}}}
\newcommand{\LMot}{\operatorname{\mathbf{LMot}}}
\newcommand{\Ab}{\operatorname{\mathbf{Ab}}}
\newcommand{\Abs}{\operatorname{\mathbf{Abs}}}
\newcommand{\Aut}{\operatorname{\mathbf{Aut}}}
\newcommand{\Ker}{\operatorname{Ker}}
\newcommand{\Coker}{\operatorname{Coker}}
\newcommand{\IM}{\operatorname{Im}}
\newcommand{\op}{{\operatorname{op}}}
\newcommand{\alg}{{\operatorname{alg}}}
\newcommand{\num}{{\operatorname{num}}}
\renewcommand{\epsilon}{\varepsilon}
\renewcommand{\phi}{\varphi}
\newcommand{\eff}{{\operatorname{eff}}}
\newcommand{\inj}{\hookrightarrow}
\newcommand{\surj}{\rightarrow\!\!\!\!\!\rightarrow}
\newcommand{\iso}{\overset{\sim}{\longrightarrow}}
\newcommand{\by}{\xrightarrow}
\newcommand{\tto}{\dashrightarrow}
\newtheorem{Thm}{Theorem}
\newtheorem{thm}{Theorem}[section]
\newtheorem{prop}[thm]{Proposition}
\newtheorem{cor}[thm]{Corollary}
\newtheorem{lemma}[thm]{Lemma}
\theoremstyle{definition}
\newtheorem{defn}[thm]{Definition}
\theoremstyle{remark}
\newtheorem{rk}[thm]{Remark}
\newtheorem{rks}[thm]{Remarks}
\newtheorem{qn}[thm]{Question}
\newtheorem{Ex}{Example}
\newcounter{spec}
\newenvironment{thlist}{\begin{list}{\rm{(\roman{spec})}}%
{\usecounter{spec}\labelwidth=20pt\itemindent=0pt\labelsep=10pt}}%
{\end{list}}%
\numberwithin{equation}{section}
\begin{document}
\title{Chow-Lefschetz motives}
\author{Bruno Kahn}
\address{CNRS, Sorbonne Université and Université Paris Cité, IMJ-PRG\\ Case 247\\4 place
Jussieu\\75252 Paris Cedex 05\\France}
\email{bruno.kahn@imj-prg.fr}
\date{April 9, 2024}
\begin{abstract}
We develop Milne's theory of Lefschetz motives for general adequate equivalence relations and over a not necessarily algebraically closed base field. The corresponding categories turn out to enjoy all properties predicted by standard and less standard conjectures, in a stronger way: algebraic and numerical equivalences agree in this context. We also compute the Tannakian group associated to a Weil cohomology in a different and more conceptual way than Milne's case-by-case approach.
\end{abstract}
\keywords{Motives, abelian varieties, divisors}
\subjclass[2020]{14K05, 14C15, 14C20}
\maketitle

\hfill {\it To the memory of Jacob Murre, one of the kindest human beings I have known.}

\tableofcontents

\section*{Introduction}

In two fundamental papers \cite{milne,milne2}, Milne put together various ``Lefschetz groups'' which had been attached to abelian varieties in the literature, showing that they assemble to form a Tannakian group associated to a category $\LMot(k)$ of ``Lefschetz motives'' over an algebraically closed field $k$.\footnote{To avoid confusions with \emph{the} Lefschetz motive and Lefschetz decompositions, and also to honour Milne's invention, it might be better to change the name ``Lefschetz motives'' to ``Milne motives''. We shall refrain from it here, but hope this suggestion will be taken up by other mathematicians.} This rigid $\otimes$-category has remarkable properties: homological and numerical equivalences agree for any Weil cohomology, and the fixed points of the Lefschetz group on the cohomology of an abelian $k$-variety $A$ consist of the subring generated by divisor classes. One can specialise Lefschetz motives of CM abelian varieties from characteristic $0$ to characteristic $p$, which allowed Milne to prove his famous theorem: the Hodge conjecture for CM abelian varieties over $\C$ implies the Tate conjecture for abelian varieties over finite fields \cite[Th. 7.1]{milne}. André used Milne's method in  \cite{andre} to prove an unconditional result: all Tate cycles over an abelian variety in positive characteristic are ``motivated'' in his sense.  
Three questions arise from Milne's construction:

\begin{enumerate}
\item \label{q1} Does $\LMot(k)$ make sense for other adequate equivalence relations than homological and numerical equivalences? This question is raised in \cite[p. 671]{milne2}.
\item\label{q2} Can one define $\LMot(k)$ when $k$ is not algebraically closed?\footnote{In \cite[Introduction]{milne}, it is announced that $\LMot(k)$ is constructed over any field $k$, but \cite[1.4]{milne} refers to \cite[5.5]{milne2} which assumes $k$ algebraically closed (the $k$ of \cite{milne} is the $\Omega$ of \cite{milne2}).}
\item\label{q3} Can one give a direct proof of Milne's computation of the Tannakian group of his category in terms of Lefschetz groups, which follows in \cite[\S 3]{milne2} from a case-by-case analysis?
\end{enumerate}

The aim of this paper is to answer these questions in the affirmative.

The first two questions amount to asking whether sums of intersection products of divisor classes on abelian varieties are preserved under direct images by projections of the form $A\times B\to B$, where $A,B$ are abelian varieties. For $k$ algebraically closed and for homological equivalence, Milne proves this in \cite[Cor. 5.5]{milne2} as a consequence of his computation of Lefschetz groups. Here, we prove it in general. Namely, for an abelian variety $A$ over a field $k$, let  $CH^*(A)$ denote the Chow ring of $A$, with rational coefficients, and  let $L^*(A)$ be the subring of $CH^*(A)$ generated by $CH^1(A)$. 

\begin{Thm}\label{t2} Let $f:A\to B$ be a homomorphism of abelian varieties. Then $f_* L^*(A)\subseteq L^*(B)$.
\end{Thm}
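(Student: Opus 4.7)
The plan is to reduce the theorem to its analogue for projection morphisms and then handle that case by Fourier–Mukai. Factor $f$ as $A \xrightarrow{(\mathrm{id}_A, f)} A \times B \xrightarrow{p_B} B$, so that $f_* = p_{B*} \circ (\mathrm{id}_A, f)_*$. The group automorphism $\psi : A \times B \to A \times B$ defined by $(a, b) \mapsto (a, b + f(a))$ is a homomorphism of abelian varieties, hence $\psi_*$ preserves $L^*(A \times B)$; since $(\mathrm{id}_A, f) = \psi \circ (\mathrm{id}_A, 0)$ and the zero-section embedding sends $\xi$ to $p_A^*\xi \cdot p_B^*[0_B]$, one has $(\mathrm{id}_A, f)_*(\xi) = \psi_*(p_A^*\xi \cdot p_B^*[0_B])$. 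Thus the theorem reduces to two statements: (a) the projection claim $p_{B*}(L^*(A \times B)) \subseteq L^*(B)$ and (b) the origin claim $[0_B] \in L^{\dim B}(B)$ for every abelian variety $B$. Assertion (b) follows from (a) via Fourier inversion: with $\sP$ the Poincar\'e bundle on $B^\vee \times B$ and $q_B$ the projection to $B$, one has $[0_B] = (-1)^{\dim B}\, q_{B*}\bigl(c_1(\sP)^{2 \dim B}/(2 \dim B)!\bigr)$, and the integrand lies in $L^{2 \dim B}(B^\vee \times B)$. So the theorem reduces entirely to (a).

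For (a), decompose divisors on $A \times B$ via $\Pic(A \times B) = \Pic(A) \oplus \Pic(B) \oplus \Hom(A, B^\vee)$; the last summand contributes mixed line bundles $\mathcal{L}_\phi = (\phi \times \mathrm{id}_B)^* \sP_B$ for $\phi \in \Hom(A, B^\vee)$. By the polarization identity, every element of $L^*(A \times B)$ is a $\Q$-linear combination of monomials $p_A^*\alpha \cdot p_B^*\beta \cdot c_1(\mathcal{L}_\phi)^n$ with $\alpha \in L^*(A)$, $\beta \in L^*(B)$, $\phi \in \Hom(A, B^\vee)$, $n \geq 0$. The projection formula absorbs the $p_B^*\beta$ factor, and the factorization $p_B = q_B \circ (\phi \times \mathrm{id}_B)$ combined with the base-change identity $(\phi \times \mathrm{id}_B)_* \circ p_A^* = q_{B^\vee}^* \circ \phi_*$ for the (cartesian) square connecting $p_A, q_{B^\vee}, \phi$ and $\phi \times \mathrm{id}_B$ yields
\[
\sum_{n \geq 0} \frac{1}{n!}\, p_{B*}\bigl(p_A^*\alpha \cdot c_1(\mathcal{L}_\phi)^n\bigr) \;=\; q_{B*}\bigl(q_{B^\vee}^*(\phi_*\alpha) \cdot e^{c_1(\sP_B)}\bigr) \;=\; \mathcal{F}_{B^\vee}(\phi_*\alpha),
\]
so each individual pushforward realizes a graded component of the Fourier--Mukai transform $\mathcal{F}_{B^\vee}(\phi_*\alpha)$.

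Closing the loop requires two further inputs: that $\phi_*(L^*(A)) \subseteq L^*(B^\vee)$ (the theorem itself, applied to the homomorphism $\phi: A \to B^\vee$), and that $\mathcal{F}_{B^\vee}$ maps $L^*(B^\vee)$ into $L^*(B)$ (another instance of (a), for the projection $B^\vee \times B \to B$). The main obstacle is to organize these apparently circular reductions into a terminating argument. I would attempt a simultaneous induction on $\dim A + \dim B$, anchored by the case where $A$ is a point or an elliptic curve — there Mukai's explicit computation of the Fourier--Mukai transform on semihomogeneous line bundles reduces the Fourier identity above to manipulations involving only divisor classes — and then propagate to the general case by the Fourier identity, noting that each circular-looking step is in fact an instance of the theorem at a strictly smaller value of $\dim A$ once one unpacks $\phi_*\alpha$ into its components (those with smaller-dimensional source).
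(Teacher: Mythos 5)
The factorization of $f$ through $A\times B$, the reduction of the zero-section to (a) and (b), the Weil decomposition of $\Pic(A\times B)$, and the Fourier identity
\[
\sum_{n\ge 0}\tfrac{1}{n!}\,p_{B*}\bigl(p_A^*\alpha\cdot c_1(\mathcal{L}_\phi)^n\bigr)
=q_{B*}\bigl(q_{B^\vee}^*(\phi_*\alpha)\cdot e^{c_1(\sP_B)}\bigr)
=\sF_{B^\vee}(\phi_*\alpha)
\]
are all correct manipulations. The gap is that the argument never closes. To use the displayed identity you need (i) $\phi_*\alpha\in L^*(B^\vee)$, which is the theorem for the homomorphism $\phi\colon A\to B^\vee$ and hence, by your own reduction, for the projection $A\times B^\vee\to B^\vee$ of the \emph{same} total dimension $\dim A+\dim B$; and (ii) that $\sF_{B^\vee}$ carries $L^*(B^\vee)$ into $L^*(B)$, which is again an instance of (a), this time for the projection $B^\vee\times B\to B$ of total dimension $2\dim B$, which can exceed $\dim A+\dim B$. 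So your induction parameter $\dim A+\dim B$ does not strictly decrease; the sentence ``each circular-looking step is in fact an instance of the theorem at a strictly smaller value of $\dim A$'' is not substantiated, since $\phi$ has the same source $A$ and nothing in the reduction lowers $\dim A$. Anchoring at $A$ a point or an elliptic curve does not repair this, because you never actually reach those cases.

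The paper sidesteps the circularity by first proving a structure theorem (Theorem \ref{p1}): $L^*(A)=P_*(A)$, the Pontrjagin subring generated by $CH_0(A)(k)$ and $CH_1^{(0)}(A)$. This identification is obtained directly from Beauville's closed formulas $\gamma^p(d)=\nu_d\gamma_*^{g-p}(c_d)$ and $L(d,a_1,\dots,a_p,q)=(-1)^p\gamma^{p+q}(d)*\log_*[a_1]*\dots*\log_*[a_p]$ (which encode exactly the Fourier computation you are invoking, but in a form that requires no induction over abelian varieties). Once $L^*=P_*$ is known, Theorem \ref{t2} is immediate: $f_*$ commutes with the Pontrjagin product along a homomorphism (Lemma \ref{l2}~a)), and it manifestly preserves both generating pieces $CH_0(\cdot)(k)$ and $CH_1^{(0)}(\cdot)$. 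If you want to salvage your line of attack, the missing ingredient is precisely a non-inductive characterization of $L^*$ of this kind, proved before, not in the course of, the pushforward statement.
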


The proof of Theorem \ref{t2} is completely different from the one of \cite{milne2}, and relies on Beauville's computations in \cite{beauville1}, which ultimately depend on the Fourier transform on abelian varieties. More precisely, it is a consequence of the following result. Let $P_*(A)$ be the subring of $CH_*(A)$, for the Pontrjagin product, generated by $CH_0(A)(k)$ and $CH_1^{(0)}(A)$, where $CH_0(A)(k)$ is the subgroup of $CH_0(A)$ generated by the $0$-cycles $[a]$ for $a\in A(k)$ and $CH_1^{(0)}(A)$ is Beauville's  eigenspace of weight $0$ in $CH_1(A)$ (\cite{beauville2}, see Notation below). 

\begin{Thm}\label{p1} $L^*(A) = P_*(A)$.
\end{Thm}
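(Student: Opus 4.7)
My approach is to reformulate the identity $L^*(A) = P_*(A)$ on the dual abelian variety $\hat A$ via the Beauville--Mukai Fourier transform $\mathcal{F} = \mathcal{F}_A : CH^*(A) \iso CH^*(\hat{A})$. The properties I use are: (i) $\mathcal{F}$ exchanges products, $\mathcal{F}(x *_A y) = \mathcal{F}(x) \cdot_{\hat{A}} \mathcal{F}(y)$; (ii) Beauville's compatibility $\mathcal{F}(CH^p_{(s)}(A)) = CH^{g-p+s}_{(s)}(\hat{A})$; and (iii) the formula $\mathcal{F}([a]) = \exp(c_1(\mathcal{P}_a))$ for $a \in A(k)$, where $\mathcal{P}_a$ is the restriction to $\{a\}\times\hat{A}$ of the Poincaré bundle. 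Under (iii) the map $a \mapsto c_1(\mathcal{P}_a)$ realises the canonical isomorphism $A(k) \otimes \mathbf{Q} \iso CH^1_{(1)}(\hat{A})$, valid over any field $k$. Fourier inversion $\mathcal{F}_{\hat A}\circ\mathcal{F}_A = (-1)^g [-1]^*_A$ will also be used.

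The central step is the identity $\mathcal{F}(P_*(A)) = L^*(\hat{A})$. By (i), the left side is the intersection subring of $CH^*(\hat{A})$ generated by $\mathcal{F}(CH_0(A)(k))$ and $\mathcal{F}(CH_1^{(0)}(A))$; by (iii) and (ii), these equal respectively the $\mathbf{Q}$-span of $\{\exp(c_1(\mathcal{P}_a)) : a \in A(k)\}$ and $CH^1_{(0)}(\hat{A})$. The inclusion into $L^*(\hat{A})$ is immediate, because each exponential is a polynomial in a divisor class. For the reverse inclusion I need $CH^1_{(1)}(\hat{A}) \subseteq \mathcal{F}(P_*(A))$, and here a Vandermonde argument does the job: by additivity of $a \mapsto c_1(\mathcal{P}_a)$, for every $n \in \mathbf{Z}$ the class $\exp(c_1(\mathcal{P}_{na})) = \exp(n \cdot c_1(\mathcal{P}_a))$ lies in the subring. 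The $g+1$ such elements for $n = 0, 1, \ldots, g$, expanded in the basis $\{c_1(\mathcal{P}_a)^k/k! : 0 \leq k \leq g\}$ (truncation being legal as $c_1(\mathcal{P}_a)^{g+1} = 0$ on a $g$-dimensional variety), form an invertible Vandermonde system with matrix $(n^k)_{0 \leq n, k \leq g}$; inversion yields each $c_1(\mathcal{P}_a)^k$, in particular $c_1(\mathcal{P}_a)$ itself. As $a$ varies over $A(k)$, this recovers all of $CH^1_{(1)}(\hat{A})$, and the identity $\mathcal{F}(P_*(A)) = L^*(\hat{A})$ follows.

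To conclude, I apply the central step to $\hat{A}$ in place of $A$ (using $\widehat{\hat A} = A$), obtaining the parallel identity $\mathcal{F}_{\hat A}(P_*(\hat A)) = L^*(A)$. Combining this with the first identity via Fourier inversion, and using that $[-1]^*_A$ preserves both $CH^1(A)$ and the generators of $P_*(A)$, hence both $L^*(A)$ and $P_*(A)$ as subsets of $CH^*(A)$, the two identities give $L^*(A) = P_*(A)$.

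The principal obstacle is the Vandermonde step. Its correctness hinges on two ingredients: the group-theoretic additivity $c_1(\mathcal{P}_{na}) = n \cdot c_1(\mathcal{P}_a)$, which ensures that higher multiples remain inside the subring generated by $\{\exp(c_1(\mathcal{P}_b))\}_{b \in A(k)}$; and the surjectivity of $a \mapsto c_1(\mathcal{P}_a)$ onto $CH^1_{(1)}(\hat{A})$ after $\otimes \mathbf{Q}$, i.e., the rational form of the double-duality isomorphism $A \iso \widehat{\hat A}$, which requires no hypothesis on the base field.
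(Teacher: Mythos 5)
Your central identity $\mathcal{F}_A(P_*(A)) = L^*(\hat{A})$ is correct and cleanly argued; in particular the Vandermonde device for recovering $c_1(\mathcal{P}_a)$ (and all its powers) from the exponentials $\mathcal{F}([na]) = \exp(n\,c_1(\mathcal{P}_a))$ is a nice way to see that the images of $CH_0(A)(k)$ and $CH_1^{(0)}(A)$ generate the same intersection subring as $CH^1(\hat{A})$. The difficulty lies in the final paragraph. Applying $\mathcal{F}_{\hat{A}}$ to the central identity and using Fourier inversion together with $[-1]^*$-invariance gives $P_*(A)=\mathcal{F}_{\hat{A}}(L^*(\hat{A}))$; the central identity applied to $\hat{A}$ gives $L^*(A)=\mathcal{F}_{\hat{A}}(P_*(\hat{A}))$. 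To deduce $L^*(A)=P_*(A)$ from these two facts you would need $L^*(\hat{A})=P_*(\hat{A})$, i.e.\ the theorem for $\hat{A}$, and the same reduction sends you straight back to $A$ since $\widehat{\hat{A}}=A$. A linear bijection $\mathcal{F}_A$ that swaps the two pairs of subspaces, $P_*(A)\leftrightarrow L^*(\hat{A})$ and $L^*(A)\leftrightarrow P_*(\hat{A})$, imposes no constraint forcing equality within either pair (think of the coordinate swap on $\mathbf{Q}^2$ exchanging the two axes). So the argument is circular as written.

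What is missing is one unconditional inclusion proved without appealing to the statement on the dual side. The paper's proof produces exactly this: after Lemma \ref{l0} reduces $L^*(A)$ to the explicit generators $L(d,a_1,\dots,a_p,q)$, Beauville's formulas \eqref{eqb2} and \eqref{eqb1} rewrite each of them as a Pontrjagin polynomial \eqref{eqb3}, which yields simultaneously $L^*(A)\subseteq P_*(A)$, the $*$-stability of $L^*(A)$, and the containment of $CH_0(A)(k)$ and $CH_1^{(0)}(A)$ in $L^*(A)$ (the latter with the help of Lemma \ref{l1}), hence both inclusions. Note that \eqref{eqb2}--\eqref{eqb1} are precisely the ``coordinate'' form of your Fourier properties (i) and (iii), so the two approaches use the same essential inputs; but to break the circularity you must still descend to explicit generators (or supply some equivalent one-directional argument) rather than rely solely on the dual swap identities.
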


As Beauville pointed out, the version of this theorem for numerical equivalence was proven previously by Schoen \cite[Prop. 1.4]{schoen}, using a different method. See also Polishchuk \cite[Th. 2.1]{pol}.

Using Theorem \ref{t2}, we associate in Definition \ref{d1} a rigid $\otimes$-category of Lefschetz motives $\LMot_\sim(k)$ to any adequate equivalence relation $\sim$ on algebraic cycles. These categories turn out to have rather wonderful properties:

\begin{Thm}\label{t3} a) Let $\alg$ (resp. $\num$) denote algebraic (resp. numerical) equivalence.  Then the projection
\[\LMot_\alg(k)\to \LMot_\num(k)\]
is an isomorphism of semi-simple abelian categories. In particular, the analogue of Voevodsky's smash-nilpotence conjecture \cite{voe} holds, and so does \emph{a fortiori} the analogue of the standard conjectures $D$ and $C$.\\
b) The Künneth decompositions given by a) provide $\LMot_\num(k)$ with a canonical weight grading (Definition \ref{d5.1}). After changing the commutativity constraint as usual, $\LMot_\num(k)$ becomes Tannakian and any Weil cohomology yields a fibre functor.
\end{Thm}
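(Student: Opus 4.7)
The heart of a) is the equality of algebraic and numerical equivalences on $L^*(A)$ for every abelian variety $A$; everything else will then follow formally. Indeed, Hom groups in $\LMot_\sim(k)$ are assembled from $L^*(A\times B)/\sim$ for $A,B$ abelian, and $A\times B$ is again abelian, so this reduces to the case of a single abelian variety. My approach to $\alg=\num$ on $L^*(A)$: since algebraic equivalence is an ideal in $CH^*(A)$ and coincides with $\Pic^0(A)\otimes\Q$ on divisors, both $L^*(A)/\alg$ and $L^*(A)/\num$ are quotients of the symmetric algebra $\Sym^*\NS(A)_\Q$, and the point is to match their kernels. Theorem \ref{p1} rewrites $L^*(A)$ as the Pontryagin subalgebra $P_*(A)$, which after the Fourier transform and Beauville's decomposition of $CH_*(A)$ admits a tractable cohomological description: the generators $CH_0(A)(k)$ and $CH_1^{(0)}(A)$ inject into a Weil cohomology modulo algebraic equivalence. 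This should be enough to identify the two kernels.

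The remainder of a) is then formal. Jannsen's theorem realises $\LMot_\num(k)$ as a semi-simple abelian full $\otimes$-subcategory of numerical Chow motives, and since the projection $\LMot_\alg(k)\to\LMot_\num(k)$ is now an equivalence, $\LMot_\alg(k)$ inherits the same properties. Standard conjecture $D$ ($\hom=\num$) is immediate from the chain $\alg\subseteq\hom\subseteq\num$, and conjecture $C$ (Künneth) holds because the Deninger--Murre Chow--Künneth projectors of an abelian variety are polynomials in divisor classes (coming from the Poincaré bundle and polarisations) and so live in $L^*(A\times A)$. Voevodsky's smash-nilpotence conjecture reduces, via Voisin's theorem that algebraically trivial cycles are smash-nilpotent, to the same agreement $\alg=\num$.

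For b), the Chow--Künneth projectors just mentioned equip each $M\in\LMot_\num(k)$ with a canonical decomposition $M=\bigoplus_i M_i$, which becomes the weight grading of Definition \ref{d5.1}. The standard Koszul-sign modification of the symmetry constraint (multiplying $\sigma_{M_m,N_n}$ by $(-1)^{mn}$) converts $\LMot_\num(k)$ into a rigid symmetric $\otimes$-category with non-negative integer dimensions; combined with semi-simplicity and $\End(\un)=\Q$, Deligne's Tannakian criterion then yields a Tannakian structure over $\Q$. For any Weil cohomology $H^*$ with coefficients in a field $K$, the cohomological realisation $M\mapsto H^*(M)$ is a $\otimes$-functor $\LMot_\num(k)\to\Vec_K$ by Künneth, symmetric monoidal once the sign modification is in place, exact by semi-simplicity, and faithful because $\hom=\num$ on $L^*$ by part a); it is therefore a fibre functor. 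The principal obstacle throughout is the equality $\alg=\num$ on $L^*(A)$; once this is in hand, everything else combines known results of Jannsen, Voisin, Deninger--Murre, and Deligne.
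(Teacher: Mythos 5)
The overall strategy is correct: you correctly identify the crux as proving $\alg=\num$ on $L^*(A)$, and your treatment of semi-simplicity (Jannsen), of standard conjectures $C$ and $D$, of smash-nilpotence via Voisin, and of part b) (weight grading from Deninger--Murre projectors, Koszul sign change, Weil cohomology as fibre functor) all agree in substance with what the paper does. However, there is a genuine gap at the crucial step.

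Your argument that $L^*(A)/\alg\to L^*(A)/\num$ is an isomorphism is not actually a proof. You observe that both sides are quotients of $\Sym^*\NS(A)_\Q$ and that the Pontryagin generators $CH_0(A)(k)$ and $CH_1^{(0)}(A)$ inject into cohomology modulo algebraic equivalence; you then write ``this should be enough to identify the two kernels.'' It is not: injectivity of the generating subspaces does not control the kernel of the multiplication map from the symmetric algebra, since numerically trivial relations among \emph{products} of divisor classes could a priori fail to be algebraically trivial. The paper's proof handles precisely this point by combining two nontrivial ingredients: the Beauville decomposition $L^p(A)=\bigoplus_{s\ge 0}L^p_{(s)}(A)$ (Lemma~\ref{l0}), together with the observation that all pieces with $s>0$ are algebraically trivial (they are products involving a class in $\Pic^0$), and, for the $s=0$ piece, Lemma~\ref{l3}, whose proof depends essentially on O'Sullivan's lifting theorem: O'Sullivan's ring-theoretic section of $CH^*(A)\to A^*_\num(A)$ landing in $CH^*_{(0)}(A)$ is what forces $L^p_{(0)}(A)\to L^p_\num(A)$ to be injective (and not just surjective). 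Without invoking O'Sullivan's theorem or an equivalent substitute, the injectivity of $L^p_{(0)}(A)\to L^p_\num(A)$---which is exactly the Lefschetz-cycle case of Beauville's conjecture (i)---is not established, and that is the whole content of part a). You should cite Lemma~\ref{l3} of the paper, or equivalently \cite[Th.~6.1.1]{os}, and explain the reduction to Beauville weight $0$, to close the gap.

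A secondary point: in discussing smash-nilpotence you wrote that the conjecture ``reduces, via Voisin's theorem, to the same agreement $\alg=\num$.'' The logical direction should be stated the other way: once $\alg=\num$ is known in $L^*$, every numerically trivial class in $L^*_\alg$ is algebraically trivial, hence smash-nilpotent by Voevodsky--Voisin; alternatively, as the paper notes, one can argue directly from Kimura finite-dimensionality of $Lh^1(A)$.
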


This theorem is in stark contrast with the case of usual motives, where algebraic and homological equivalences do \emph{not} agree, by a famous theorem of Griffiths \cite{griffiths}. A more recent and relevant example, due to Schoen, is the cube of the Fermat elliptic curve $E$ in characteristic $0$ \cite[(0.2) and (14.1)]{schoen2}. As a consequence, $\dim_\Q\Coker(L^*(E^3)\to CH^*(E^3))=+\infty$; this is remarkable, since this map becomes an isomorphism modulo numerical equivalence by \cite{imai} (see also \cite{ff-ellcurves}).

In case $\sim$ is rational equivalence, we simply write $\LMot(k)$ for $\LMot_\sim(k)$. Then:

\begin{Thm}\label{t4} The analogue of Beauville's conjectures \cite{beauville2} and of Murre's conjectures \cite{murre} holds in $\LMot(k)$, which is a Kimura-O'Sullivan category.\footnote{Recall that a $\Q$-linear $\otimes$-category $\sA$ is \emph{Kimura-O'Sullivan} if any object $A\in \sA$ may be written as a direct sum $A_+\oplus A_-$ where $A_+$ (resp. $A_-$) is killed by some exterior (resp. symmetric) power: this is conjecturally the case for the category of Chow motives over any field, and is known for motives of abelian varieties.} The projection $\LMot(k)\to \LMot_\num(k)$ has a canonical symmetric mono\-id\-al section.
\end{Thm}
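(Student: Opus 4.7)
The plan is to transport the Beauville bigrading from $CH^*(A)$ to $L^*(A)$ and to read off every claimed property from it.

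\emph{First,} I would use Theorem~\ref{p1}. It presents $L^*(A)=P_*(A)$ as generated, under the Pontrjagin product, by $CH_0(A)(k)$ and $CH_1^{(0)}(A)$, i.e., by elements lying in specific Beauville bigraded pieces. Combined with Beauville's multiplicativity of his bigrading under both intersection and Pontrjagin products, this forces $L^p(A)\subseteq\bigoplus_{0\le s\le p}CH^p_{(s)}(A)$ --- exactly the support predicted by Beauville's conjectures. Hence the Beauville bigrading restricts to a canonical grading of $L^*(A)$ supported in the conjectural range: this is the analogue of Beauville's conjectures in $\LMot(k)$.

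\emph{Second,} for Murre's conjectures and Kimura-O'Sullivan, I would show that the Chow-Künneth projectors $\pi_i\in CH^g(A\times A)$ of Deninger-Murre-Künnemann actually lie in $L^g(A\times A)$. They are polynomial combinations of the graph correspondences $\Gamma_{[n]}$, which in turn are pullbacks of $\Delta_A$ along $\mathrm{id}\times[n]$; and $\Delta_A\in L^g(A\times A)$ because Fourier-Mukai provides a presentation of $\Delta_A$ (up to sign and $[-1]^*$) as the composite of the correspondences $\exp(\ell_{A\times\hat A})$ and $\exp(\ell_{\hat A\times A})$ (Poincaré divisor classes, manifestly in $L^*$), and composition of correspondences preserves $L^*$ thanks to Theorem~\ref{t2}. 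This provides a Chow-Künneth decomposition $h(A)=\bigoplus h^i(A)$ internal to $\LMot(k)$. Murre's filtration axioms then follow by reading them off the Beauville grading of Step~1, and Kimura-O'Sullivan holds because Kimura's vanishing arguments for high exterior (resp.\ symmetric) powers of $h^{\mathrm{odd}}(A)$ (resp.\ $h^{\mathrm{even}}(A)$) use only projectors that now live in $\LMot(k)$.

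\emph{Third,} for the canonical monoidal section $\LMot_\num(k)\to\LMot(k)$, I would invoke the André-Kahn section theorem: a $\Q$-linear Kimura-O'Sullivan rigid $\otimes$-category whose ideal of $\otimes$-nilpotent morphisms coincides with the kernel to a semisimple abelian quotient admits a canonical symmetric monoidal section from that quotient. The hypotheses hold by Step~2 and Theorem~\ref{t3}(a), which identifies $\sim_\num$ with $\sim_\otimes$ on $L^*$, giving quotient $\LMot_\num(k)$. The main obstacle is the verification in Step~2 that the Deninger-Murre-Künnemann projectors land in $L^*(A\times A)$ rather than merely in $CH^*(A\times A)$; once this is settled, Beauville, Murre, Kimura-O'Sullivan and the section all follow by invoking general principles.
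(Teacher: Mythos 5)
Your proposal misses the two hardest points in the proof, both of which hinge on O'Sullivan's lifting theorem \cite{os}. Beauville's conjectures are two-fold: the vanishing $CH^p_{(s)}=0$ for $s<0$, which your Step~1 does address via the grading, and the \emph{injectivity} $CH^p_{(0)}(A)\hookrightarrow A^p_\num(A)$. For the Lefschetz pieces the paper establishes this injectivity (Lemma~\ref{l3}) precisely by restricting O'Sullivan's ring-theoretic section of $CH^*(A)\to A^*_\num(A)$ to $L^*$; your Step~1 says nothing about it, and it certainly does not follow from the grading alone. Similarly, for the \emph{canonical} symmetric monoidal section of $\LMot(k)\to\LMot_\num(k)$, invoking the André--Kahn section theorem would give a section unique only up to $\otimes$-conjugation, not a canonical one; the paper's canonical section is precisely the one cut out by O'Sullivan's multiplicative splitting (again Lemma~\ref{l3}), whose image is the subcategory $\LMot_0(k)$ with morphisms $L^{\dim A}_0(B\times A)$, compatibly over all abelian varieties and all homomorphisms.

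Two smaller points. For Murre's conjecture~(C), the independence of the filtration from the choice of Chow--Künneth decomposition is not automatic from ``reading off the Beauville grading'': any other choice of projectors is conjugate to the canonical one by a correspondence $1+n$ with $n\sim_\num 0$, and one must verify, as the paper does using Kimura nilpotence and the positivity $n\in L^g_{(>0)}(A\times A)$ coming from the analogues (iii)--(iv) of Beauville's conjectures, that such a conjugation preserves the filtration. Finally, your Step~2 re-derives that the Deninger--Murre--Künnemann projectors lie in $L^*(A\times A)$; this was already built into the very construction of $\LMot(k)$ (Corollary~\ref{c1}, citing Milne), so it is not new content at the stage of Theorem~\ref{t4}, and in any case the Fourier argument you sketch is not needed.
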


As can be expected, the proof of last statement uses O'Sullivan's lifting theorem \cite{os}.

Finally, let us discuss Question \ref{q3} above. Let $k_s$ be a separable closure of $k$; we write $\iota:\LMot_\num(k)\to \LMot_\num(k_s)$ for the ``extension of scalars'' functor (see discussion following Definition \ref{d1} in \S \ref{s4}).  Let $\omega:\LMot_\num(k_s)\to \Vec_K$ be a symmetric monoidal, $\Q$-linear functor, where $\Vec_K$ is the category of finite-dimensional vector spaces over a field $K$ of characteristic $0$: by Theorem \ref{t3}, we may choose $\omega$ coming from a Weil cohomology.  (Here we changed the commutativity constraint as in Theorem \ref{t3} b).) For any abelian $k$-variety $A$, define $\bU(A)$ to be the algebraic $K$-group whose $K$-points are given by
\begin{equation} \label{eq0.4}
\bU(A)(K) = \{(\phi,\lambda)\in C(A)^*\times K^*\mid \phi \rho_A(\phi) = \lambda 1_{\omega^1(A)}\}
\end{equation}
where $C(A)$ is the centraliser of $\End^0(A)\otimes_\Q K$ in $\End_K \omega^1(A)$ and $\rho_A$ is the involution of $C(A)$ induced by some polarisation $u$ of $A$ (it does not depend on the choice of $u$, see Lemma \ref{l6.1}). Here, $\omega^1(A) = \omega(Lh^1_\num(A))$, see Corollary \ref{c1}. Let $\nu_A:\bU(A)\to \G_m$ be the character given by $(\phi,\lambda)\mapsto \lambda$, and $\SU(A)=\Ker \nu_A$.

\begin{Thm}[\protect{\cite[Prop. 1.8]{milne}} when $k$ is algebraically closed]\label{t5}\

a) Let $G$ (resp. $G_s$) be the affine $K$-group representing $\Aut^{\otimes}(\omega\iota)$ (resp.  $\Aut^{\otimes}(\omega)$). Then there is  
an exact sequence
\[1\to G_s \to G\to \Gamma \to 1\]
where $\Gamma=Gal(k_s/k)$, and an exact sequence 
\[1\to \prod_{A\in S} \SU(A)\to G_s\by{\nu}\G_m\to 1\]
where $S$ is the set of isogeny classes of simple abelian $k_s$-varieties, and $\nu$ is induced by the $\nu_A$'s. (The corresponding groups $\SU(A)$ were computed in \cite[Table 2 p. 655]{milne2}.)

b) Let $A$ be an abelian $k$-variety, $\langle A\rangle$ the  Tannakian subcategory of $\LMot_\num(k)$ generated by the Lefschetz motive $Lh_\num(A)$ of $A$, and $G_A=\Aut^{\otimes}(H_{\langle A\rangle})$. Let $E/k$ be the smallest Galois extension such that $\End^0(A_E)=\End^0(A_{\bar k})$. Then there is an exact sequence
\[1\to \bU(A_E) \to G_A\to Gal(E/k) \to 1\]
\end{Thm}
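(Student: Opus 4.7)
My plan is to combine Galois descent for Tannakian categories with an explicit endomorphism computation in the Lefschetz category. The Galois-type quotients in both sequences — $G_s \hookrightarrow G \twoheadrightarrow \Gamma$ in (a) and $\bU(A_E)\hookrightarrow G_A \twoheadrightarrow Gal(E/k)$ in (b) — should follow formally from the fact that $\LMot_\num(k)$ embeds into $\LMot_\num(k_s)$ (resp.\ $\LMot_\num(E)$) as the subcategory fixed by a natural continuous Galois action. Once this descent property is checked — continuity is immediate since every simple object is defined over a finite subextension — the standard exact sequence for a Tannakian covering produces the two short exact sequences, surjectivity onto the Galois quotient coming from fullness of $\iota$ after base change.

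For the remaining content of part (a), reduce to the case of a single simple $A$ by semi-simplicity (Theorem \ref{t3}). Since $\LMot_\num(k_s)$ is $\otimes$-generated by the family $\{Lh^1(A)\}_{A\in S}$ together with $\L$, and since there is no nonzero morphism in $\LMot_\num(k_s)$ between tensor constructions built from non-isogenous simples except through a Tate twist, the Tannakian subgroup generated by all the $Lh^1(A)$'s turns out to be the fibre product of the $\bU(A)$'s over the common character $\nu_A:\bU(A)\to \G_m$ that records the action on $\L$. This is exactly the second exact sequence of (a), and reduces everything to an individual computation for one simple $A$.

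Part (b) after base change to $E$ is thus the core: identify $\Aut^{\otimes}(H_{\langle A_E\rangle})$ with $\bU(A_E)$. Two steps. First, prove $\End_{\LMot_\num(E)}(Lh^1(A_E)) = C(A_E)$ as an algebra: every Lefschetz correspondence commutes with $\End^0(A_E)$ (which acts through correspondences already present in $\LMot$), and conversely the image of $L^*(A_E\times A_E)$ under $\omega$ fills the centralizer — here Theorem \ref{p1} is decisive, allowing one to present Lefschetz classes as Pontryagin products and then match dimensions via the Fourier transform. Second, the polarization induces a morphism $\L \to Lh^2(A_E)$, equivalently an alternating form $\omega^1(A_E)\otimes \omega^1(A_E)\to \omega(\L)$ whose adjoint involution on $C(A_E)$ is exactly $\rho_A$ (independence of the choice of polarization being Lemma \ref{l6.1}). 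By Tannakian reconstruction, $\Aut^{\otimes}(H_{\langle A_E\rangle})$ is the subgroup of $\GL(\omega^1(A_E))\times \G_m$ that commutes with $\End^0(A_E)\otimes K$ and preserves the alternating form up to the $\G_m$-scalar, which is precisely $\bU(A_E)$.

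The main obstacle is the endomorphism identification $\End(Lh^1(A_E)) = C(A_E)$. This is where ``enough'' algebraic cycles must come from divisors, bypassing Milne's case-by-case classification by Dieudonné type, and it is the step that genuinely uses the new input of Theorems \ref{t2} and \ref{p1}. Once this is in hand, everything else is the Tannakian dictionary: the polarization-preservation condition is formal, splitting off the Tate motive yields the sequence of (b), and assembling over $S$ together with Galois descent yields (a).
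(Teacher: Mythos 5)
The proposal sketches the correct overall architecture—Galois descent to reduce to the separably closed (or end-of-CM) case, then compute the Tannakian group there—which is indeed what the paper does via Propositions \ref{p7.2} and \ref{l7.3}. But the core step in your outline contains a genuine error and a genuine gap.

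The error: you assert ``$\End_{\LMot_\num(E)}(Lh^1(A_E)) = C(A_E)$ as an algebra,'' and try to justify this with a dimension count via Theorem \ref{p1}. This cannot be true: $\End_{\LMot_\num(E)}(Lh^1(A_E))$ is the $\Q$-algebra $\End^0(A_E)$ (as in ordinary motives, since all homomorphisms of abelian varieties are given by divisor correspondences), whereas $C(A_E)$ is a $K$-algebra and is its \emph{centraliser} inside $\End_K\omega^1(A_E)$. For $A$ an elliptic curve with no CM, the former is $\Q$ and the latter is $M_2(K)$. The dimension-matching you propose via Fourier transform is not available.

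The gap: once one corrects the statement, the real content is to show that $\bU(A)$ fixes \emph{every} morphism in the $\otimes$-subcategory $\langle Lh^1(A)\rangle$, not merely the commutation with $\End^0(A)$ and the preservation of the polarisation up to scalar. You treat this as an application of ``Tannakian reconstruction,'' but that begs the question: a priori the category could contain extra tensor invariants (as it does for ordinary motives, where the motivic Galois group can be strictly smaller than the naive commutant-unitary group). What makes the Lefschetz case work is that $L^*$ is by definition generated in degree $1$, so that (via Lemma \ref{l5.1}) the surjection of Proposition \ref{p6.2} holds automatically; $\bU(A)$ then fixes all Hom groups because it fixes the degree-$1$ and $2$ pieces and these generate. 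The inclusion $\bU(M)\subseteq H_M$ is then extracted by the rigidity argument of \cite[Prop.\ 3.1(c)]{deligne-900}, using reductivity. Your proposal does not identify either of these two ingredients: the $\Lambda^{2n}$-surjectivity (which is where the ``divisor-generated'' nature of $\LMot$ actually enters) and the Deligne rigidity step. Without them the easy inclusion $H_M\subseteq\bU(M)$ is established, but the opposite inclusion—the substance of the theorem—is unproved.

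A minor further point: the Galois descent claim is correct in spirit but needs the precise formalism of continuous $\Gamma$-actions on $\sA$ and of descent data (Definitions \ref{d7.1}, \ref{d7.2}), together with essential surjectivity of $\LMot_\num(k)\to\sB$ proved via the adjoint $\lambda_E$ of Lemma \ref{l4.1}; ``continuity is immediate'' is close but not a proof.
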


\begin{Ex} Let $A$ be an elliptic curve. If $A$ has no complex multiplication, then $E=k$ and $\bU(A)=\GL_2$. If $A$ has complex multiplication by $F$ in characteristic $0$, then $\bU(A_E) = R_{F/\Q} \G_m$ and $E=kF$. If $A$ is ordinary in charactertistic $p$, then $E=k$, while if it is supersingular,  then $E=k F_0$ if $k_0$ is the field of constants of $E$ and $F_0$ is the smallest (at most quadratic) extension of $k_0$ such that $\End^0(A_{F_0})$ is a quaternion algebra.
\end{Ex}

It may be possible to generalise the present theory of Lefschetz motives to the case of abelian schemes over a base, in the style of Deninger-Murre \cite{den-murre}: this seems likely in view of their results and those of Künnemann \cite{ku} as well as Polishchuk \cite{pol}, and may have an interest in view of Ancona's work \cite{ancona}. I haven't attempted it, however, and leave this problem to the interested reader.

This paper was conceived in 2019, but straightening out the computation of the Tannakian group took two more years, especially in the non-algebraically closed case: this is done in Section \ref{s6} which is certainly the most technical of this paper and occupies almost half of it. I had actually planned to give a computation of Deligne's fundamental group \cite[\S 8]{deligne0}, but decided to give up for now in order not to hold this work forever.

The reader may consult \cite{stacks} for 1) an interpretation of the two-step process of \S \ref{s4} as a ``stackification'' and 2) a simpler and more conceptual replacement of the profinite construction in \S \ref{s5.1}.

\section{Notation}

Let $A$ be an abelian variety of dimension $g$ over a field $k$. All Chow groups are tensored with $\Q$.  If $x\in CH_0(A)_0$, we write
\[\log_*(1-x) = - \sum_{n=1}^g \frac{x^{*n}}{n} \]
where $*$ denotes the Pontrjagin product. We set for $p\ge 0$
\[\gamma^p(x) = \frac{x^p}{p!}, \quad \gamma_*^p(x) = \frac{x^{*p}}{p^!}.\]

Instead of Beauville's notation $CH^p_s(A)$ \cite{beauville2}, we shall use $CH^p_{(s)}(A)$ in order to avoid confusion between $p$ and $s$, as we also use the notation
\[CH_q^{(s)}(A) = CH^{g-q}_{(s)}(A).\]

Recall that
\[CH^p_{(s)}(A)= \{x\in CH^p(A) \mid [k]^* x = k^{2p-s} x\ \forall\ k\in \Z \}\]
where $[k]$ is multiplication by $k$ on $A$.

For any $a\in A(k)$ and any $x\in CH^1(A)$, we write
\[\phi_a(x) = a^*x - x\]
where $a^*$ means ``pull-back by $a$'',  and
\[L(d,a_1,\dots,a_p,q) = \gamma^q(d) \phi_{a_1}(d)\dots \phi_{a_p}(d),\quad  p,q\ge 0, a_i\in A(k)\]
for $d$ an ample symmetric class in $CH^1(A)$. (Recall that an element $x\in CH^*(A)$ is \emph{symmetric} (resp. \emph{antisymmetric}) if $\sigma^*x = x$ (resp. $\sigma^*x = -x$), where $\sigma=[-1]$.)

If $\sim$ is an adequate equivalence relation on algebraic cycles, we write $A^*_\sim(A)$ for algebraic cycles on $A$ modulo $\sim$, and similarly $L^*_\sim(A)$ (so $A^*_\sim(A)= CH^*(A)$ and $L^*_\sim(A)=L^*(A)$ if $\sim$ is rational equivalence). We write $\sM_\sim(k)=\sM_\sim$ for the category of pure motives modulo $\sim$  \cite{scholl}, and simply $\sM(k)=\sM$ if $\sim$ is rational equivalence. We shall need the following formula:
\begin{equation}\label{eqb-1}
CH^p_{(s)}(A)=\sM(\L^p,h^{2p-s}(A))
\end{equation}
where $\L$ is the Lefschetz motive and $h^i(A)$ is the direct summand of the motive $h(A)$ of $A$ defined by the canonical $i$-th Chow-K\"unneth projector of Deninger-Murre \cite[Th. 3.1]{den-murre}: \eqref{eqb-1} is clear since $[k]$ acts on $h^i(A)$ by $k^i$ (loc. cit.).

\section{Technical lemmas}

\begin{lemma}\label{l0} $L^*(A)$ is generated by the $L(d,a_1,\dots,a_p,q)$ (as a $\Q$-vector space).
\end{lemma}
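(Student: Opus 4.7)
The plan is to show that the $\Q$-span $V$ of the $L(d,a_1,\dots,a_p,q)$ equals $L^*(A)$. Decompose $CH^1(A)=\NS(A)_\Q\oplus\Pic^0(A)_\Q$ into the $\pm 1$ eigenspaces of $[-1]^*$. Any ample class $D$ yields an ample symmetric class $D+[-1]^*D$ which represents $2D$ in $\NS(A)_\Q$ (because $[-1]^*$ acts trivially on $\NS(A)_\Q$, as one verifies via $\phi_{[-1]^*L}=\phi_L$), so ample symmetric classes span $\NS(A)_\Q$. For any fixed ample symmetric $d_0$, the homomorphism $\phi_{d_0}\colon A\to A^\vee$ is an isogeny, so by the dual isogeny its image on $k$-points has finite index in $A^\vee(k)=\Pic^0(A)$; hence $\{\phi_a(d_0)\mid a\in A(k)\}$ spans $\Pic^0(A)_\Q$. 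This spanning assertion is the main obstacle in the non algebraically closed case, where one cannot realize a prescribed antisymmetric class as a single $\phi_a(d_0)$ but only as a $\Q$-combination.

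Next, I exploit two linearity properties. By the theorem of the square, $\phi_a(d)$ is $\Z$-linear in $a$, so $L(d,a_1,\dots,a_p,q)$ is $\Z$-linear in each $a_j$ separately. Combining with the preceding spanning result, I take $\Q$-linear combinations of the $L$'s over varying $a_j$ (with $d$ fixed) to place $\gamma^q(d)\,e_1\cdots e_p$ in $V$ for every ample symmetric $d$ and every $e_j\in\Pic^0(A)_\Q$. I then polarize in $d$: given ample symmetric classes $d_1,\dots,d_q$, the class $d=\sum_{j=1}^q t_j d_j$ is ample symmetric for any positive rational $t_j$, and the coefficient of $t_1\cdots t_q$ in the polynomial $\gamma^q(d)\,e_1\cdots e_p$ (viewed as a polynomial in the $t_j$'s with coefficients in $CH^*(A)$) equals $d_1\cdots d_q\cdot e_1\cdots e_p$. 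Lagrange interpolation at enough positive rational specializations then forces this coefficient to lie in $V$.

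Finally, given a monomial $x_1\cdots x_n\in L^*(A)$ with $x_i\in CH^1(A)$, write $x_i=s_i+e_i$ with $s_i\in\NS(A)_\Q$ and $e_i\in\Pic^0(A)_\Q$, expand multilinearly, and rewrite each $s_i$ as a $\Q$-linear combination of ample symmetric classes. A second multilinear expansion reduces the monomial to a $\Q$-combination of products of the form $d_1\cdots d_r\cdot e_1'\cdots e_{n-r}'$ which lie in $V$ by the previous paragraph, completing the proof.
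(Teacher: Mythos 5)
Your proof is correct and uses essentially the same ingredients as the paper's: the eigenspace decomposition of $CH^1(A)_\Q$ under $[-1]^*$ into symmetric and antisymmetric parts, the fact that ample symmetric classes span the symmetric part, the isogeny $\phi_d\colon A\to A^*$ to see that $\{\phi_a(d)\}_{a\in A(k)}$ spans $\Pic^0(A)\otimes\Q$, and a polarization identity relating $\gamma^q(d)$ to products $d_1\cdots d_q$ (you run the polarization in the opposite direction from the paper, from $\gamma^q(d)$ to the $d_i$'s rather than collapsing the $d_i$'s into a single $\gamma^r$, but it is the same identity). One small imprecision: for an isogeny $f\colon A\to B$ with quasi-inverse $g$ satisfying $f\circ g=[n]_B$, the cokernel of $f$ on $k$-points is only $n$-torsion, not necessarily of finite index when $B(k)$ is not finitely generated; but torsion is all you need for it to vanish after $\otimes\Q$, so the conclusion stands.
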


\begin{proof} Any element of $CH^1(A)$ is a sum of a symmetric and an antisymmetric class.  Thus $L^*(A)$ is $\Q$-linearly generated by products of the form
\[d_1\dots d_r x_1\dots x_s\]
with the $d_i$ symmetric and the $x_i$ antisymmetric. Since any symmetric class is a difference of two ample symmetric classes, we may restrict to those products where all $d_i$ are ample. But we may write $d_1\dots d_r$ as a linear combination of elements of the form $\gamma^r(\sum_I d_i)$ where $I$ runs through the subsets of $\{1,\dots,r\}$. Since $\sum_I d_i$ is ample for any such $I$, we see that 
$L^*(A)$ is $\Q$-linearly generated by products of the form
\[\gamma^r(d) x_1\dots x_s\]
for $d$ symmetric ample and the $x_i$ antisymmetric. Recall that the antisymmetric classes constitute $\Pic^0(A)\otimes \Q=A^*(k)\otimes \Q$.

Given such a $d$,  the map $a\mapsto \phi_a(d)$ defines an isogeny $A\to A^*$, hence an isomorphism $A(k)\otimes \Q\iso A^*(k)\otimes \Q$. Therefore the $\phi_a(d)$ generate $\Pic^0(A)\otimes \Q$, whence the conclusion.
\end{proof}

\begin{lemma} \label{l1} $CH_1^{(0)}(A)$ is generated by the $c_d=\gamma^{g-1}(d)$, where $d$ runs through the ample symmetric divisor classes on $A$.
\end{lemma}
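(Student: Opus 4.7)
The inclusion ``$\supseteq$'' is immediate: for $d$ ample symmetric one has $[k]^* d = k^2 d$, so $d \in CH^1_{(0)}(A)$. By multiplicativity of the Beauville weight decomposition (from $[k]^*(xy) = [k]^*x \cdot [k]^*y$), we get $\gamma^{g-1}(d) = d^{g-1}/(g-1)! \in CH^{g-1}_{(0)}(A) = CH_1^{(0)}(A)$. Let $V \subseteq CH_1^{(0)}(A)$ denote the $\Q$-span of all such $c_d$.

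For the reverse inclusion, my plan is to transport the problem to $\hat A$ via Beauville's Fourier transform $\mathcal{F} : CH^*(A)_\Q \iso CH^*(\hat A)_\Q$ of \cite{beauville2}. It is compatible with the weight grading and shifts codimension according to $\mathcal{F}(CH^p_{(s)}(A)) = CH^{g-p+s}_{(s)}(\hat A)$, so in particular it restricts to an isomorphism
$$\mathcal{F} : CH_1^{(0)}(A) = CH^{g-1}_{(0)}(A) \iso CH^1_{(0)}(\hat A) = NS(\hat A)_\Q.$$
It therefore suffices to show $\mathcal{F}(V) = NS(\hat A)_\Q$.

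The key step is the computation of $\mathcal{F}(\gamma^{g-1}(d))$ using Beauville's explicit formulas for the Fourier transform of $e^d$ in \cite{beauville1}. For $d$ ample symmetric with associated isogeny $\phi_d : A \to \hat A$, let $\hat d \in NS(\hat A)_\Q$ denote the dual polarization, characterized in the isogeny category by $\phi_{\hat d} = \phi_d^{-1}$ (equivalently, $\phi_d^* \hat d = d$ up to $\Q^*$). Beauville's formula takes the shape $\mathcal{F}(e^d) = \pm \chi(d)\, e^{-\hat d}$, where $\chi(d)$ is a nonzero integer by ampleness. Extracting the codimension-$1$ component on $\hat A$ on both sides yields
$$\mathcal{F}(\gamma^{g-1}(d)) = c(d) \cdot \hat d, \qquad c(d) \in \Q^*.$$
Hence $\mathcal{F}(V) \supseteq \operatorname{Span}_\Q\{\hat d : d \text{ ample symmetric on } A\}$.

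Biduality $\hat{\hat A} = A$ makes the correspondence $d \leftrightarrow \hat d$ an involutive bijection between ample symmetric classes on $A$ and on $\hat A$. Since every element of $NS(\hat A)_\Q$ is a $\Q$-combination of ample symmetric classes (any class being a difference of two ample ones, as already used in the proof of Lemma \ref{l0}), the above span equals all of $NS(\hat A)_\Q$. We conclude $\mathcal{F}(V) = NS(\hat A)_\Q$, whence $V = CH_1^{(0)}(A)$. The main technical obstacle is correctly extracting the codimension-$1$ component of Beauville's Fourier formula for $e^d$ and checking that its coefficient is nonzero; the latter is automatic from $\chi(d) > 0$.
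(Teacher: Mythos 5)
Your proof is correct, and it takes a genuinely different route from the one in the paper. The paper argues entirely on $A$: Künnemann's hard Lefschetz isomorphism $h^2(A)\otimes \L^{g-2}\iso h^{2g-2}(A)$ (already invoked in Corollary \ref{c1}, and realized by cup-product with $\gamma^{g-2}(d)$) gives an isomorphism $CH^1_{(0)}(A)\iso CH^{g-1}_{(0)}(A)$, so $CH_1^{(0)}(A)$ is spanned by the products $\gamma^{g-2}(d)\cdot d'$ for $d'$ ample symmetric; then the polarization identity --- expand $(d+ad')^{g-1}$ and interpolate in $a\ge 0$ --- rewrites these as combinations of the $c_{d+ad'}$. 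You instead pass to the dual abelian variety $\hat A$ via the Fourier transform $\mathcal{F}:CH^{g-1}_{(0)}(A)\iso CH^1_{(0)}(\hat A)$ and reduce the statement to the fact that $\mathcal{F}(c_d)$ is a nonzero multiple of the dual polarization class $\hat d$; biduality plus the observation that ample symmetric classes span $NS(\hat A)_\Q$ then closes the argument. The trade-off: the paper's inputs are exactly those it has already set up (Künnemann's Lefschetz iso and formula \eqref{eqb-1}), so it is economical within the paper; your input --- the identity $\mathcal{F}(e^d)=\pm\chi(d)\,e^{-\hat d}$, obtained from Mumford's formula $(1\times\phi_d)^*[P_A]=m^*d-p_1^*d-p_2^*d$ after unwinding flat base change and signs --- is a standard Fourier--Mukai computation, but not one the paper otherwise cites, and as you say it needs careful bookkeeping. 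Note also that the formula $\gamma^p(d)=\nu_d\gamma_*^{g-p}(c_d)$ from \cite[Cor. 2]{beauville1} which the paper does cite only gives $\mathcal{F}(e^d)=\nu_d\exp(\mathcal{F}(c_d))$, which by itself is circular for identifying $\mathcal{F}(c_d)$; the extra input really is the Mumford-type formula. Both arguments ultimately rest on Beauville's Fourier theory, and both are sound.
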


\begin{proof} Since $d\in CH^1_{(0)}(A)$, $c_d\in CH_1^{(0)}(A)$. On the other hand,  we have an isomorphism of Chow motives
\[ h^2(A)\otimes \L^{g-2}\iso h^{2g-2}(A)\]
(\cite{ku}, \cite[Th. 5.2 (iii)]{scholl}), whence an isomorphism (see \eqref{eqb-1})
\begin{multline*}
CH^1_{(0)}(A) = \sM(\L,h^2(A))\iso \sM(\L^{g-1},h^2(A)\otimes \L^{g-2})\\
\iso \sM(\L^{g-1},h^{2g-2}(A))= CH^{g-1}_{(0)}(A)
\end{multline*}
given by cup-product with $\gamma^{g-2}(d)$. Therefore $CH^{g-1}_{(0)}(A)$ is generated by elements of the form $\gamma^{g-2}(d)\cdot d'$, where $d'$ is another ample symmetric divisor class. But $\gamma^{g-2}(d)\cdot d'$ may be written as a $\Q$-linear combination of elements of the form $\gamma^{g-1}(d+ad')$ for $a$ an integer $\ge 0$, which concludes the proof.
\end{proof}

\begin{lemma}\label{l2} a) Let $f:A\to B$ be a homomorphism of abelian $k$-varieties. Then we have
\[ f_*x *f_* y=f_*(x*y)\]
for any $x,y\in CH_*(A)$.\\
b) Let $A$ be an abelian $k$-variety, and let $l/k$ be a finite extension. Write $A_l$ for $A\otimes_k l$ and $p:A_l\to A$ for the projection. Then we have
\[x * p_* y=p_*(p^*x*y)\]
for any $(x,y)\in CH_*(A)\times CH_*(A_l)$ (projection formula for the Pontrjagin product).
\end{lemma}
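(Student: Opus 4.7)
The plan is to reduce both parts to the definition $x*y=\mu_*(x\times y)$, where $\mu:A\times A\to A$ denotes the group law, by invoking standard functorial identities: commutation of proper pushforwards, the multiplicativity $(f\times g)_*(x\times y)=f_*x\times g_*y$ of the external product, flat base change, and the projection formula.

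For part a), the fact that $f:A\to B$ is a homomorphism means exactly that $f\circ\mu_A = \mu_B\circ(f\times f)$, so $f_*\mu_{A*}=\mu_{B*}(f\times f)_*$. Applied to $x\times y$ and combined with $(f\times f)_*(x\times y)=f_*x\times f_*y$, this immediately gives
\[f_*(x*y)=f_*\mu_{A*}(x\times y)=\mu_{B*}(f_*x\times f_*y)=f_*x*f_*y.\]

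For part b), the essential observation is the identification of $k$-schemes
\[A\times_k A_l\;=\;A\times_k A\times_k \Spec l\;=\;A_l\times_l A_l,\]
under which $1\times p:A\times_k A_l\to A\times_k A$ becomes the natural projection $q:A_l\times_l A_l\to A\times_k A$, and the equation $\mu\circ(1\times p)=p\circ\mu_l$ holds. The resulting square with top $\mu_l$, bottom $\mu$, right leg $p$, left leg $q$ is Cartesian, since it is simply the base change of $\mu$ along $\Spec l\to\Spec k$. Applying first flat base change $\pi_2^*p_*=(1\times p)_*\pi_2^*$ (in the square with legs $\pi_2$ and $p$) and then the projection formula for $1\times p$ yields
\[x\times p_*y=(1\times p)_*(x\times y)\]
in $CH_*(A\times_k A)$. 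Under the identification above, $x\times y$ on $A\times_k A_l$ coincides with $p^*x\times y$ on $A_l\times_l A_l$: the two first projections to $A$ differ by $p$, while the second projections agree. Pushing forward along $\mu$ and using $\mu\circ(1\times p)=p\circ\mu_l$ then delivers
\[x*p_*y=\mu_*(1\times p)_*(x\times y)=p_*\mu_{l*}(p^*x\times y)=p_*(p^*x*y).\]

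The only non-formal point is the bookkeeping in part b): keeping careful track of fibre products over $k$ versus over $l$ and checking the compatibilities of the various projections. Once these identifications are in place, everything reduces to the standard intersection-theoretic formalism.
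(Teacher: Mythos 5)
Your proof is correct and follows essentially the same route as the paper: both parts reduce to the definition $x*y=\mu_*(x\times y)$ together with the identity $\mu_B\circ(f\times f)=f\circ\mu_A$ (resp.\ the base-change square $\mu\circ\tilde p = p\circ\mu_l$ for $\tilde p:A_l\times_l A_l\to A\times_k A$) and the multiplicativity $(u\times v)_*(a\times b)=u_*a\times v_*b$. The only cosmetic difference is in part b), where you derive $(1\times p)_*(x\times y)=x\times p_*y$ via flat base change plus the projection formula rather than quoting the product formula for pushforwards directly, and you spell out the identification $x\times_k y \leftrightarrow p^*x\times_l y$ which the paper leaves implicit in the single line $\tilde p_*(p^*x\times_l y)=x\times p_*y$.
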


\begin{proof} a) We use the following fact: if $X,Y$ are $k$-varieties, write $\times: CH_*(X)\times CH_*(Y)\to CH_*(X\times_k Y)$ for the cross-product. Let $X\by{u} X'$, $Y\by{v} Y'$ be two proper morphisms of $k$-varieties. Then we have
\[(u\times u)_*(a\times b) = u_*a\times v_*b\]
for any $(a,b)\in CH_*(X)\times CH_*(Y)$. For $u=v=f$, this gives
\[f_*x *f_* y = \mu^A_*(f_*x\times f_*x) = \mu^A_*(f\times f)_*(x\times y) = f_* \mu^B_*(x\times y) = f_*(x*y)\]
where $\mu^A$ and $\mu^B$ are the multiplication maps of $A$ and $B$.

b) Let $\mu$ be the multiplication map of $A$, and let $\pi:\Spec l\to \Spec k$ be the projection. Then the multiplication map $\mu^l$ of $A_l$ is $\mu\times 1_{\Spec l} $ modulo the identification $\alpha:A_l \times_l A_l \iso A\times_k A\times_k \Spec l$, while $p=1_A\times \pi$. 
Write $\tilde p$ for the projection $(1_{A\times_k A}\times \pi)\circ \alpha:A_l\times_l A_l\to A \times_k A$. Then
\[ \tilde p_*(p^*x\times_l y) = x\times p_*y\]
hence
\[
x*p_*y = \mu_* \tilde p_*(p^*x\times_l y)=p_* \mu^l_*(p^*x\times_l y) = p_*(p_*x*y)
\]
as desired.
\end{proof}

\begin{lemma}\label{l3} Let $A$ be an abelian $k$-variety. Write $L^p_{(s)}(A)=L^p(A)\cap CH^p_{(s)}(A)$. Then the homomorphism
\[\rho:L^p_{(0)}(A)\to L^p_\num(A) \]
induced by $L^p(A)\to L^p_\num(A)$ is bijective for any $p\ge 0$.
\end{lemma}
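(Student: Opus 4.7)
My plan is to handle surjectivity and injectivity of $\rho$ separately, using the Beauville decomposition throughout.

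For \textbf{surjectivity}, I would first observe that $L^\ast(A)$ inherits the Beauville decomposition, $L^p(A) = \bigoplus_s L^p_{(s)}(A)$, because $L^\ast$ is generated by $CH^1$ (which is $[k]^\ast$-stable) and the Beauville projectors are polynomial in the $[k]^\ast$'s. The key claim is that $CH^p_{(s),\num}(A) = 0$ for all $s > 0$: the intersection pairing $CH^p_\num(A) \times CH^{g-p}_\num(A) \to \Q$ is non-degenerate and respects the Beauville grading, and on the block $(s, t)$ the adjunction $\langle [k]^\ast x, y\rangle = \langle x, [k]_\ast y\rangle$ combined with $[k]^\ast [k]_\ast = k^{2g}$ yields $k^{2p-s}\langle x, y\rangle = k^{2p+t}\langle x, y\rangle$, forcing $s+t=0$; since both indices are $\geq 0$, only $s=t=0$ contributes, and non-degeneracy then kills the rest. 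Hence $L^p(A) \to L^p_\num(A)$ annihilates $L^p_{(s)}(A)$ for $s > 0$, so $\rho$ is surjective.

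For \textbf{injectivity}, suppose $x \in L^p_{(0)}(A)$ satisfies $x \sim_\num 0$; I must show $x = 0$. The extremal cases $p \in \{0, 1, g-1, g\}$ are immediate: for $p = 1$, $L^1_{(0)}(A) = \NS(A) \otimes \Q = L^1_\num(A)$ (as $\NS(A) \otimes \Q$ has no numerical-torsion), and for $p = g-1$, Lemma \ref{l1} gives $L^{g-1}_{(0)}(A) = CH^{g-1}_{(0)}(A)$, which is $\cong \NS(A)\otimes\Q$ via the Chow-motivic hard Lefschetz iso $h^2(A) \otimes \L^{g-2} \iso h^{2g-2}(A)$ (\cite{ku}, \cite[Th.~5.2 (iii)]{scholl})—an iso that commutes with the projection to numerical equivalence. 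For intermediate $1 < p < g-1$, I would use the general K\"unnemann hard Lefschetz isomorphism $h^{2p}(A) \otimes \L^{g-2p} \iso h^{2g-2p}(A)$, which via \eqref{eqb-1} yields a Chow-level isomorphism $\lambda_d: CH^p_{(0)}(A) \iso CH^{g-p}_{(0)}(A)$ (cup with $\gamma^{g-2p}(d)$) carrying $L^p_{(0)}(A)$ into $L^{g-p}_{(0)}(A)$ and commuting with numerical equivalence. Since $\lambda_d$ only swaps $p$ with $g-p$, I would also invoke the Fourier transform $\sF_A: CH^\ast(A) \iso CH^\ast(A^\ast)$: under $\sF_A$, the intersection-generated subring $L^\ast_{(0)}(A)$ maps to the Pontryagin-generated subring of $CH^\ast(A^\ast)$ generated by $\sF_A(CH^1_{(0)}(A)) = CH^{g-1}_{(0)}(A^\ast)$, sitting inside $P_\ast(A^\ast) = L^\ast(A^\ast)$ (Theorem \ref{p1} applied to $A^\ast$). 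Numerical equivalence being preserved, the vanishing of $x$ reduces, via Lemma \ref{l2} and Lemma \ref{l1} applied to $A^\ast$, to divisor-level statements where $\NS(A^\ast)\otimes\Q$ is numerical-torsion free.

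\textbf{Main obstacle.} The hard step is this last reduction: one must check that Pontryagin-polynomial relations modulo numerical equivalence among elements of $CH_1^{(0)}(A^\ast) = CH^{g-1}_{(0)}(A^\ast)$ coincide with their Chow-level relations. This amounts to a rigidity of the Beauville $(0)$-part of $L^\ast$ under Pontryagin products, and requires careful bookkeeping of Beauville weights through the Fourier-Pontryagin correspondence.
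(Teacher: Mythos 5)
The injectivity half of your argument has a genuine gap, and you say so yourself in your ``Main obstacle'' paragraph. The Fourier-transform reduction does not close: carrying $L^*_{(0)}(A)$ over to a Pontrjagin-generated subring of $CH^*(A^*)$ by $\sF_A$ merely translates the same question, because $\sF_A$ is an isomorphism compatible with numerical equivalence and therefore cannot supply the missing information that Chow-level relations in $L^*_{(0)}(A)$ are detected numerically. The same goes for Künnemann's hard Lefschetz isomorphism: it swaps $p$ with $g-p$ but gives nothing new for the intermediate range. What you are actually missing is a lifting device, and this is exactly what the paper uses: O'Sullivan's theorem \cite[Th.~6.1.1]{os} produces a \emph{ring-theoretic} section $\sigma$ of $CH^*(A)\to A^*_\num(A)$ that sends $A^1_\num(A)$ into $CH^1_{(0)}(A)$. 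Since $L^*_\num(A)$ is multiplicatively generated by $A^1_\num(A)$ and $L^*_{(0)}(A)$ is multiplicatively generated by $CH^1_{(0)}(A)$, $\sigma$ restricts to a surjective section of $\rho$, and $\rho$ is then automatically bijective. This is a hard external input, not something one can replace by formal Fourier or Lefschetz manipulations.

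A secondary remark on your surjectivity step: the claim that $CH^p_{(s),\num}(A)=0$ for $s>0$ is correct, but your pairing argument is circular as written, because when $s>0$ the complementary block is $CH^{g-p}_{(-s),\num}$ with negative index, and you cannot simply assert ``both indices are $\geq 0$'' without invoking Beauville's conjecture (ii) for numerical equivalence, which is precisely what you would be in the middle of proving. The clean argument is that the cycle class map kills $CH^p_{(s)}(A)$ for $s\ne 0$ (since $[k]^*$ acts by $k^{2p}$ on $H^{2p}$), hence these classes are homologically and a fortiori numerically trivial. In any case, for surjectivity of $\rho$ you do not even need this vanishing: since $CH^1_{(0)}(A)\to A^1_\num(A)$ is surjective and $L^p_\num(A)$ is spanned by products of degree-$1$ classes, each such product lifts to a product of weight-$0$ representatives, which lies in $L^p_{(0)}(A)$.
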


\begin{proof} Since $CH^1_{(0)}(A)=L^1_{(0)}(A)$  generates $L^*_{(0)}(A)$ multiplicatively and since $CH^1_{(0)}(A)\to A^1_\num(A)$ is surjective, $\rho$ is surjective. On the other hand, O'Sullivan \cite[Th. 6.1.1]{os} has constructed a ring-theoretic section of the homomorphism $CH^*(A)\to A^*_\num(A)$ which  sends  $A^1_\num(A)$ into $CH^1_{(0)}(A)$, hence restricts to a ring-theoretic section $\sigma$ of $\rho$. But $CH^1_{(0)}(A)\to A^1_\num(A)$ is even an isomorphism, hence $\sigma$ is surjective.  
\end{proof}

\section{Proofs of Theorems \ref{t2} and \ref{p1}}

\begin{proof}[Proof of Theorem \ref{p1}] It suffices to show:
\begin{enumerate}
\item[(A)]  $CH_0(A)(k)\subset L^*(A)$ and $CH_1^{(0)}(A) \subset L^*(A)$.
\item[(B)] $L^*(A)$ is stable under $*$.
\item[(C)] $L^*(A)\subseteq P_*(A)$.
\end{enumerate}

For this, we use the computations of Beauville in \cite{beauville1}, namely: 
\begin{equation}\label{eqb2}
\gamma^p(d) = \nu_d \gamma_*^{g-p}(c_d), \quad 0\le p\le g
\end{equation}
with $\nu_d = h^0(d)$ \cite[p 249, Cor. 2]{beauville1}, and
\begin{equation}\label{eqb1}
L(d,a_1,\dots,a_p,q)= (-1)^p\gamma^{p+q}(d)*\log_*[a_1]*\dots *\log_*[a_p]
\end{equation}
\cite[p. 250, Prop. 6]{beauville1}.

(These  computations are made over $k=\C$, but they are valid over any base field.)
Putting \eqref{eqb2} and \eqref{eqb1} together, we get
\begin{equation}\label{eqb3}
L(d,a_1,\dots,a_p,q)= (-1)^p\nu_d \gamma_*^{g-p-q}(c_d)*\log_*[a_1]*\dots *\log_*[a_p].
\end{equation}

In (A), for the first statement it suffices to show that $[a]\in L^*(A)$ for any $a\in A(k)$. By \eqref{eqb2} for $p=g$, we have $\gamma^g(d)=\nu_d [0]$, hence
\[\gamma^g(a^*d)=\nu_d [a].\]

The second statement of (A) follows immediately from Lemma \ref{l1}.

In view of Lemma \ref{l0}, (B) follows immediately from \eqref{eqb3}. Similarly, for (C) it suffices by Lemma \ref{l0} to show that all $L(d,a_1,\dots,a_p,q)$ belong to $P_*(A)$, which follows again from \eqref{eqb3}. 
\end{proof}

\begin{proof}[Proof of Theorem \ref{t2}] In view of Theorem \ref{p1}, it suffices to show that $f_*P_*(A)\subseteq P_*(B)$. Clearly, $f_*CH_0(A)(k)\subseteq CH_0(B)(k)$ and $f_*CH_1^{(0)}(A)\allowbreak\subseteq CH_1^{(0)}(B)$ by \cite[Prop. 2 c)]{beauville2}. We conclude with Lemma \ref{l2} a).
\end{proof}

\section{Categories of Lefschetz motives; proofs of Theorems \ref{t3} and \ref{t4}}\label{s4}

\subsection{A crude category} As a consequence of Theorem \ref{t2}, we have:

\begin{cor}\label{c1} For any adequate equivalence relation $\sim$ on algebraic cycles, there exists a pseudo-abelian rigid $\Q$-linear $\otimes$-category $\LMot_\sim(k)_0$, provided with two faithful symmetric monoidal functors 
\begin{equation}\label{eqb0}
\Ab(k)^\op\by{Lh_\sim}\LMot_\sim(k)_0\to \sM_\sim(k)
\end{equation}
where $\Ab(k)$ is the category of abelian $k$-varieties and $k$-morphisms, such that, for any $A,B\in \Ab(k)$,
\[\LMot_\sim(k)_0(Lh(A),Lh(B))=L^{\dim A}_\sim(B\times A)\]
where $L^*_\sim(A)$ is the image of $L^*(A)$ in $CH^*_\sim(A)$. The Chow-Künneth decomposition of Deninger-Murre and the Lefschetz isomorphisms of Künnemann \cite{ku} hold in $\LMot_\sim(k)_0$ (notation: $Lh^i_\sim(A)$). In particular, we have $Lh^i(A)=S^i(Lh^1(A))$ for any abelian variety $A$ and any $i\ge 0$, as well as a canonical isomorphism
\begin{equation}\label{eq5.1}
Lh^1(A^*)^\vee\simeq Lh^1(A)\otimes \L^{-1}
\end{equation}
defined by the Poincaré bundle class $P_A\in CH^1(A^*\times A)$, where $A^*$ is the dual abelian variety to $A$ and $()^\vee$ denotes duality in $\LMot_\sim(k)_0$. Moreover, if $f\in \Hom^0(A,B)$, we have $Lh^1(f)^\vee=Lh^1(f^*)$ modulo \eqref{eq5.1}, where $f^*$ is the dual homomorphism to $f$.
\end{cor}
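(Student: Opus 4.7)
The plan is to realise $\LMot_\sim(k)_0$ as the pseudo-abelian envelope of the (non-full) subcategory $\sC$ of $\sM_\sim(k)$ whose objects are Tate twists $h(A)(n)$ of motives of abelian $k$-varieties and whose morphism spaces are restricted from $A^*_\sim$ to $L^*_\sim$. Three closure properties will suffice: (i) the external product of $L^*$-classes lies in $L^*$; (ii) pullback along a projection $X\times Y\to X$ of abelian varieties sends divisor classes to divisor classes and so preserves $L^*$; (iii) pushforward along such a projection, which is a group homomorphism of abelian varieties, preserves $L^*$ by Theorem \ref{t2}. Combining (i)--(iii), correspondences with $L^*$ coefficients compose within $L^*$. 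The identity $\Delta_A$ of $h(A)$ equals the pushforward $(1_A,1_A)_*[A]$ and so lies in $L^{\dim A}(A\times A)$ by Theorem \ref{t2} applied to the homomorphism $(1_A,1_A): A\to A\times A$; exactly the same argument gives the graph $\Gamma_f=(1_A,f)_*[A]\in L^{\dim B}(A\times B)$ of any homomorphism $f:A\to B$, which defines the functor $Lh_\sim:\Ab(k)^\op\to\LMot_\sim(k)_0$. The symmetry and associativity constraints are graphs of isomorphisms of abelian varieties, hence in $L^*$. Taking the pseudo-abelian envelope then yields $\LMot_\sim(k)_0$ together with its faithful forgetful functor to $\sM_\sim(k)$.

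For rigidity, the standard duality data in $\sM_\sim(k)$ exhibiting $h(A)^\vee=h(A)\otimes\L^{-\dim A}$ are given by the diagonal class, which belongs to $L^*$ by the computation above; hence $\LMot_\sim(k)_0$ inherits rigidity. For the Chow-K\"unneth decomposition, the projectors $\pi_i^A$ of Deninger-Murre \cite{den-murre} are explicit polynomials in divisor classes on $A\times A$ (built from the Poincar\'e bundle), so $\pi_i^A\in L^{\dim A}(A\times A)$; they realise the summands $Lh^i_\sim(A)$. K\"unnemann's Lefschetz isomorphisms \cite{ku} are given by cup-product with an ample divisor class, and so likewise lie in $L^*$. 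Finally, the classical identification $h^i(A)=S^i(h^1(A))$ is implemented by cycles that visibly remain in $L^*$, and so descends to give $Lh^i(A)=S^i(Lh^1(A))$.

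For the Poincar\'e bundle isomorphism \eqref{eq5.1}, the class $P_A\in CH^1(A^*\times A)\subseteq L^1(A^*\times A)$ defines a morphism $Lh^1(A^*)\otimes Lh^1(A)\to\L$ in $\LMot_\sim(k)_0$; the classical non-degeneracy of the Poincar\'e pairing (valid already in $\sM_\sim(k)$, hence transported to $\LMot_\sim(k)_0$ via the faithful forgetful functor) yields \eqref{eq5.1}. The compatibility $Lh^1(f)^\vee=Lh^1(f^*)$ modulo \eqref{eq5.1} then unwinds to the identity $(f^*\times 1_A)^*P_A=(1_{B^*}\times f)^*P_B$ in $CH^1(B^*\times A)$, which is precisely the defining property of the dual homomorphism $f^*:B^*\to A^*$.

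The substantive content, and the main technical hurdle, is not any single step but the observation that \emph{every} classical piece of categorical data on motives of abelian varieties (duality units, Chow-K\"unneth projectors, Lefschetz operators, symmetric power identifications, Poincar\'e pairing) is already realised by cycles in $L^*$: this is what the Deninger-Murre/K\"unnemann technology provides in explicit divisor-class form. Theorem \ref{t2} then guarantees that such cycles compose to give cycles of the same kind, which is what turns $\sC$ into a category in the first place.
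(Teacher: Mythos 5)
Your argument tracks the paper's own proof quite closely in spirit — Theorem~\ref{t2} gives stability of $L^*$ under composition of correspondences, and the explicit divisor‑theoretic nature of the Deninger--Murre projectors and K\"unnemann's Lefschetz data gives the second claim — but there is one genuine gap. The corollary asserts a functor $Lh_\sim$ on all of $\Ab(k)$, which is the category of abelian $k$-varieties and \emph{arbitrary} $k$-morphisms of varieties, not just homomorphisms of algebraic groups. You prove that $[\gamma_f]\in L^*$ only when $f$ is a homomorphism, via $\gamma_f=(1_A,f)_*[A]$ and Theorem~\ref{t2}. A general $k$-morphism of abelian varieties factors as a homomorphism followed by a translation $t_a$, and translations are \emph{not} homomorphisms, so your argument does not cover them. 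The paper handles this by writing $t_a$ as the composite $A\by{1_A\times a}A\times A\by{\mu}A$ where $\mu$ is the group law; then $\mu$ is a homomorphism (so its graph is in $L^*$ by the above), and the graph of $a:\Spec k\to A$ is the class $[a]\in CH_0(A)$, which lies in $L^*(A)$ by Theorem~\ref{p1} (this is exactly the inclusion $CH_0(A)(k)\subset L^*(A)$). This use of Theorem~\ref{p1} is essential and is missing from your argument.

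A smaller point: to obtain the \emph{isomorphism} \eqref{eq5.1} in $\LMot_\sim(k)_0$, it is not enough that the Poincar\'e class gives an $L^*$-morphism which is invertible in $\sM_\sim(k)$; faithfulness of the forgetful functor does not by itself put the inverse in $L^*$. One needs to know that the inverse correspondence also belongs to $L^*$ (it does — it is again built out of divisor classes, essentially from the Poincar\'e bundle of $A^*$ combined with K\"unnemann's Lefschetz inverses, as Milne observes in the reference the paper cites). Your phrase ``transported via the faithful forgetful functor'' is therefore not a valid justification as written. For the same reason, when you invoke K\"unnemann's Lefschetz isomorphisms you should also note, as the paper does, that their \emph{inverses} lie in $L^*$; this is the nontrivial part of that citation.
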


\begin{rk}\label{r4.1} Here $S^i$ denotes the $i$th symmetric power for the symmetric monoidal structure where we don't modify the commutativity constraint; it is transformed into an exterior power by any fibre functor.
\end{rk}

\begin{proof} By Theorem \ref{t2}, composition of correspondences respects the subgroups $L^*_\sim(B\times A)\subseteq CH^*_\sim(B\times A)$. Therefore, there exists a $\Q$-linear additive category $\LCorr_\sim(k)_0$ whose objects are of the form $\coprod_{i\in I} A_i$, with $I$ finite and $A_i$ an abelian $k$-variety, and morphisms are
\[\LCorr_\sim(k)_0(\coprod_{i\in I} A_i,\coprod_{j\in J} B_j)=\prod_{(i,j)\in I\times J}L^{\dim B_j}_\sim(A_i\times B_j). \]

The existence of the functor $Lh_\sim:\Ab(k)^\op\to \LCorr_\sim(k)_0$ amounts to saying that the transpose of the class $[\gamma_f]$ of the graph $\gamma_f$ a morphism $f:A\to B$ belongs to $L^{\dim B}_\sim(B\times A)$. Note that this condition is preserved under direct products of morphisms.

When $f$ is a homomorphism, this follows from Theorem \ref{t2} since $[\gamma_f]=(\gamma_f)_*([A])$. (See \cite[1.3]{milne} or \cite[Th. 5.10]{milne2} for a different proof.) In general, $f$ is the composition of a homomorphism and a translation, so it remains to handle the latter case. But if $a\in A(k)$, the translation defined by $a$ may be written as a composition
\[A\times \Spec k\by{1_A\times a}A\times A\by{\mu} A \]
where we identify $a$ with the corresponding morphism $\Spec k \to A$ and $\mu$ is as usual the multiplication of $A$. Since $\mu$ is a homomorphism and the graph $[a]$ of $a$ belongs to $L^*(A)$  (Theorem \ref{p1}), we are done.

As usual, we define $\LMot_\sim^\eff(k)_0$ as the Karoubian envelope of $\LCorr_\sim(k)_0$. As observed in \cite[p. 672]{milne2}, the Chow-K\"unneth projectors of $A$ belong to $L^*(A\times A)$ as well as the Lefschetz isomorphisms of \cite{ku} and their inverses, which justifies the second claim already in $\LMot_\sim^\eff(k)_0$. Note in particular that  $\L\in \LMot^\eff_\sim(k)_0$ by using an elliptic curve (which makes it unnecessary to involve projective spaces in its definition as in \cite{milne}), so that we can define  $\LMot_\sim(k)_0$ by $\otimes$-inverting $\L$. Its rigidity is checked on additive generators $Lh_\sim(A)$ in the usual way, using the diagonal of $A\times A$ to define unit and counit, and \eqref{eq5.1} is also checked as usual.
 \end{proof}

We shall need the following lemma for the proof of Theorem \ref{t5}.

\begin{lemma}\label{l5.1} Let $A\in \Ab(k)$ and $n_1,\dots,n_r$ be natural integers. Then the composition
\begin{multline*} Lh^{n_1}(A)\otimes \dots\otimes Lh^{n_r}(A)\inj Lh(A)\otimes \dots \otimes Lh(A)\simeq Lh(A^r)\\
\by{Lh(\Delta)} Lh(A) \surj Lh^{n_1+\dots + n_r}(A)
\end{multline*}
equals the natural morphism obtained from the isomorphisms $Lh^{n_i}(A)\simeq S^{n_i}(Lh^1(A))$ of Corollary \ref{c1}. Here $\Delta$ is the diagonal embedding $A\inj A^r$.
\end{lemma}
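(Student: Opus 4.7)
The lemma asserts that the isomorphisms $Lh^n(A)\simeq S^n(Lh^1(A))$ from Corollary \ref{c1} are compatible with multiplication: under them, the cup product on $Lh^*(A)$ induced by $\Delta^*$ should match the standard symmetric algebra multiplication on $S^*(Lh^1(A))$.

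First, I would reduce to the binary case $r=2$ by induction. The iterated diagonal factors as $\Delta^{(r)}=(\Delta^{(r-1)}\times 1_A)\circ\Delta$, and both the cup product on $Lh^*(A)$ and the symmetric algebra multiplication are associative, so the result for $r$ follows from the result for $r-1$ together with the case $r=2$.

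For $r=2$, setting $V:=Lh^1(A)$, I would observe that $\Delta^*$ endows $Lh(A)=\bigoplus_i Lh^i(A)$ with the structure of a commutative graded algebra object in $\LMot_\sim(k)_0$: associativity follows from $(\Delta\times 1_A)\circ\Delta=(1_A\times\Delta)\circ\Delta$, and commutativity (in the unmodified constraint) from $\tau\circ\Delta=\Delta$, with $\tau$ the switch on $A\times A$. By the universal property of the symmetric algebra in a pseudo-abelian $\Q$-linear symmetric monoidal category, the inclusion of $V$ as a direct summand of $Lh(A)$ extends uniquely to a morphism of commutative graded algebras $\Phi:S^*(V)\to Lh(A)$. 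By construction, $\Phi|_{S^n V}$ equals the composition
\[S^n(V)\inj V^{\otimes n}\inj Lh(A^n)\by{Lh(\Delta^{(n)})}Lh(A)\surj Lh^n(A),\]
which is the composition of the lemma with $r=n$ and all $n_i=1$. For general $(n_1,n_2)$ with $r=2$, the lemma's composition equals $\Phi(\alpha)\cdot\Phi(\beta)=\Phi(\alpha\cdot\beta)$ for $(\alpha,\beta)\in S^{n_1}V\times S^{n_2}V$, since $\Phi$ is a ring map; this is precisely $\Phi$ applied to the symmetric algebra product.

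The hard part will be identifying $\Phi|_{S^n V}$ with the isomorphism $\psi_n:S^n(V)\iso Lh^n(A)$ from Corollary \ref{c1}. This requires tracing through Künnemann's construction in \cite{ku}: his isomorphism is built from iterated cup product restricted to symmetric tensors, which matches $\Phi|_{S^n V}$ up to the factorial normalization of the symmetrizer. Once this identification is in hand, $\Phi$ is the desired graded algebra isomorphism and the lemma follows. The main technical difficulty is keeping track of factorial coefficients cleanly; conceptually, the rest of the argument is formal once Künnemann's theorem is correctly invoked.
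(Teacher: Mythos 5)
Your strategy—build the ring map $\Phi:S^*(V)\to Lh(A)$ via the universal property of the symmetric algebra and then compare it with the isomorphism $\psi_n$ of Corollary \ref{c1}—identifies the relevant algebraic structure, but it leaves a genuine gap exactly where you flag "the hard part": you never show $\Phi|_{S^nV}=\psi_n$. This step is not cosmetic. The most natural way to force the two isomorphisms to agree would be to know that $\psi$ is already a map of graded algebras, but that is precisely what the lemma asserts, so that route is circular; verifying it instead by unwinding Künnemann's construction and tracking symmetrizer normalizations is possible but heavy, and it is exactly the work the lemma is supposed to make unnecessary. Also, the reduction to $r=2$ by induction is harmless but superfluous.

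The paper's proof is a one-line observation that sidesteps all of this. The isomorphism $Lh^i(A)\simeq S^i(Lh^1(A))$ of Corollary \ref{c1} is natural in $A$ for homomorphisms of abelian varieties (this naturality is part of the Deninger--Murre and Künnemann packages invoked there), so $Lh(\Delta)$ is automatically $S^*$ applied to $Lh^1(\Delta)$. The only thing to check is then that $Lh^1(\Delta):\bigoplus_r Lh^1(A)\simeq Lh^1(A^r)\to Lh^1(A)$ is the fold (sum) map—clear, since pullback of a divisor class along the diagonal sums the components—and $S^*$ of the fold map is, by definition, multiplication in the symmetric algebra. No universal property, no factorial bookkeeping, and no risk of circularity. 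You would do well to replace your argument by this functoriality observation, which is also closer in spirit to how Corollary \ref{c1} is actually used elsewhere in the paper.
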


\begin{proof} This is clear, since the morphism $\bigoplus_r Lh^1(A)\simeq Lh^1(A^r)\by{Lh^1(\Delta)} Lh^1(A)$ is the sum map.
\end{proof}

\subsection{The correct construction} When $k$ is not separably closed, the category $\LMot_\sim(k)_0$ is not large enough: it is ``without Artin motives'' (all abelian $k$-varieties are geometrically connected) and, perhaps more importantly, it doesn't cover enough abelian varieties and does not satisfy ``descent''. As a related issue, let $l/k$ be a finite extension.  There is a transfer map $t_{l/k}:CH^*(A_l)\to CH^*(A)$, but clearly $t_{l/k}(L^*(A_l))\not\subset L^*(A)$ since $t_{l/k}CH_0(A)(l)\not \subset CH_0(A)(k)$ if $l\ne k$. As a consequence, the extension of scalars functor $\LMot_\sim(k)_0\to \LMot_\sim(l)_0$ does not have a (left or right) adjoint if $l\ne k$. For these reasons, we enlarge $\LMot_\sim(k)_0$ as follows.

\begin{defn}\label{d1} Write $\Abs(k)$ for the category of abelian schemes over étale $k$-schemes. Let $\sim$ be an adequate equivalence relation. The category $\LCorr_\sim(k)$ has
\begin{description}
\item[objects] those of $\Abs(k)$.
\item[morphisms] Let $A,B$ be two such abelian schemes. Write $A\otimes_k k_s=\coprod_i A_i$ and $B\otimes_k k_s= \coprod_j B_j$, where $k_s$ is a separable closure of $k$ and $A_i, B_j$ are abelian $k_s$-varieties.  Then
\begin{equation}\label{eq5.4}
\LCorr_\sim(k)(A,B) = \left(\bigoplus_{i,j} L_\sim^{\dim A_i}(B_j\times_{k_s} A_i)\right)^\Gamma
\end{equation}
where $\Gamma=Gal(k_s/k)$.
\item[composition of morphisms] induced by that in $\LCorr_\sim(k_s)_0=\LCorr_\sim(k_s)$.
\end{description}
It has a tensor structure given by the product (over $k$).\\
We write $\LMot_\sim(k)$ for the rigid pseudo-abelian $\otimes$-category obtained out of $\LCorr_\sim(k)$ by the usual Grothendieck procedure. If $\sim$ is rational equivalence, we simply write $\LCorr(k)$ and $\LMot(k)$. 
\end{defn}

Let $E/k$ be a separable extension. If $A$ is an object of $\LCorr_\sim(k)$, then $A_E\otimes_E k_s=A_{k_s}$, whence a canonical $\otimes$-functor $\LMot_\sim(k)\to \LMot_\sim(E)$ sending $Lh(A)$ to $Lh(A_E)$. In particular, there is a canonical $\otimes$-action of $\Gamma$ on $\LMot_\sim(k_s)$\footnote{For $A\by{p} \Spec k_s\in \Ab(k_s)$ and $\sigma\in \Gamma$, define $\sigma_*A$ by $\sigma\circ p$.} such that $\sigma M_{k_s} = M_{k_s}$ for any $M\in \LMot_\sim(k)$ and $\sigma\in \Gamma$; from Definition \ref{d1}, we have
\begin{equation}\label{eq4.1}
\LMot_\sim(k)(M,N)\iso \LMot_\sim(k_s)(M_{k_s},N_{k_s})^\Gamma
\end{equation}
for any $M,N\in \LMot_\sim(k)$.

Note that $\LMot_\sim(k)_0$ enjoys the same functoriality in $k$, but the analogue of \eqref{eq4.1} is false.

\begin{lemma}\label{l4.1} Let $E/k$ be a finite separable extension. Then the extension of scalars functor $i_E:\LMot_\sim(k)\to \LMot_\sim(E)$ has a (non monoidal) right adjoint $\lambda_E$, sending a motive $Lh_\sim(A)$ to $Lh_\sim(A_{(k)})$ where $A_{(k)}\in \Abs(k)$ is the naïve restriction of scalars of $A\in \Abs(E)$. If $E/k$ is Galois, one has a natural isomorphism
\begin{equation}\label{eq5.6}
 i_E\lambda_E M\iso \bigoplus_{g\in G} g_*M
\end{equation}
for any $M\in \LMot_\sim(E)$, where $G=Gal(E/k)$.
\end{lemma}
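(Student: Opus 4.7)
The plan is to construct $\lambda_E$ directly on objects via the naïve restriction of scalars, $\lambda_E Lh_\sim(A)=Lh_\sim(A_{(k)})$, then verify the adjunction isomorphism on Hom groups; naturality and Yoneda will then produce $\lambda_E$ as a functor on $\LCorr_\sim(E)$, and the extension to the pseudo-abelian envelope $\LMot_\sim(E)$ is automatic.

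The heart of the argument is a component-level calculation. Fix a reference embedding $\sigma_0\colon E\hookrightarrow k_s$. For $A\in \Abs(E)$ with $A\otimes_E k_s=\coprod_i A_i$, the canonical isomorphism $E\otimes_k k_s\simeq \prod_{\sigma\in \Hom_k(E,k_s)} k_s$ yields
$$A_{(k)}\otimes_k k_s\simeq \coprod_{\sigma}\coprod_i A_i^{\sigma},$$
where $A_i^{\sigma_0}=A_i$, the group $\Gamma=\mathrm{Gal}(k_s/k)$ permutes the outer $\sigma$-blocks transitively with stabilizer $\Gamma_E=\mathrm{Gal}(k_s/E)$, and $\Gamma_E$ acts on the block $\sigma=\sigma_0$ exactly as it acts on $\coprod_i A_i$.

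For $B\in \Abs(k)$ with $B\otimes_k k_s=\coprod_j B_j$, formula \eqref{eq5.4} gives
\begin{align*}
\LMot_\sim(E)(Lh(A),i_E Lh(B)) &= \Bigl(\bigoplus_{i,j} L^{\dim A_i}_\sim(B_j\times_{k_s} A_i)\Bigr)^{\Gamma_E},\\
\LMot_\sim(k)(Lh(A_{(k)}),Lh(B)) &= \Bigl(\bigoplus_{\sigma,i,j} L^{\dim A_i}_\sim(B_j\times_{k_s} A_i^{\sigma})\Bigr)^{\Gamma}.
\end{align*}
These coincide by the elementary observation that $\Gamma$-invariants of the coinduced module $\bigoplus_{\sigma\in \Gamma/\Gamma_E}M^{\sigma}$ equal $M^{\Gamma_E}$. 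The bijection is plainly natural in both arguments, so Yoneda produces the desired adjoint pair. For the Galois statement, the canonical isomorphism $E\otimes_k E\simeq \prod_{g\in G}E$ gives $A_{(k)}\otimes_k E\simeq \coprod_{g\in G} g_* A$ in $\Abs(E)$ (using the convention for $g_*$ recalled after Definition \ref{d1}), which upon applying $Lh_\sim$ yields \eqref{eq5.6}; naturality extends this to arbitrary $M\in \LMot_\sim(E)$.

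The main obstacle is bookkeeping: making the reference embedding $\sigma_0$ and the conventions for $g_*$ versus $g^*$ consistent across the computation, and verifying that the proposed bijection on Hom groups is compatible with composition of correspondences. The latter reduces, via \eqref{eq4.1}, to the analogous verification in $\LMot_\sim(k_s)$, where it amounts to tracking Galois indices through the component decomposition.
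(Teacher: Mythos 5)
Your proof follows essentially the same route as the paper's (very terse) proof: reduce to $\LCorr$, read the adjunction isomorphism off the definition \eqref{eq5.4}, and derive the Galois case from the isomorphism $A_{(k)}\otimes_k E\simeq\coprod_{g\in G}g_*A$. You spell out the invariants-of-(co)induced-module step that the paper leaves implicit; the one small difference is that the paper obtains \eqref{eq5.6} as a natural transformation upfront, by applying the unit $\Id\to i_E\lambda_E$ to each $g_*M$ and summing, and then checks it is an isomorphism on the generators $Lh(A)\otimes\L^n$ — your phrase ``naturality extends this to arbitrary $M$'' is doing exactly that work and is fine as long as one observes that your isomorphism on generators is itself natural in $A$ (which it is, since $A_{(k)}\otimes_k E\simeq\coprod_g g_*A$ is natural), so that it passes to idempotent summands.
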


\begin{proof} It suffices to prove that the given recipe for $\lambda_E$ defines a right adjoint to the functor $\LCorr_\sim(k)\to \LCorr_\sim(E)$; this corresponds to a natural isomorphism
\[\LCorr_\sim(E)(A_E,B)\simeq \LCorr_\sim(k)(A,B_{(k)})\]
for $B\in \Abs(k)$, which follows from \eqref{eq5.4}. If $E/k$ is Galois, the counit morphism $i_E\lambda_E\to \Id$ applied to $g_*M$ for all $g\in G$ yields a morphism \eqref{eq5.6}: it is an isomorphism if $M=Lh(A)\otimes \L^n$ for $A\in \Abs(E)$ and $n\in\Z$ by the isomorphism $A_{(k)}\otimes_k E\simeq \coprod_{g\in G} g^*A$, hence in general.
\end{proof}

\subsection{Proof of Theorem \ref{t3}} Apply Lemma \ref{l3} over $k_s$ and take Galois invariants. It follows that, for $A,B\in \LCorr(k)$, the homomorphism 
\[\LCorr(k)(A,B)\to \LCorr_\num(k)(A,B)\] 
restricts to an isomorphism on $\LCorr_0(k)(A,B)$, where $\LCorr_i(k)(A,B)\allowbreak\subseteq \LCorr(k)(A,B)$ is the subspace of Beauville weight $i$ (with respect to $A\times B$). Since this subspace obviously vanishes for $i\ne 0$ modulo algebraic equivalence, $\LCorr_\alg§k)\to \LCorr_\num(k)$ is an equivalence, and this extends canonically to $\LMot$. The semi-simplicity claim is proven as in \cite{jannsen} (or \cite{akcorr}), by using a Weil cohomology. The other claims of a) hold because algebraically trivial cycles are smash-nilpotent \cite{voe,voi}, hence homologically trivial. Note that, here, this merely follows from the fact that $Lh^1(A)$ is odd-dimen\-sion\-al in the sense of Kimura \cite{kim} for any abelian variety $A$ (reduce to the case of a cycle in $L^1_1(A)$). b) is then immediate, by using the weight theory of pure Hodge structures in characteristic $0$ and of $l$-adic representations in positive characteristic.

\subsection{Proof of Theorem \ref{t4}} The last statement follows from Lemma \ref{l3}, which also shows that the image of the section $\LMot_\num(k)\to \LMot(k)$ is the subcategory $\LMot_0(k)$ with the same objects, and such that
\begin{equation}\label{eqb6}
\LMot_0(k)(Lh(A),Lh(B)) = L^{\dim A}_0(B\times A).
\end{equation}

For an abelian variety $A$, Beauville's conjectures in \cite{beauville2} predict that
\begin{thlist}
\item $CH^p_{(0)}(A)\inj A^p_\num(A)$;
\item $CH^p_{(s)}(A)=0$ for all $s<0$.
\end{thlist}

For the subgroups $L^p_{(s)}(A)$, (i) is true by Lemma \ref{l3}, and (ii) follows from Lemma \ref{l0} since $L(d,a_1,\dots,a_p,q)\in L^{p+q}_{(p)}(A)$ for any $(p,q)$. We record this as
\bigskip
\begin{thlist}
\item[(iii)] $L^p_{(0)}(A)\inj L^p_\num(A)$;
\item[(iv)] $L^p_{(s)}(A)=0$ for all $s<0$.
\end{thlist}

 The existence of  the Deninger-Murre Chow-Künneth projectors (Cor\-ollary \ref{c1}) is the analogue of \cite[I, Conj. (A)]{murre}. Reasoning as in \cite[I, 2.5.4]{murre}, the corresponding analogue to Murre's filtration then verifies $F^\nu L^j(A) =\bigoplus_{s\ge \nu} L^j_s(A)$ for $0\le \nu \le j$ and $F^{j+1} L^j(A) =0$. Since $\Ker(L^g(A\times A)\to L^g_\num(A\times A))$ is a nilideal by \cite{kim}, any other choice of Chow-Künneth projectors is conjugate to the canonical one under a self-correspondence $1+n$ with $n\sim_\num 0$, as in \cite[Lemma 5.4]{jannsen}. By (iii) and (iv) applied to $A\times A$, we have $n\in L^g_{(>0)}(A\times A)$, hence $n L^j_{(s)}(A)\subseteq \bigoplus_{t>s} L^j_{(t)}(A)$. This implies that the filtration $F^\nu L^*(A)$ does not depend on the choice of a Chow-K\"unneth decomposition, which is the analogue of Murre's Conjecture (C). Finally, we get the analogue of Murre's conjecture (B) (resp. (D)) from (iv) (resp. (iii)). 

\section{Proof of Theorem \ref{t5}}\label{s6}

\subsection{Centralisers in Tannakian categories}\label{s7.1} Let $\sA$ be a Tannakian category, with $Q=\End_\sA(\un)$ a field of characteristic $0$, and let $\omega:\sA\to \Vec_K$ be a fibre functor, where $K$ is an extension of $Q$. We write $H=\Aut^\otimes \omega$ for the corresponding Tannakian group (an affine $K$-group).

\begin{lemma}\label{l7.2} One has the isomorphism 
\[\sA(N,N')\otimes_Q K\iso \Hom_K(\omega(N),\omega(N'))^H\]
for any $N,N'\in \sA$.
\end{lemma}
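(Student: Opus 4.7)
The statement is a direct application of Tannakian duality after extending scalars from $Q$ to $K$. I would proceed in three steps.

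First, form the $K$-linear $\otimes$-category $\sA_K$ with the same objects as $\sA$ and with Hom-spaces $\sA_K(M,N)=\sA(M,N)\otimes_Q K$, then take its pseudo-abelian (and, if need be, abelian) hull. Since $Q$ has characteristic $0$, extension of scalars preserves abelianness and rigidity, so $\sA_K$ is again a Tannakian category over $K$. The fibre functor $\omega$ factors canonically through a $K$-linear symmetric monoidal exact functor $\omega_K:\sA_K\to \Vec_K$, and $\omega_K$ is still faithful (hence a fibre functor), because $\omega$ was faithful and tensoring $\sA(M,N)$ with $K$ over $Q$ is faithfully flat.

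Second, I would identify the Tannakian group: any $\otimes$-automorphism of $\omega$ is, by construction, automatically $K$-linear on each $\omega(N)$, so it extends uniquely to a $\otimes$-automorphism of $\omega_K$, and conversely. Thus $\Aut^{\otimes}(\omega_K)=H$ as affine $K$-groups.

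Third, apply the fundamental theorem of neutral Tannakian categories (Deligne--Milne): the fibre functor $\omega_K$ induces an equivalence of $K$-linear symmetric monoidal categories $\sA_K\simeq \Rep_K(H)$. Taking Hom-spaces between $N$ and $N'$ on both sides yields
\[\sA(N,N')\otimes_Q K = \sA_K(N,N') \iso \Hom_{\Rep_K(H)}(\omega N,\omega N') = \Hom_K(\omega(N),\omega(N'))^H,\]
which is the claim.

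The only nontrivial verification is the first step, namely that $\sA_K$ is genuinely Tannakian (in particular abelian) and that $\omega_K$ remains a fibre functor; this is a standard but slightly technical point on base change of Tannakian categories, and is where I would expect to spend most of the work. Everything else is then a formal consequence of Tannakian reconstruction.
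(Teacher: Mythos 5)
Your proof is correct and takes essentially the same route as the paper: base change the Tannakian category from $Q$ to $K$, identify $\Aut^{\otimes}(\omega_K)$ with $H$, and apply Tannakian reconstruction to reduce to the standard formula $\Hom_{\Rep_K(H)}(V,W)=\Hom_K(V,W)^H$. The only difference is that the paper simply cites Saavedra Rivano (and André--Kahn) for the construction and properties of $\sA_K$, whereas you sketch the construction yourself; the technical points you flag (that $\sA_K$ is Tannakian and $\omega_K$ a fibre functor) are exactly what those references supply.
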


\begin{proof}  Recall \cite[III.1]{saa} (see also \cite[5.3.1]{akos}) that one can define a $K$-linear Tannakian category $\sA_K$ and a $Q$-linear $\otimes$-functor $(-)_K:\sA\to \sA_K$ such that $\sA(A,B)\otimes_Q K\to \sA_K(A_K,B_K)$ is an isomorphism for any $A,B\in \sA$ and that $\omega$ extends canonically to a fibre functor on $\sA_K$. This reduces us to the case where $Q=K$.
The claim then follows from Tannakian theory, since $\sA$ gets identified with $\Rep_K(H)$. 
\end{proof}

If $A$ is a $K$-algebra and $X$ is a subset of $A$, we write $C_A(X)$ for its centraliser.

\begin{lemma}\label{l1a} Let $A,B$ be two $K$-algebras, and let $X\subset A$, $Y\subset B$. Then $C_{A\otimes_K B}(X\otimes Y) = C_A(X)\otimes_K C_B(Y)$.
\end{lemma}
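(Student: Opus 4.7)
The inclusion $\supseteq$ is routine: given $a\in C_A(X)$, $b\in C_B(Y)$ and $x\otimes y\in X\otimes Y$, one checks on pure tensors that $(a\otimes b)(x\otimes y) = (ax)\otimes (by) = (xa)\otimes (yb) = (x\otimes y)(a\otimes b)$, and extends $K$-linearly. The content is therefore the reverse inclusion.

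For the converse, I would work in the natural setting where $X$ and $Y$ are unital $K$-subalgebras (the case in which the lemma will be invoked), so that $x\otimes 1_B$ and $1_A\otimes y$ belong to $X\otimes Y$ for every $x\in X$, $y\in Y$. Take $z\in C_{A\otimes_K B}(X\otimes Y)$, fix a $K$-basis $\{e_\alpha\}$ of $B$, and write $z=\sum_\alpha c_\alpha\otimes e_\alpha$ (finite sum). Commuting $z$ with $x\otimes 1_B$ for each $x\in X$ yields
\[\sum_\alpha (c_\alpha x - x c_\alpha)\otimes e_\alpha = 0,\]
so linear independence of the $e_\alpha$ forces $c_\alpha\in C_A(X)$ for every $\alpha$; that is, $z\in C_A(X)\otimes_K B$. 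Expanding $z$ symmetrically in a $K$-basis of $A$ and commuting with $1_A\otimes y$ gives likewise $z\in A\otimes_K C_B(Y)$.

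To conclude, I would combine these two containments via the general linear-algebra identity
\[(V\otimes_K B)\cap (A\otimes_K W) = V\otimes_K W\]
inside $A\otimes_K B$, valid for arbitrary $K$-subspaces $V\subseteq A$ and $W\subseteq B$; this is proved by extending $K$-bases of $V$ and $W$ to $K$-bases of $A$ and $B$ respectively and noting that the resulting tensor basis of $A\otimes_K B$ refines both of the spaces in the intersection. The only delicate point, in full generality, is justifying the reduction to the unital-subalgebra case — one observes that $C_A(X)=C_A(\tilde X)$ for $\tilde X$ the unital subalgebra generated by $X$, and similarly for $Y$, and that commutation with all $x\otimes y$ passes to the algebra generated inside $A\otimes_K B$. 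Once that reduction is in place, the basis-expansion argument above is the whole proof; I do not expect a genuine obstacle.
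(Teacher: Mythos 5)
Your argument runs along the same lines as the paper's: expand in a $K$-basis of one factor, use commutation against elements of the form $x\otimes 1$ (resp.\ $1\otimes y$) to force the coefficients into the appropriate centralizer, then assemble. The only real difference is the concluding step. You show $z\in C_A(X)\otimes_K B$ and $z\in A\otimes_K C_B(Y)$ separately and invoke the linear-algebra identity $(V\otimes_K B)\cap(A\otimes_K W)=V\otimes_K W$; the paper instead, after finding that the coefficients $b_i$ lie in $C_B(Y)$, re-expresses $c=\sum_j\alpha_j\otimes\beta_j$ with $(\beta_j)$ a maximal $K$-free subfamily of the $b_i$'s and reads off $\alpha_j\in C_A(X)$ directly, never needing the intersection lemma. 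Both versions are valid and of comparable length.

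One point worth making explicit, which you rightly flag as delicate. The reduction to $1_A\in X$, $1_B\in Y$ — stated as ``without loss of generality'' in the paper — is in fact a genuine hypothesis, not a harmless normalisation: for $A=B=M_2(K)$ and $X=Y=\{E_{12}\}$, one has $\dim_K C_A(X)\otimes_K C_B(Y)=4$, but $C_{A\otimes_K B}(X\otimes Y)$ is the centralizer of a single matrix unit in $M_4(K)$, of dimension $10$. Your proposed repair — passing to the unital subalgebras $\tilde X,\tilde Y$ — runs into the same obstruction, because the subalgebra of $A\otimes_K B$ generated by $X\otimes Y$ need not contain $\tilde X\otimes\tilde Y$ (it need not contain $x\otimes 1$, for instance). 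So the lemma should be read with the tacit hypothesis that $X$ and $Y$ contain the respective units (equivalently, are unital subalgebras, which is the only case used: $X=\omega(\End_\sA(M))$, $Y=\omega(\End_\sA(N))$ in Proposition \ref{p1a}~(iii)). Under that hypothesis your argument, like the paper's, is complete and correct.
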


\begin{proof} One inclusion is obvious. Let us show the other. Without loss of generality, we may assume that $1_A\in X$ and $1_B\in Y$.

Let $c\in C_{A\otimes_K B}(X\otimes Y)$. Choose a $K$-basis $(a_i)$ of $A$, and write
\[c=\sum_i a_i\otimes b_i,\quad b_i\in B.\]

Writing $c(1_X\otimes y) = (1_X\otimes y) c$ for any $y\in Y$, we find that $b_i\in C_B(Y)$ for any $i$. From the $b_i$'s, extract now a maximal $K$-free  system $\beta_j$ and rewrite $c$ in the form
\[c=\sum_j \alpha_j \otimes \beta_j.\]

By the same reasoning with $x\otimes 1_Y$ for any $x\in X$, we find that $\alpha_j\in C_A(X)$ for all $j$.
\end{proof}

For $M\in \sA$, write $C_\omega(M)=C(M)$ for the centraliser of $\omega(\End_\sA(M))$ (or $\omega(\End_\sA(M))\otimes_Q K$) in $\End \omega(M)$.

\begin{prop}\label{p1a} For an integer $r>0$, write $rM$ for $\bigoplus_{i=1}^r M$. The objects $C(M)$ have the following properties:
\begin{thlist}
\item $C(M )\iso C(rM)$ for the diagonal homomorphism $\End  \omega (M )\allowbreak\inj \End  \omega (rM)$.
\item If $\sA(M ,N)=\sA(N,M )=0$, $C(M )\times C(N) \iso C(M \oplus N)$ for the inclusion $\End  \omega (M )\times \End \omega (N) \inj\End \omega (M \oplus N)$.
\item $C(M \otimes N) \subset C(M )\otimes_K C(N)$ for the isomorphism $\End \omega (M \otimes N)\iso \End \omega (M )\otimes_K \End \omega (N)$.
\item If $L\in \sA$ is invertible, $C(M )\simeq C(M \otimes L)$.
\item $C(M ^\vee)\simeq C(M )^\op$, where $M ^\vee$ is the dual of $M$.
\item If $M$ is semi-simple, $C(M)$ is semi-simple and $\End_\sA(M)$ is the centraliser of $C(M)$. In particular, the centres of $C(M)$ and $\End_\sA(M)$ coincide.
\end{thlist}
\end{prop}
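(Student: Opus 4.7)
My plan is to handle (i)--(v) by direct computations in the ambient $K$-algebra $\End_K\omega(M\oplus N)$ (resp.\ $\End_K\omega(M\otimes N)$, etc.), using Lemma~\ref{l7.2} only to identify $\End_\sA(M)\otimes_Q K$ with a subalgebra of $\End_K\omega(M)$; with that identification, everything reduces to classical noncommutative algebra. The only genuinely nontrivial part is (vi), which I would derive from the double centraliser theorem for a semisimple subalgebra of a central simple algebra.

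For (i), identify $\End_K\omega(rM)\simeq M_r(\End_K\omega(M))$ and $\End_\sA(rM)\simeq M_r(\End_\sA M)$; the latter contains all matrix units $E_{kl}$, and a direct matrix-unit argument shows that any element of $M_r(\End_K\omega(M))$ centralising them is scalar, reducing to the case $r=1$. For (ii), the orthogonal projectors $1_M\oplus 0$ and $0\oplus 1_N$ lie in $\End_\sA(M\oplus N)$ and force block-diagonality, after which each block must centralise its corresponding factor. For (iii), Lemma~\ref{l1a} gives
\[C(M)\otimes_K C(N)=C_{\End_K\omega(M\otimes N)}\bigl(\omega(\End_\sA M)\otimes_K\omega(\End_\sA N)\bigr),\]
and the natural containment $\omega(\End_\sA M)\otimes_K\omega(\End_\sA N)\subseteq\omega(\End_\sA(M\otimes N))$ (an inclusion in general, not an equality) yields the stated inclusion of centralisers. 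For (iv), any invertible $L$ satisfies $\End_\sA(L)=Q$ and $\End_K\omega(L)=K$ (since $\omega(L)$ is one-dimensional), so tensoring with $L$ induces canonical isomorphisms of both $\End_\sA$ and $\End_K\omega$. For (v), transposition supplies compatible anti-isomorphisms $\End_\sA(M^\vee)\simeq\End_\sA(M)^\op$ and $\End_K\omega(M^\vee)\simeq\End_K\omega(M)^\op$, and anti-isomorphic algebras have anti-isomorphic centralisers.

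For (vi), first apply (i) and (ii) to decompose $M\simeq\bigoplus_j n_jM_j$ into isotypic components of pairwise non-isomorphic simples (for which $\sA(M_i,M_j)=0$ by Schur) and reduce to $M$ simple. Then $D:=\End_\sA(M)$ is a division $Q$-algebra with centre $F$ a finite field extension of $Q$; since $\operatorname{char} Q=0$, $F/Q$ is separable, $F\otimes_Q K$ is a product of fields, and $D\otimes_Q K$ is a corresponding product of central simple algebras, in particular semisimple. It sits inside the central simple $K$-algebra $\End_K\omega(M)$, so the double centraliser theorem for a semisimple subalgebra yields that $C(M)=C_{\End_K\omega(M)}(D\otimes_Q K)$ is semisimple with bicentraliser $D\otimes_Q K=\End_\sA(M)\otimes_Q K$ (identified with $\End_\sA(M)$ in the statement's abuse of notation). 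The assertion on centres follows formally from the bicommutant equality.

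The main obstacle is (vi): the double centraliser statement I need is the one for a \emph{semisimple}, not necessarily simple, subalgebra of a central simple algebra. I would handle this by decomposing $\omega(M)$ according to the primitive idempotents of $F\otimes_Q K$, which refines into a compatible decomposition of $\End_K\omega(M)$, and then applying the classical (simple) double centraliser theorem blockwise. Everything else is formal manipulation in rings and tensor products.
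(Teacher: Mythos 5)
Your proposal is correct and follows essentially the same route as the paper: matrix computations for (i)--(ii), Lemma~\ref{l1a} together with the map \eqref{eq12} for (iii), transposition for (v), and the double centraliser theorem for (vi). The only cosmetic differences are that for (iv) you argue directly from $\End_\sA(L)=Q$ and $\End_K\omega(L)=K$ rather than deducing it from (iii) with $N=L$ and $N=L^{-1}$, and for (vi) you reduce to $M$ simple and sketch the semisimple double centraliser theorem by hand where the paper simply cites Bourbaki~\cite[\S 14, no 5, th.~5~a)]{bbki}, which already covers the semisimple case.
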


\begin{proof} (i) and (ii) are matrix exercices. (iii) follows from the obvious homomorphism
\begin{equation}\label{eq12}
\End_\sA(M)\otimes_Q \End_\sA(N)\to \End_\sA(M\otimes N)
\end{equation}
and Lemma \ref{l1a}. (iv) follows from (iii) by taking $N=L$, then $N=L^{-1}$. (v) follows from the  compatible isomorphisms $\End(M ^\vee)\simeq \End(M )^\op$ and $\End \omega (M ^\vee)\simeq \End \omega (M )^\op$. Finally, (vi) follows from the double centraliser theorem  \cite[\S 14, no 5, th. 5 a)]{bbki}.
\end{proof}

Let $L\in \sA$ be an invertible object. In the sequel, we assume that $M$ is ``weakly polarisable with respect to $L$'': this means that there exists  an isomorphism 
\begin{equation}\label{eq10}
u:M\iso L\otimes M^\vee,
\end{equation} 
where $M^\vee$ is the dual of $M$. 
Then $u$ gives rise to a \emph{Rosati anti-automorphism}
\[\rho_{M,L,u}:\End \omega(M)\to (\End \omega(M))^\op;  \quad f\mapsto \omega(u)^{-1}(1_{\omega(L)} \otimes f^\vee) \omega(u)\]
which respects $\omega(\End_\sA(M))$, hence also $C(M)$.

Note that we don't require any symmetry property of $u$: it is not needed. Indeed:

\begin{lemma}\label{l6.2} The restriction of $\rho_{M,L,u}$ to $C(M)$ does not depend on the choice of $u$, and is an involution. We write it simply $\rho_{M,L}$ (or $\rho_M$, if $L$ is clear from the context).
\end{lemma}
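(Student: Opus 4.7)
The plan is to prove the two assertions of Lemma~\ref{l6.2} in turn.

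For the independence from $u$, given two isomorphisms $u, u'\colon M \iso L \otimes M^\vee$, set $\alpha = (u')^{-1}\circ u \in \operatorname{Aut}_\sA(M)$. Expanding the definition yields $\rho_{M,L,u}(f) = \omega(\alpha)^{-1}\,\rho_{M,L,u'}(f)\,\omega(\alpha)$ for every $f \in \End \omega(M)$. The key observation is that $\rho_{M,L,u'}$ restricts to a $K$-algebra anti-automorphism of the subalgebra $\omega(\End_\sA(M))$: indeed, for $h \in \End_\sA(M)$ one has $\rho_{M,L,u'}(\omega(h)) = \omega(u'^{-1}\circ (1_L \otimes h^\vee)\circ u') \in \omega(\End_\sA(M))$. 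Hence $\rho_{M,L,u'}$ preserves the centraliser $C(M)$ of $\omega(\End_\sA(M))$. If $f \in C(M)$, then $\rho_{M,L,u'}(f) \in C(M)$ commutes with $\omega(\alpha)\in \omega(\End_\sA(M))$, so the conjugation is trivial and $\rho_{M,L,u} = \rho_{M,L,u'}$ on $C(M)$.

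For the involution property, I would reformulate $\rho_{M,L,u}$ via the non-degenerate pairing $b_u = (1_L \otimes \operatorname{ev}_M)\circ (u \otimes 1_M)\colon M \otimes M \to L$ corresponding to $u$ under the duality adjunction. A short calculation, invoking the standard identity $\operatorname{ev} \circ (1_{V^\vee}\otimes g) = \operatorname{ev} \circ (g^\vee \otimes 1_V)$ in $\Vec_K$, shows that for any $f,g \in \End \omega(M)$, the equality $\omega(b_u)\circ (f \otimes 1_{\omega(M)}) = \omega(b_u)\circ (1_{\omega(M)} \otimes g)$ holds if and only if $f = \rho_{M,L,u}(g)$. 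Now introduce the ``dual'' polarisation $\tilde u\colon M \iso L \otimes M^\vee$ obtained from $u^\vee\colon L^{-1} \otimes M \to M^\vee$ by tensoring with $L$ and using the canonical identifications $(L\otimes M^\vee)^\vee \simeq L^{-1} \otimes M^{\vee\vee} \simeq L^{-1} \otimes M$. A coherence computation gives $b_{\tilde u} = b_u \circ \sigma_{M,M}$, where $\sigma_{M,M}$ is the commutativity constraint on $M \otimes M$. Swapping the two arguments in the characterisation above yields $\rho_{M,L,\tilde u}(g) = f \Longleftrightarrow \rho_{M,L,u}(f) = g$ on $\End \omega(M)$, i.e.\ $\rho_{M,L,\tilde u} = \rho_{M,L,u}^{-1}$. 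Combining this with the first part ($\rho_{M,L,u} = \rho_{M,L,\tilde u}$ on $C(M)$) gives $\rho_{M,L,u}^2 = \rho_{M,L,u}\circ \rho_{M,L,\tilde u} = \operatorname{id}$ on $C(M)$.

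The main obstacle is the coherence calculation identifying $b_{\tilde u}$ with $b_u \circ \sigma_{M,M}$: this requires careful bookkeeping of the canonical isomorphisms involving the double dual, the invertible object $L$, and the compatibility of $u^\vee$ with the evaluation $\operatorname{ev}_M$. Once that identity is established, the rest of the argument is formal.
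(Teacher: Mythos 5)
Your argument for independence from $u$ is essentially the paper's: writing the two polarisations as differing by an automorphism of $M$ in $\sA$ (the paper uses $u=(1_L\otimes v^\vee)u'$, you use $\alpha=(u')^{-1}u$), whose image under $\omega$ lies in $\omega(\End_\sA(M))$ and hence commutes with everything in $C(M)$, so the conjugation disappears; both are fine, and your extra remark that $\rho_{u'}$ preserves $\omega(\End_\sA(M))$ and hence $C(M)$ is the right justification. For the involutivity you take a genuinely different route. The paper simply expands $\rho^2(f)$ and, after two uses of $(-)^\vee$ and the identification $M^{\vee\vee}\simeq M$, recognises it as $\omega(u^{-1}u^\vee)\,f\,\omega((u^{-1})^\vee u)$, a conjugation by $\omega$ of a morphism of $\sA$ — and hence the identity on $C(M)$. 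You instead reformulate $\rho_u$ via the pairing $b_u$, introduce the transposed polarisation $\tilde u$, and reduce involutivity to the already-proved independence through the relations $\rho_{\tilde u}=\rho_u^{-1}$ and $\rho_{\tilde u}=\rho_u$ on $C(M)$. This is a more conceptual and arguably cleaner decomposition, and the characterisation of $\rho_u$ via $b_u$ is correct. However, the whole weight of your proof rests on the identity $b_{\tilde u}=b_u\circ\sigma_{M,M}$, which you explicitly defer to ``a coherence computation'' without carrying it out. That identity (the compatibility of transposition of morphisms with transposition of pairings, through $M\simeq M^{\vee\vee}$ and the invertible twist $L$) is true, but it is exactly the nontrivial content of the lemma — it is of the same order of difficulty as the paper's direct expansion of $\rho^2(f)$, which it is meant to replace. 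As written, your proof therefore has a real gap at its crux; either perform that coherence verification, or note that it is equivalent to the paper's computation $\rho^2(f)=\omega(u^{-1}u^\vee)\,f\,\omega((u^{-1})^\vee u)$, which is spelled out in essentially the same number of lines.
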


\begin{proof} Let $u':M\iso L\otimes M^\vee$ be another weak polarisation. Then $u=(1_L\otimes v^\vee)u'$ for some $v\in \Aut(M)$, so the first claim is obvious. For the second one, we compute, for $f\in  \End \omega(M)$:
\begin{multline*}
\rho^2(f) = \omega(u)^{-1}(1_{\omega(L)} \otimes \rho(f)^\vee) \omega(u)\\
= \omega(u)^{-1}(1_{\omega(L)} \otimes (\omega(u)^{-1}(1_{\omega(L)} \otimes f^\vee) \omega(u))^\vee) \omega(u)\\
= \omega(u)^{-1}(1_{\omega(L)} \otimes (\omega(u)^\vee(1_{\omega(L^\vee)} \otimes f) (\omega(u)^{-1})^\vee)) \omega(u)\\
= \omega(u^{-1}u^\vee) f \omega((u^{-1})^\vee u)
\end{multline*}
with an obvious abuse of notation for $u^{-1}u^\vee$. If $f\in C(M)$, this equals $f$.
\end{proof}

Write $\langle M\rangle\subset \sA$ for the (full) sub-Tannakian category generated by $M$ (i.e. the smallest full subcategory of $\sA$ which contains $M$ and is closed under subquotients, extensions, tensor products and duals), and $H_M$ for $\Aut^\otimes(\omega_{\mid \langle M\rangle})$. It is a closed algebraic subgroup of $\GL_{\omega(M)}$ and a quotient of $H$, by \cite[Prop. 2.21 (a)]{dm}.

\begin{lemma}\label{l6.1} We have $L\in \langle M\rangle$.
\end{lemma}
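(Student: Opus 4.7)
The plan is to realise $L$ as a direct summand of $M\otimes M$, which suffices since $\langle M\rangle$ is by construction closed under tensor products and subquotients.

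First, from the weak polarisation $u:M\iso L\otimes M^\vee$, I would tensor on the right with $M$ and apply the commutativity constraint to produce an isomorphism
\[M\otimes M\iso L\otimes (M^\vee\otimes M).\]
Next, I would observe that $\un$ is a direct summand of $M^\vee\otimes M$. Indeed, in any rigid symmetric $\otimes$-category, the composition
\[\un\xrightarrow{\mathrm{coev}_M} M\otimes M^\vee\xrightarrow{\sigma_{M,M^\vee}} M^\vee\otimes M\xrightarrow{\mathrm{ev}_M}\un\]
equals the categorical dimension $\dim M\in Q$. Applying the fibre functor $\omega$, this element is identified with $\dim_K\omega(M)\in\Z_{\ge 0}$. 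Assuming $M\ne 0$ (the case $M=0$ being trivial), faithfulness of $\omega$ gives $\dim M>0$; since $Q$ has characteristic $0$, this positive integer is invertible in $Q$. Hence $\mathrm{coev}_M$ admits a retraction, and $\un$ is a direct summand of $M\otimes M^\vee$, equivalently of $M^\vee\otimes M$.

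Combining the two steps, $L\simeq L\otimes\un$ is a direct summand of $L\otimes(M^\vee\otimes M)\simeq M\otimes M$. Since the latter lies in $\langle M\rangle$ and $\langle M\rangle$ is closed under subquotients, we conclude $L\in\langle M\rangle$.

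I do not anticipate any substantive obstacle here: the argument is entirely formal, relying only on rigidity and on the positivity of categorical dimensions in Tannakian categories over a characteristic-zero field. The only mildly delicate point is the degenerate case $M=0$, which is dispatched in one line.
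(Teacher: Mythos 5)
Your argument is correct and coincides with the paper's: both realise $L$ as a direct summand of $M\otimes M$ by using the weak polarisation $u:M\iso L\otimes M^\vee$ together with the fact that $\un$ splits off $M^\vee\otimes M$ because the composite $\mathrm{coev}$-$\sigma$-$\mathrm{ev}$ equals $\chi(M)=\dim_K\omega(M)\ne 0$. The only difference is cosmetic — you spell out the positivity of the categorical dimension and the degenerate case $M=0$, which the paper leaves implicit.
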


\begin{proof} Indeed, $\un$ is a direct summand of $M\otimes M^\vee$ since the composition
\[\un \by{\eta} M^\vee\otimes M\by{\sigma} M\otimes M^\vee \by{\epsilon} \un\]
is multiplication by $\chi(M)=\dim \omega(M)\ne 0$, hence $L$ is a direct summand of $M\otimes M$.
\end{proof}

\begin{lemma}\label{l6} Let
\[U(M) =\{(\phi,\lambda)\in C(M)^*\times K^*\mid \phi \rho_M(\phi) = \lambda 1_{\omega(M)}\}.\]
Then $U(M)$ is a subgroup of $C(M)^*\times K^*$, and the homomorphism
\[H_M(K)\to C(M)^*\times K^*:\quad g\mapsto (g_M,g_L)\]
lands into $U(M)$.
\end{lemma}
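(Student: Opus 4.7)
My plan is to verify that $U(M)$ is a subgroup by a direct computation exploiting that $\rho_M$ is an anti-involution of $C(M)$ (Lemma \ref{l6.2}) and that $K$ is central in $C(M)$, and then to deduce the polarisation identity for $(g_M,g_L)$ by extracting, from the naturality of a tensor automorphism $g$ of $\omega_{|\langle M\rangle}$, enough relations to compute $g_M\rho_M(g_M)$.

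For the first claim, $(1,1)$ lies obviously in $U(M)$. If $(\phi,\lambda),(\psi,\mu)\in U(M)$, then since $\mu\in K$ is central and $\rho_M$ is an anti-homomorphism,
\[
(\phi\psi)\rho_M(\phi\psi)=\phi\psi\rho_M(\psi)\rho_M(\phi)=\phi\mu\rho_M(\phi)=\mu\lambda,
\]
so $(\phi\psi,\lambda\mu)\in U(M)$. Rewriting the defining relation as $\rho_M(\phi)=\lambda\phi^{-1}$ and applying the involution $\rho_M$ gives $\phi=\lambda\rho_M(\phi^{-1})$, whence $\rho_M(\phi^{-1})=\lambda^{-1}\phi$ and therefore $\phi^{-1}\rho_M(\phi^{-1})=\lambda^{-1}$.

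For the second claim, I would first observe that $L\in\langle M\rangle$ by Lemma \ref{l6.1}, so $g_L$ is defined, and since $L$ is invertible $\omega(L)$ is one-dimensional, so $g_L\in K^{\ast}$. Naturality of $g$ with respect to $\omega(f)$ for all $f\in\End_\sA(M)$ shows $g_M\in C(M)^{\ast}$. The key intermediate identity is the dual formula
\[
g_{M^\vee}=(g_M^{-1})^\vee
\]
under the canonical identification $\omega(M^\vee)=\omega(M)^\vee$; this is extracted from the naturality of $g$ with respect to the evaluation morphism $M^\vee\otimes M\to\un$, together with $g_\un=1$.

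Finally, applying naturality of $g$ to the weak polarisation $u:M\iso L\otimes M^\vee$ yields the conjugation relation $\omega(u)\,g_M=(g_L\otimes(g_M^{-1})^\vee)\,\omega(u)$. Substituting this into $\rho_M(g_M)=\omega(u)^{-1}(1_{\omega(L)}\otimes g_M^\vee)\omega(u)$ and using $(g_M^{-1})^\vee g_M^\vee=1$, the product $g_M\rho_M(g_M)$ telescopes to $\omega(u)^{-1}(g_L\otimes 1_{\omega(M^\vee)})\omega(u)$, which equals $g_L\cdot 1_{\omega(M)}$ because $g_L$ is a scalar. This is exactly the defining condition of $U(M)$, so $(g_M,g_L)\in U(M)$. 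The only delicate step is the dual identity $g_{M^\vee}=(g_M^{-1})^\vee$; the rest is formal bookkeeping.
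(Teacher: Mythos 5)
Your proof is correct and follows essentially the same route as the paper's: both establish the subgroup property from the anti-involution structure, and both compute $g_M\rho_M(g_M)$ by applying naturality of $g$ to $\omega(u)$ together with the tensor compatibility $g_{L\otimes M^\vee}=g_L\otimes g_{M^\vee}$ and the duality identity $g_{M^\vee}=(g_M^{-1})^\vee$. The only difference is cosmetic: the paper absorbs your conjugation step into a single chain of equalities, while you isolate the dual formula $g_{M^\vee}=(g_M^{-1})^\vee$ as the "delicate step" before substituting.
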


\begin{proof} The first statement is obvious since $\rho$ is an anti-automorphism. For the second one, let $g\in H_M(K)$. We compute:
\begin{multline*}
g_M \rho(g_M) = g_M \omega(u)^{-1}(1_{\omega(L)} \otimes g_M^\vee) \omega(u) \\
=  \omega(u)^{-1}g_{L\otimes M^\vee} (1_{\omega(L)} \otimes g_M^\vee) \omega(u)\\
=\omega(u)^{-1}g_L\otimes g_{M^\vee} (1_{\omega(L)} \otimes g_M^\vee) \omega(u)\\  
=\omega(u)^{-1}g_L\otimes g_{M^\vee}  g_M^\vee \omega(u) \\
=  \omega(u)^{-1}g_L\otimes 1_{M^\vee} \omega(u) = g_L1_{\omega(M)}
\end{multline*}
since $\End \omega(L) = K$.
\end{proof}

We get the conclusion of Lemma \ref{l6}, with the same proof, after extending scalars from $K$ to any commutative $K$-algebra. This defines two closed immersions of algebraic $K$-groups 
\[H_M\subseteq \bU(M)\subset \bC(M)^*\times \G_m\]
(recall that $L\in \langle M\rangle$ by Lemma \ref{l6.1}). 

\begin{prop}\label{p5.1} The isomorphisms of Proposition \ref{p1a} (i) and (ii) induce respective isomorphisms
\begin{align*}
\bU(M)&\iso \bU(rM), &r>0\\
\bU(M)\times_{\G_m} \bU(N)&\iso \bU(M\oplus N) &\text{if } \sA(M,N)=\sA(N,M)=0
\end{align*}
where the fibre product is with respect to the second projections (cf. \cite[Def. 4.6]{milne2}). Here, we use the isomorphisms \eqref{eq10} for $rM$ and $M\oplus N$ obtained by direct sums from those of $M$ and $N$.
\end{prop}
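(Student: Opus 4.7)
The plan is to reduce both assertions to straightforward verifications once the correct weak polarisations on $rM$ and on $M\oplus N$ have been chosen and the corresponding Rosati involutions have been identified. In both cases the issue is that the isomorphisms of Proposition~\ref{p1a} (i)--(ii) have already been established at the level of centralisers $C(-)$; what we need is to check that the defining equation $\phi\rho(\phi)=\lambda\,1$ matches on both sides, which will amount to checking that the relevant Rosati involutions correspond under these isomorphisms. By Lemma~\ref{l6.2} we are free to choose whichever weak polarisation is most convenient on each side.

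For (i), fix a weak polarisation $u:M\iso L\otimes M^\vee$. The natural isomorphism $(rM)^\vee \simeq r M^\vee$ shows that $u^{\oplus r}:rM\iso L\otimes (rM)^\vee$ is again a weak polarisation. Under the Morita-type isomorphism $\End\omega(rM)\simeq M_r(\End\omega(M))$, the diagonal embedding $C(M)\inj C(rM)$ of Proposition~\ref{p1a}(i) sends $\phi$ to $\mathrm{diag}(\phi,\dots,\phi)$, and one sees directly from the formula defining $\rho_{rM,L,u^{\oplus r}}$ that it acts on this diagonal block by $\mathrm{diag}(\rho_{M,L,u}(\phi),\dots,\rho_{M,L,u}(\phi))$. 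Hence $\phi\rho(\phi)=\lambda\,1_{\omega(M)}$ if and only if $\mathrm{diag}(\phi,\dots,\phi)\cdot \rho(\mathrm{diag}(\phi,\dots,\phi))=\lambda\,1_{\omega(rM)}$, so the bijection $C(M)^*\iso C(rM)^*$ restricts to the announced isomorphism $\bU(M)\iso \bU(rM)$, compatible with the projection to $\G_m$.

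For (ii), fix weak polarisations $u_M:M\iso L\otimes M^\vee$ and $u_N:N\iso L\otimes N^\vee$. Then $u_M\oplus u_N$ is a weak polarisation on $M\oplus N$ (using $(M\oplus N)^\vee\simeq M^\vee\oplus N^\vee$). Under the hypothesis $\sA(M,N)=\sA(N,M)=0$, Proposition~\ref{p1a}(ii) identifies $C(M\oplus N)$ with the block-diagonal product $C(M)\times C(N)\subset \End\omega(M)\times \End\omega(N)\subset \End\omega(M\oplus N)$, and the formula for $\rho_{M\oplus N, L, u_M\oplus u_N}$ shows that it preserves this block decomposition and acts as $\rho_M$ on the first factor and $\rho_N$ on the second. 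Therefore $(\phi_M,\phi_N)\in C(M)^*\times C(N)^*$ satisfies $(\phi_M,\phi_N)\rho(\phi_M,\phi_N)=\lambda\,1_{\omega(M\oplus N)}$ if and only if $\phi_M\rho_M(\phi_M)=\lambda\,1_{\omega(M)}$ and $\phi_N\rho_N(\phi_N)=\lambda\,1_{\omega(N)}$ with the \emph{same} $\lambda$, i.e.\ if and only if $((\phi_M,\lambda),(\phi_N,\lambda))\in \bU(M)\times_{\G_m}\bU(N)$, yielding the second isomorphism.

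Finally, since these verifications are purely algebraic in $K$, they carry over to any commutative $K$-algebra and thereby define isomorphisms of algebraic $K$-groups, not just of their $K$-points. The only mildly delicate point is the first one, namely checking that $\rho_{rM,L,u^{\oplus r}}$ really does act diagonally on the image of the diagonal embedding of $C(M)$; I expect this to be the main obstacle, but it follows by unwinding the definition of $\rho$ together with the naturality of $(-)^\vee$ on direct sums, so it should be a short line of calculation rather than a serious difficulty.
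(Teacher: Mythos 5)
Your argument is correct and is exactly the verification the paper declines to spell out: the printed proof reads only ``Each case is a trivial verification,'' and your detailed check — that with the direct-sum polarisations the Rosati involution acts diagonally on the image of $C(M)$ (resp.\ block-diagonally on $C(M)\times C(N)$), so the defining equation $\phi\rho(\phi)=\lambda\,1$ transfers componentwise — is precisely what that phrase is gesturing at. The final remark about passing from $K$-points to points in an arbitrary commutative $K$-algebra is also apt and matches the paper's earlier convention after Lemma~\ref{l6}.
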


\begin{proof} Each case is a trivial verification.
\end{proof}

\subsection{Weight gradings}

\begin{defn}\label{d5.1} A \emph{weight grading} on an additive $\otimes$-category $\sC$ is a family of endofunctors $w_n:\sC\to \sC$, for $n\in \Z$, 
provided with a natural isomorphism $\bigoplus_{n\in Z} w_n\iso Id_\sA$ and such that
\begin{enumerate}
\item $\sA(w_i(C),w_j(D))=0$ if $i\ne j$, for any $C,D\in \sC$;
\item If $C$ is of weight $i$ and $D$ is of weight $j$, then $C\otimes D$ is of weight $i+j$.
\end{enumerate}
(An object $C\in \sC$ is \emph{of weight $i$} if $w_j(C)=0$ for $j\ne i$. We then write $i=: w(C)$.)
\end{defn}

\begin{lemma}\label{l8} a) Let $\sC$ have a weight grading. If $C\in \sC$ is dualisable and of weight $i$, then its dual $C^\vee$ has weight $-i$.\\
b) Assume that $\sA$ has a weight grading for which $w(M)=1$. Then $w(L)=2$ and 
\[\omega(M^{\otimes a} \otimes (M^\vee)^{\otimes b}\otimes L^{\otimes c})^H=0 \]
if $a-b+2c\ne 0$ (here, $a,b\ge 0$ and $c\in \Z$). If $a-b+2c=0$, then
\[M^{\otimes a} \otimes (M^\vee)^{\otimes b}\otimes L^{\otimes c}\simeq M^{\otimes a+c} \otimes (M^\vee)^{\otimes a+c}.
\]
\end{lemma}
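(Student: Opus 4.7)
The whole argument is formal, driven by the observation that $\un$ is of pure weight $0$: indeed $\un = \bigoplus_k w_k(\un)$ yields a decomposition of $1_\un$ into orthogonal idempotents in $\End(\un) = Q$, which is a field, so only one $w_k(\un)$ is nonzero; the identity $\un\otimes\un\simeq\un$ then forces $w(\un)=0$.

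For (a), given $C$ of weight $i$, decompose $C^\vee = \bigoplus_n w_n(C^\vee)$. The coevaluation $\eta\colon\un \to C\otimes C^\vee$ is a map from something of weight $0$ to $\bigoplus_n C\otimes w_n(C^\vee)$, whose $n$-th summand has weight $i+n$; by axiom (1) of Definition~\ref{d5.1} this map vanishes on all summands with $n\ne -i$, so $\eta$ factors through $C\otimes w_{-i}(C^\vee)$. Symmetrically, $\epsilon\colon C^\vee\otimes C \to \un$ vanishes on $w_n(C^\vee)\otimes C$ for $n\ne -i$. The triangle identity $1_{C^\vee} = (\epsilon\otimes 1_{C^\vee})\circ(1_{C^\vee}\otimes \eta)$, restricted to the summand $w_n(C^\vee)$ with $n\ne -i$, then gives $1_{w_n(C^\vee)}=0$, so $w_n(C^\vee)=0$. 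Hence $C^\vee = w_{-i}(C^\vee)$.

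For (b), the relation $M\simeq L\otimes M^\vee$ combined with (a) (which gives $w(M^\vee)=-1$) and axiom (2) forces $w(L)=2$. An object of the form $N = M^{\otimes a}\otimes (M^\vee)^{\otimes b}\otimes L^{\otimes c}$ then has weight $a-b+2c$ by axiom (2). When this is nonzero, axiom (1) gives $\sA(\un,N)=0$, and Lemma~\ref{l7.2} (with $N'=N$ and the first argument $\un$, whose image under $\omega$ is $K$) identifies $\omega(N)^H$ with $\sA(\un,N)\otimes_Q K = 0$. For the last claim, we use the invertibility relation $M\simeq L\otimes M^\vee$, equivalently $L\otimes M^\vee \otimes M^{-1}=\un$ in a purely formal sense, namely $L^{\otimes 1}\otimes (M^\vee)^{\otimes 1}\simeq M^{\otimes 1}$. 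When $c\ge 0$, writing $L^{\otimes c}\otimes (M^\vee)^{\otimes c}\simeq M^{\otimes c}$ turns $N$ into $M^{\otimes a+c}\otimes (M^\vee)^{\otimes b-c}$, and the hypothesis $a-b+2c=0$ gives $b-c=a+c$; when $c<0$, one has $a+c\ge 0$ (since $b = a+2c \ge 0$ implies $a\ge -2c$) and one uses $M^{\otimes|c|}\otimes L^{\otimes c}\simeq (M^\vee)^{\otimes|c|}$ to reach the same conclusion.

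There is essentially no serious obstacle: the one point to spell out carefully is the triangle-identity argument in (a), but once $\un$ is of pure weight $0$ the rest is a bookkeeping exercise in the weight grading, and (b) is an immediate corollary combined with Lemma~\ref{l7.2} and the manipulation of the isomorphism $M\simeq L\otimes M^\vee$.
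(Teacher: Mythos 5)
Your proof is correct and follows essentially the same route as the paper: part (a) via the triangle identity after showing $\eta$ and $\epsilon$ factor through the $w_{-i}$-component, and part (b) by deducing $w(L)=2$ from \eqref{eq10}, using Lemma~\ref{l7.2} for the vanishing, and manipulating $L\otimes M^\vee\simeq M$ case-by-case on the sign of $c$. The one point you make explicit that the paper leaves implicit is that $\un$ has pure weight $0$ (idempotents in $\End(\un)=Q$ and $\un\otimes\un\simeq\un$); the paper instead notes that $L$ is invertible, hence irreducible, hence pure, before applying axiom (2) — your argument that $L$ must be pure because $L\otimes M^\vee\simeq M$ is pure would benefit from a one-line justification (e.g., faithfulness of tensoring with a nonzero object under a fibre functor), but this is a cosmetic difference.
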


\begin{proof} a) Writing $C^\vee=\bigoplus w_j(C^\vee)$, (1) and (2) imply that the unit morphism $\un\by{\eta} C\otimes C^\vee$ factors through $C\otimes w_{-i}(C^\vee)$, and similarly for  the counit morphism $\epsilon$. But the identity morphism of $C^\vee$ equals
\[C^\vee\by{1_{C^\vee}\otimes \eta} C^\vee \otimes C\otimes C^\vee\by{\epsilon\otimes 1_{C^\vee}} C^\vee\]
hence factors through $ w_{-i}(C^\vee) \otimes C\otimes w_{-i}(C^\vee)$; reapplying (1) and (2), we get $w_j(C^\vee)=0$ for $j\ne -i$, as desired.

b) Since $L$ is invertible, it is irreducible hence has a weight. The first assertion then follows from a) and \eqref{eq10} (or from the proof of Lemma \ref{l6.1}), and the second follows from Lemma \ref{l7.2}. For the third, we distinguish two cases according as $c\ge 0$ or $c\le 0$. Note that, in any case, $a+c=b-c$ so this number is always $\ge 0$. If $c\ge 0$, we write
\begin{multline*}
M^{\otimes a} \otimes (M^\vee)^{\otimes b}\otimes L^{\otimes c}\simeq M^{\otimes a} \otimes (M^\vee)^{\otimes a}\otimes (M^\vee)^{\otimes 2c}\otimes L^{\otimes c}\\
\simeq M^{\otimes a} \otimes (M^\vee)^{\otimes a}\otimes (M^\vee)^{\otimes c} \otimes (M^\vee)^{\otimes c}\otimes L^{\otimes c}\\
\simeq M^{\otimes a} \otimes (M^\vee)^{\otimes a}\otimes (M^\vee)^{\otimes c} \otimes M^{\otimes c}\simeq M^{\otimes a+c} \otimes (M^\vee)^{\otimes a+c}.
\end{multline*}

The case $c\le 0$ is similar.
\end{proof}

\begin{prop}\label{p6.2} Assume that $M$ is semi-simple and that $\sA$ has a weight grading for which $w(M)=1$. Suppose moreover that, for any $n,r>0$, the composite homomorphism 
\[\sA(L,\Lambda^2(rM))^{\otimes n}\to \sA(L^n,\Lambda^2((rM))^{\otimes n})\to \sA(L^n,\Lambda^{2n}(rM))\]
induced by $\Lambda^2(rM)^{\otimes n}\to \Lambda^{2n}(rM)$ is surjective. Then $H_M= \bU(M)$.
\end{prop}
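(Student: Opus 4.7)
The plan is to apply the standard Tannakian equality criterion: since $H_M \subseteq \bU(M)$ is a closed inclusion of affine $K$-groups, equality holds iff, for every $X \in \langle M\rangle$, every $H_M$-invariant of $\omega(X)$ is already $\bU(M)$-invariant. It suffices to verify this on tensor constructions $V_{a,b,c} := \omega(M^{\otimes a}\otimes (M^\vee)^{\otimes b}\otimes L^{\otimes c})$. The scalar subgroup $\{(t\cdot 1_{\omega(M)}, t^2) : t \in K^*\}$ is contained in $\bU(M)$ since $\rho(t\cdot 1)=t\cdot 1$, and it acts on $V_{a,b,c}$ with weight $a-b+2c$; so both invariant spaces vanish unless $a-b+2c = 0$. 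In the balanced case, the isomorphism $V_{a,b,c} \iso \omega(M^{\otimes 2N}\otimes L^{-N})$ with $N = a+c\ge 0$ from Lemma \ref{l8}(b) is built from $u$ and tensor operations, hence is $\bU(M)$-equivariant. By Lemma \ref{l7.2} the $H_M$-invariants in $\omega(M^{\otimes 2N}\otimes L^{-N})$ are $\sA(L^N, M^{\otimes 2N}) \otimes_Q K$, so the goal becomes: show that every such $g$ satisfies $\phi^{\otimes 2N}\circ \omega(g) = \lambda^N \omega(g)$ for all $(\phi,\lambda) \in \bU(M)(K)$.

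The key combinatorial move is to embed $M^{\otimes 2N}$ as a direct summand of $\Lambda^{2N}((2N)M)$: writing $(2N)M = M_1 \oplus\cdots\oplus M_{2N}$, the summand of $\Lambda^{2N}(M_1 \oplus\cdots\oplus M_{2N})$ obtained by picking one factor from each $M_i$ is isomorphic to $M^{\otimes 2N}$. This gives a $\bU(M)$-equivariant injection $\sA(L^N, M^{\otimes 2N}) \hookrightarrow \sA(L^N, \Lambda^{2N}((2N)M))$. Applying the surjectivity hypothesis with $n = N$ and $r = 2N$ expresses every element of the target as a sum of compositions $L^N \to \Lambda^2((2N)M)^{\otimes N} \to \Lambda^{2N}((2N)M)$ whose first factor is an $N$-fold tensor product of morphisms $g_i : L \to \Lambda^2((2N)M)$. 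By multilinearity, and by the trivial $\bU(M)$-equivariance of any morphism in $\sA$, it then suffices to check that each individual $g \in \sA(L, \Lambda^2((2N)M))$ is $\bU(M)$-equivariant with character $\lambda$.

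For this last step I would decompose $\Lambda^2((2N)M) \simeq \bigoplus_i \Lambda^2 M_i \oplus \bigoplus_{i<j} M_i\otimes M_j$ and, using $u$, identify $\sA(L, M\otimes M) \iso \End_\sA(M)$: every such $g$ takes the form $(1_M\otimes f)\circ b_u^*$ for some $f \in \End_\sA(M)$, where $b_u^* : L \to M\otimes M$ is the copairing attached to $u$. Since $\bU(M) \subseteq C(M)^*$ centralises $\End_\sA(M)$ (so $\phi f = f\phi$) and, by the defining relation $\phi\rho(\phi) = \lambda$, scales $b_u^*$ by $\lambda$, one obtains $\phi^{\otimes 2}\circ \omega(g) = \lambda\,\omega(g)$. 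The $\Lambda^2 M_i$ components are treated identically, with the additional constraint that $f$ be $\rho$-antifixed.

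The main obstacle I anticipate is the bookkeeping in this last step: one has to verify carefully that every $g$ really does take the form $(1_M\otimes f)\circ b_u^*$ and that the identification $\sA(L, M\otimes M) \iso \End_\sA(M)$ intertwines the natural $\bU(M)$-action on the source with conjugation on the target. Once these duality and commutativity-constraint identifications are pinned down, the $\bU(M)$-equivariance follows immediately from $\phi\rho(\phi) = \lambda$ together with the centralising property of $\bU(M)$; the rest of the argument is combinatorial.
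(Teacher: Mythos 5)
Your proposal follows the same overall strategy as the paper, essentially read in reverse: you start from the target statement about invariants and reduce, while the paper builds up from the observation that $\bU(M)$ fixes $\sA(L,\Lambda^2(rM))$. The key ingredients — the surjectivity hypothesis, the embedding of $M^{\otimes 2n}$ as a direct summand of $\Lambda^{2n}((2n)M)$, the use of Lemma \ref{l8} to reduce to balanced tensor spaces — are all the same, and your explicit verification that $\bU(M)$ fixes $\sA(L,M\otimes M)\simeq\End_\sA(M)$ via the copairing $b_u^*$ and the relation $\phi\rho_M(\phi)=\lambda$ is correct; the paper obtains the same fact more quickly from Proposition \ref{p5.1}, which identifies $\bU(M)$ with $\bU(rM)\subset C(rM)^*\times\G_m$ so that centralising $\End_\sA(rM)$ is built into the definition.

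However, there is a genuine gap in the opening step. The asserted ``standard Tannakian equality criterion'' — that for a closed inclusion $H_M\subseteq\bU(M)$ of affine $K$-groups, equality holds if and only if $H_M$- and $\bU(M)$-invariants agree on every $\omega(X)$, $X\in\langle M\rangle$ — is false in general: equal invariants do not detect closed subgroups unless the smaller group is (pro)reductive. This is exactly where the hypothesis that $M$ is semi-simple is consumed: it forces $H_M$ to be reductive, and then the paper applies \cite[Prop.~3.1 (c)]{deligne-900} with $(G,H)=(\GL(\omega(M)),H_M)$ to conclude. Your proposal uses the semi-simplicity only implicitly and presents the criterion as automatic; it is not, and the reductivity input should be made explicit. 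A secondary imprecision: the scalar subgroup $\{(t\cdot 1,t^2)\}\subset\bU(M)$ shows $\omega(V_{a,b,c})^{\bU(M)}=0$ when $a-b+2c\ne 0$, but you also need $\omega(V_{a,b,c})^{H_M}=0$; this follows because the weight grading on $\sA$ supplies the same central cocharacter of $H_M$ (or, more directly, from Lemma \ref{l8} as the paper cites it), and deserves a word.
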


\begin{proof} As in the proof of Lemma \ref{l7.2}, we reduce to $K=Q$. Let $n,r>0$. 
By Proposition  \ref{p5.1}, $\bU(M)$ fixes $\End_\sA(rM)\simeq \sA(L,(rM)^{\otimes 2})$, hence also its direct summand $\sA(L,\Lambda^2(rM))$, hence also $\sA(L,\Lambda^2(rM))^{\otimes n}$, and therefore $\sA(L^n,\Lambda^{2n}(rM))$ by hypothesis; since $M^{\otimes 2n}$ is a direct summand of $\Lambda^{2n}(2nM)$, $\bU(M)$ fixes $\sA(L^n,M^{\otimes 2n})\simeq \End_\sA(M^{\otimes n})$ for all $n\ge 0$. By Lemma \ref{l7.2}, we have
\[\End_\sA(M^{\otimes n}) = \omega(M^{\otimes n}\otimes {M^\vee}^{\otimes n})^{H_M}.\]

By Lemma \ref{l8}, this shows that $\omega(T)^{H_M} = \omega(T)^{\bU(M)}$ for any $T\in \langle M\rangle$ of the form $M^{\otimes a} \otimes (M^\vee)^{\otimes b}\otimes L^{\otimes c}$. Since $M$ is semi-simple, $H_M$ is reductive and we get the conclusion by applying \cite[Prop. 3.1 (c)]{deligne-900} with $(G,H)=(\GL(\omega(M)),H_M)$.
\end{proof}

\begin{rk}\label{r6.1} The converse to Proposition \ref{p6.2} is true. Indeed, the map
\[(\omega(L^{-1}\otimes \Lambda^{2}(rM))^{\bU(M)})^{\otimes n}\to \omega(L^{-n}\otimes \Lambda^{2n}(rM))^{\bU(M)}\]
is surjective for all $r,n$ by \cite[Prop. 3.4]{milne2} (whose proof uses invariant theory). The claim now follows from Lemma \ref{l7.2}.
\end{rk}

\subsection{An ``easy'' exactness criterion}

Let 
\begin{equation}\label{eq7.1}
H\by{i} G\by{p} \Pi
\end{equation}
be a sequence of affine groups over a field $K$, where $i$ is a monomorphism and $pi=1$, i.e. $H\subseteq \Ker p$. 

\begin{prop}\label{p7.1} Assume $G, H$ proreductive (not necessarily connected). Then the following conditions are equivalent:
\begin{thlist}
\item $H=\Ker p$.
\item For any simple $S\in \Rep_K(G)$,  $S^H\ne 0$ $\Rightarrow$ $S^{\Ker p}\ne 0$.
\item For any $V\in \Rep_K(G)$,  $V^H=V^{\Ker p}$.
\end{thlist}
\end{prop}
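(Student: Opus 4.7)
The implications $\mathrm{(i)} \Rightarrow \mathrm{(iii)} \Rightarrow \mathrm{(ii)}$ are immediate: (i) is tautological, while (iii) specialises to (ii) on simple objects (one always has $S^{\Ker p} \subseteq S^H$). My plan is therefore to prove $\mathrm{(ii)} \Rightarrow \mathrm{(iii)} \Rightarrow \mathrm{(i)}$. A single structural input powers both implications: since $\Ker p$ is \emph{normal} in $G$, the subspace $V^{\Ker p}$ is $G$-stable in any $V \in \Rep_K(G)$ (if $kv = v$ for all $k \in \Ker p$ and $g \in G$, then $k(gv) = g(g^{-1}kg)v = gv$). Consequently, for a simple $V$, the subspace $V^{\Ker p}$ equals $0$ or $V$.

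For $\mathrm{(ii)} \Rightarrow \mathrm{(iii)}$: given $V \in \Rep_K(G)$, use proreductivity of $G$ to decompose $V = \bigoplus V_\alpha$ into simple summands. Either $V_\alpha^{\Ker p} = V_\alpha$, in which case $V_\alpha^H = V_\alpha$ as well since $H \subseteq \Ker p$; or $V_\alpha^{\Ker p} = 0$, in which case (ii) forces $V_\alpha^H = 0$. Either way $V_\alpha^H = V_\alpha^{\Ker p}$, and summing yields (iii).

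For $\mathrm{(iii)} \Rightarrow \mathrm{(i)}$: I would pass through coordinate rings. For any closed subgroup $H' \subseteq G$, Frobenius reciprocity combined with the proreductivity of $G$ yields the isotypic decomposition
\[\mathcal{O}(H' \backslash G) \simeq \bigoplus_\alpha V_\alpha \otimes_K (V_\alpha^*)^{H'}\]
as $G$-modules, the sum running over isomorphism classes of simple objects $V_\alpha$ of $\Rep_K(G)$. Applying (iii) to each $V_\alpha^*$, the isotypic components for $H' = H$ and $H' = \Ker p$ coincide, so the canonical injection $\mathcal{O}(\Ker p \backslash G) \hookrightarrow \mathcal{O}(H \backslash G)$ is an isomorphism. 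The induced morphism $H \backslash G \to \Ker p \backslash G$ of pointed affine $G$-schemes is therefore an isomorphism, and identifying the stabiliser of the base coset on either side gives $H = \Ker p$.

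The only mildly delicate step is the last one, which must cope with infinite-dimensional $G$-modules and with the fact that $H$ is not assumed normal (so that $H \backslash G$ is merely a scheme, not a group). Both issues are standard once the semi-simplicity of finite-dimensional $G$-representations is available.
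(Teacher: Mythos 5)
Your proposal is correct, and for the implications other than (iii) $\Rightarrow$ (i) it agrees with the paper's proof: (i) $\Rightarrow$ (ii) is declared obvious there, and what the paper compresses into ``(ii) $\Rightarrow$ (iii) by semi-simplicity'' is exactly your argument (normality of $\Ker p$ makes $V^{\Ker p}$ a $G$-submodule, hence $0$ or all of $V$ when $V$ is simple; then sum over a simple decomposition). For (iii) $\Rightarrow$ (i) you take a genuinely different route. The paper first reduces to $G$ of finite type --- restricting to the Tannakian subcategory generated by one representation --- and then invokes \cite[Prop.\ 3.1 (c)]{deligne-900} (a reductive subgroup of $\GL(V)$ is cut out by its invariant tensors), exactly as in the proof of Proposition \ref{p6.2}. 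You instead pass through coordinate rings: Frobenius reciprocity plus semi-simplicity upgrade (iii) to $\mathcal{O}(G)^{H}=\mathcal{O}(G)^{\Ker p}$, whence $H\backslash G\iso\Ker p\backslash G$ as pointed $G$-schemes, and $H=\Ker p$ by comparing base-coset stabilisers. This avoids the external citation and works directly at the pro-algebraic level, which is a small gain. But the delicate step you flag --- that $H\backslash G$ is an affine $G$-scheme with base-coset stabiliser exactly $H$ --- rests on $H$ being (pro)reductive and the theory of geometric quotients in characteristic $0$; it is not a consequence merely of semi-simplicity of $\Rep_K(G)$, and it is the same underlying content that the Deligne reference supplies, so the two proofs are of comparable depth rather than yours being more elementary. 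A minor correction: over a non-algebraically closed $K$ the isotypic factor should read $V_\alpha\otimes_{D_\alpha}(V_\alpha^*)^{H'}$ with $D_\alpha=\End_G(V_\alpha)$, not $\otimes_K$; this is harmless here because your argument really only needs the Frobenius isomorphism $\Hom_G(V_\alpha,\mathcal{O}(G)^{H'})\cong(V_\alpha^*)^{H'}$.
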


(In other words, Condition H0 is sufficient in \cite[Lemma C.1]{TJM} when $G$ and $H$ are proreductive.)

\begin{proof}  (i) $\Rightarrow$ (ii) is obvious, and (ii) $\Rightarrow$ (iii) by semi-simplicity. For (iii) $\Rightarrow$ (i) we may restrict to the Tannakian subcategory of $\Rep_K(G)$ generated by one representation, hence assume $G$ of finite type. Then the conclusion follows from  \cite[Prop. 3.1 (c)]{deligne-900} just as in the proof of Proposition \ref{p6.2}.
\end{proof}

\subsection{Action of a profinite group}\label{s5.1} See also \cite{stacks} for a different presentation.

We keep the setting of \S \ref{s7.1}, and add a $\otimes$-action of a profinite group $\Pi$ on $\sA$. Namely, we are given a homomorphism $g\mapsto g_*$ from $\Pi$ to the monoid of strict $\Q$-linear $\otimes$-endofunctors $F$ of $\sA$ (strict means that $F(M)\otimes F(N)=F(M\otimes N)$ for any $M,N\in \sA$), and $g_*\un=\un$ for all $g\in \Pi$.

\begin{defn}\label{d7.1}
Given an open subgroup $U$ of $\Pi$, we say that an object $M\in \sA$ is \emph{$U$-centered} if $g_*M=M$ for all $g\in U$; we say that $M$ is \emph{centered} if it is $U$-centered for some $U$. If $M$ is $U$-centered, $U$ acts on $\End_\sA(M)$. We say that $\Pi$ acts \emph{continuously} on $\sA$ if
\begin{itemize}
\item any object $M\in \sA$ is isomorphic to a centered object;
\item if $M$ is $U$-centered, the action of $U$ on $\End_\sA(M)$ is continuous (i.e., the stabiliser of any endomorphism is open).
\end{itemize}
\end{defn}

\begin{lemma}\label{l7.5} Suppose that $\Pi$ acts continuously on $\sA$.\\
a) If $M$ and $N$ are $U$-centered, so are $M\oplus N$, $M\otimes N$ and $M^\vee$.\\
b) If $M$ and $N$ are $U$-centered, then $U$ acts continuously on $\sA(M,N)$.\\
c) If $M$ is centered, then any direct summand of $M$ is centered.
\end{lemma}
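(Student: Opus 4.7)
The plan is to handle parts (a), (b), (c) in order, using throughout that each $g_*$ (for $g\in \Pi$) is a $\Q$-linear strict $\otimes$-endofunctor with $g_*\un=\un$.

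For (a), strict monoidality directly gives $g_*(M\otimes N)=g_*M\otimes g_*N=M\otimes N$ for $g\in U$. Additivity of $g_*$ preserves biproducts up to canonical isomorphism, so $g_*(M\oplus N)\cong g_*M\oplus g_*N=M\oplus N$; choosing the biproduct along canonical representatives turns this into equality. Similarly, because $g_*$ is $\un$-preserving and strict monoidal, $g_*(M^\vee)$ is a dual of $g_*M=M$ with unit $g_*(\eta_M)$ and counit $g_*(\epsilon_M)$, so $g_*(M^\vee)=M^\vee$ under the uniqueness of duals in a rigid category.

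For (b), by (a) the object $M\oplus N$ is $U$-centered, so continuity of the action gives a continuous $U$-action on $\End_\sA(M\oplus N)$. This endomorphism algebra decomposes via the idempotents $e_M,e_N$ cutting out $M,N$ as
\[\End_\sA(M\oplus N)=\End_\sA(M)\oplus \sA(N,M)\oplus \sA(M,N)\oplus \End_\sA(N).\]
Since $g_*M=M$ and $g_*N=N$ for $g\in U$, $g_*$ fixes both $e_M$ and $e_N$ and hence preserves this block decomposition. The restriction of a continuous action to an invariant direct summand is continuous, so $U$ acts continuously on $\sA(M,N)$.

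For (c), let $M$ be $V$-centered with $V\subseteq\Pi$ open, and let $N$ be a direct summand of $M$ given by $i\colon N\to M$, $p\colon M\to N$ with $pi=1_N$. Set $e=ip\in\End_\sA(M)$. Continuity of the $V$-action on $\End_\sA(M)$ implies that the stabiliser $W$ of $e$ in $V$ is open. For $g\in W$ we have $g_*e=e$, and the morphisms $g_*i\colon g_*N\to M$, $g_*p\colon M\to g_*N$ split the same idempotent $e$; by uniqueness of the splitting of an idempotent (up to unique isomorphism) we obtain $g_*N\cong N$, and choosing $N$ as a canonical image of $e$ converts this to $g_*N=N$, so $N$ is $W$-centered.

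The main obstacle is mostly bookkeeping: reconciling the literal equality in the definition of ``$U$-centered'' with the fact that $\oplus$, $(-)^\vee$, and images of idempotents are defined only up to canonical isomorphism. Strictness of $\otimes$ disposes of the tensor case outright, and for the other constructions the universal properties give canonical identifications compatible with the $\Pi$-action; this is what makes everything pass through without actual additional work.
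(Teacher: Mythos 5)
Your proof is correct and follows essentially the same route as the paper: (a) via (strict) monoidality and additivity, (b) by embedding $\sA(M,N)$ as a direct summand of $\End_\sA(M\oplus N)$, and (c) by taking the open stabiliser of the idempotent $e=ip$ cutting out $N$. The paper dismisses (a) as ``obvious,'' whereas you spell out the isomorphism-versus-equality bookkeeping (additivity only gives biproducts and duals up to canonical isomorphism, whereas $U$-centered demands literal equality); that care is reasonable, and the same caveat silently underlies the paper's (c) as well, where ``equivalently, $N$ is $V$-centered'' likewise conflates $g_*N\cong N$ with $g_*N=N$.
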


\begin{proof} a) is obvious. b) follows from a) because $\sA(M,N)$ is a direct summand of $\End_\sA(M\oplus N)$. For c), let $e\in \End_\sA(M)$ be the idempotent corresponding to a direct summand $N$ of $M$. If $M$ is $U$-centered, then by hypothesis there is an open subgroup $V\subseteq U$ such that $g(e)=e$ for any $g\in V$; equivalently, $N$ is $V$-centered.
\end{proof}

We now assume that the action of $\Pi$ is continuous. By Lemma \ref{l7.5}, the full subcategory of $\sA$ consisting of centered objects is equivalent to $\sA$ and Tannakian (even though it may not be closed under extensions, it is abelian since it is equivalent to an abelian category). Without loss of generality, \emph{we henceforth assume that every object of $\sA$ is centered.}

Let $\sB_1$ be the category of descent data of $\sA$ with respect to the action of $\Pi$: an object of $\sB_1$ is a system $(M,u_g)_{g\in \Pi}$ where $M\in \sA$ and $u_g:M\iso g_*M$ are isomorphisms such that $u_{gh} = g_*u_h \circ  u_g$ for any $g,h\in \Pi$, and a morphism from $N=(M,u_g)$ to $N'=(M',u'_g)$ is a morphism from $M$ to $M'$ which commutes with $u_g,u'_g$ in an obvious sense. We then get an action of $\Pi$ on $\sA(M,M')$ be the formula
\begin{equation}\label{eq7.5}
g(f) = {u'_g}^{-1}g_*f u_g, \quad g\in \Pi,\quad f\in \sA(M,M')
\end{equation}
so that
\begin{equation}\label{eq7.4}
\sB_1(N,N')=\sA(M,M')^\Pi.
\end{equation}

Then $\sB_1$ inherits a $Q$-linear $\otimes$-structure by
\[(M,u_g)\otimes (N,v_g) = (M\otimes N,u_g\otimes v_g) \]
for which it is rigid, and a $\otimes$-functor $\iota_1:\sB_1\to \sA$, $(M,u_g)\mapsto M$.

Let $(M,u_g)\in \sB_1$, and let $U\subseteq \Pi$ be such that $M$ is $U$-centered. Then the $u_g$ ($g\in U$) define an action of $U^\op$ on $M$.

\begin{defn}\label{d7.2} The descent datum $(M,u_g)$ is \emph{continuous} if $u_g=1_M\, \forall g\in V$ for a suitable open subgroup $V\subseteq U$. We write $\sB$ for the full subcategory of $\sB_1$ consisting of continuous descent data, and $\iota:\sB\inj \sA$ for the restriction of $\iota_1$ to $\sB$.
\end{defn}

\begin{prop}\label{p7.2} a) If $(M,u_g),(M',u'_g)\in \sB$, the action \eqref{eq7.5} is continuous.\\
b) The category $\sB$ is abelian; the functor $\iota$ is faithful and exact.\\
c) Any object $M$ of $\sA$ is a direct summand of an object of the form $\iota(N)$ for $N\in \sB$; if $G=\Aut^\otimes(\omega\iota)$, the natural homomorphism $\iota^*:H\to G$ is a monomorphism.\\
d) Let $\sB^0$ be the full subcategory of $\sB$ formed of the objects $(M,u_g)$ where $M$ is of the form $\un^r$ for some $r\ge 0$, and let $\lambda:\sB^0\inj \sB$ be the full embedding. Then $\sB^0$ is a rigid $\otimes$-subcategory of $\sB$ and the functor $\theta:\sB^0\to \Rep_Q(\Pi)$ defined by the action of $\Pi$ on the $\iota M$'s is an equivalence of $\otimes$-categories.\\
e) Given $N\in \sB^0$, any subobject $P$ of $\lambda(N)$ is isomorphic to an object of the form $\lambda(N')$ for $N'$ a subobject of $N$. The homomorphism $\pi:G\to \Pi$ induced by d) is faithfully flat.
\end{prop}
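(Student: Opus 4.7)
Parts (a) and (b) should be formal. For (a), on the open subgroup $V\cap V'$ where $u_g=1_M$ and $u'_g=1_{M'}$ respectively, the action \eqref{eq7.5} reduces to $g(f)=g_*f$; continuity of this restricted action on $\sA(M,M')$ then follows from Lemma \ref{l7.5}(b) applied to $M\oplus M'$. For (b), given a morphism $f:(M,u_g)\to(M',u'_g)$ in $\sB$, the objects $\Ker f$, $\IM f$ and $\Coker f$ in $\sA$ inherit canonical descent data by the universal properties applied to $(u_g,u'_g)$, and these data remain continuous (any open $V$ killing both $u_g$ and $u'_g$ kills the induced maps). Faithfulness and exactness of $\iota$ are then immediate from \eqref{eq7.4}.

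The key step is (c). I would take $M\in\sA$ centered by some open subgroup of $\Pi$, shrink it to an open \emph{normal} subgroup $U$ (possible since $\Pi$ is profinite), and form the induced object
\[N = \bigoplus_{[g]\in\Pi/U} g_*M,\]
which is a finite direct sum. For $h\in\Pi$, the coset permutation $gU\mapsto hgU$ combined with the strict identifications $h_*(g_*M)=(hg)_*M$ defines an isomorphism $u_h:N\iso h_*N$ satisfying the cocycle identity by construction. For $h\in U$, normality gives $g^{-1}hg\in U$, hence $h_*(g_*M)=g_*((g^{-1}hg)_*M)=g_*M$ (since $M$ is $U$-centered), and $u_h=1_N$; this is precisely continuity in the sense of Definition \ref{d7.2}, so $N\in\sB$. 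The summand indexed by the trivial coset recovers $M$, so $M$ is a direct summand of $\iota N$. The monomorphism statement for $\iota^*:H\to G$ then follows from \cite[Prop.~2.21]{dm}, since every object of $\sA$ is a subquotient (in fact a direct summand) of some $\iota N$.

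For (d), $Q$-linearity and strictness of the $\otimes$-action force $g_*$ to act as the identity on $\End_\sA(\un^r)=M_r(Q)$ (via the canonical identification $g_*\un^r\simeq\un^r$ coming from $g_*\un=\un$). Hence a continuous descent datum $(\un^r,u_g)\in\sB^0$ amounts to a family $(u_g)\in\GL_r(Q)$ satisfying $u_{gh}=u_hu_g$ and $u_g=1$ on an open subgroup, i.e.\ equivalently to a continuous representation $g\mapsto u_g^{-1}$ of $\Pi$. This correspondence is $\otimes$-compatible, giving the equivalence $\theta$. For (e), a subobject of $\lambda(N)=(\un^r,u_g)$ in $\sB$ corresponds to an idempotent $e\in M_r(Q)$ commuting with all $u_g$; since $Q$ is a field, $e$ is conjugate in $\GL_r(Q)$ to a diagonal idempotent, so the underlying $\sA$-object of the subobject is isomorphic to $\un^s$ ($s=\rg e$) and the descent datum restricts to give the desired $N'\subseteq N$ in $\sB^0$. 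Faithful flatness of $\pi:G\to\Pi$ then follows from \cite[Prop.~2.21]{dm}, $\lambda$ being fully faithful with image stable under subobjects.

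I expect the main obstacle to be in (c): first, verifying carefully that the induced descent datum on $N$ is genuinely continuous (which crucially relies on the existence of open \emph{normal} subgroups in $\Pi$ together with the $U$-centeredness hypothesis), and second, translating the direct-summand property into the closed-immersion statement for $\iota^*$ via the Deligne--Milne dictionary. The remaining mild subtlety lies in (d), where one must extract from the definition of strict $\otimes$-action the fact that $g_*$ acts trivially on $\End_\sA(\un^r)$; the rest of (d) and (e) are then essentially formal.
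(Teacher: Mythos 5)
Your proposal matches the paper's proof in essence: (a)--(b) are declared obvious in the paper (you supply the details), (c) uses the same induced object $\bigoplus_{g\in\Pi/U} g_*M$ with the permutation descent datum and closes with \cite[Prop.~2.21~(b)]{dm}, and (d)--(e) give the same identification $\sB^0\simeq\Rep_Q(\Pi)$ followed by \cite[Prop.~2.21~(a)]{dm}. The one point where your writeup is genuinely sharper is the insistence in (c) that $U$ be shrunk to an open \emph{normal} subgroup: without normality, an element $h\in U$ permutes the summands of the induced sum nontrivially, so the paper's parenthetical claim that $u_g=1_{M'}$ for $g\in U$ does not hold as stated, and your normal shrink is the correct repair (one can also pass to the normal core of $U$, which still centers $M$).
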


\begin{proof} a) and b) are obvious. For c), let $M'=\bigoplus_{g\in \Pi/U} g_*M$ (a finite sum). Then $M'$ is provided with a canonical continuous descent datum $(u_g:M'\iso g_*M')$, which permutes the summands ($u_g=1_{M'}$ if $g\in U$). We take $N=(M',u_g)$.  The last claim then follows from  \cite[Prop. 2.21 (b)]{dm}. 

d) If $V\in \Rep_Q(\Pi)$ has dimension $n$, the choice of a basis of $V$ provides an isomorphism $V\simeq \theta(\un^n,u_g)$ where $u_g$ is defined by the action of $g\in\Pi$ on $V$, so $\theta$ is essentially surjective. It is also fully faithful by definition of the morphisms in $\sB$.

e) Let $N=(M,u_g)$. Since $\un$ is simple \cite[Prop. 1.17]{dm}, $M\simeq \un^n$ is semi-simple. If $P=(N',u'_g)$, then $N'\simeq \un^m$ for some $m\le n$. The last claim then follows from \cite[Prop. 2.21 (a)]{dm}.
\end{proof}

In Proposition \ref{p7.2}, I don't know if the sequence
\begin{equation}\label{eq7.2}
1\to H\by{\iota^*} G\by{p}\Pi\to 1
\end{equation}
is exact at $G$ in general\footnote{See however \cite[Theorem 4.6]{stacks}.}. This is true if $\sA$ is semi-simple:

\begin{prop} \label{l7.3} If $\sA$ is semi-simple,\\
a)  so is $\sB$.\\
b) Let $S\in \sB$ be simple. Then $S\in \sB^0$ if and only if $\sA(\un,\iota S)\ne 0$; we then have $S\simeq \sA(\un,\iota S)$, where the right hand side is viewed as a $\Pi$-module by \eqref{eq7.5}.\\
c) The full embedding $\lambda:\sB^0\to \sB$ has the (non mo\-no\-id\-al) right adjoint $\rho:M\mapsto  \sA(\un,\iota M)$.\\
d) The sequence \eqref{eq7.2} is exact.
\end{prop}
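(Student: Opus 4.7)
The plan is to prove a) by an averaging argument (using continuity of the $\Pi$-action and $\operatorname{char} Q = 0$); to bootstrap b) from a) together with Proposition \ref{p7.2} d), e); to read off c) from $\theta$ and \eqref{eq7.4}; and to reduce d) to Proposition \ref{p7.1}.

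For a), let $0\to N'\to N\by{q} N''\to 0$ be short exact in $\sB$, and choose any $\sA$-linear splitting $s\in \sA(\iota N'',\iota N)$ of $\iota q$, which exists by semi-simplicity of $\sA$. By Proposition \ref{p7.2} a), the $\Pi$-action \eqref{eq7.5} on $\sA(\iota N'',\iota N)$ is continuous, so $\operatorname{Stab}(s)\supseteq U$ for some open normal $U\trianglelefteq \Pi$. The average $\bar s=[\Pi:U]^{-1}\sum_{g\in \Pi/U} g(s)$ then lies in $\sA(\iota N'',\iota N)^\Pi=\sB(N'',N)$ (using \eqref{eq7.4} and $\operatorname{char} Q=0$), and still satisfies $q\bar s=1$ since $q$ is already $\Pi$-equivariant.

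For b), the direction $S\in \sB^0\Rightarrow \sA(\un,\iota S)\ne 0$ is clear. Conversely, set $V=\sA(\un,\iota S)$, a continuous $\Pi$-representation by Proposition \ref{p7.2} a), and $\tilde V=\theta^{-1}(V)\in \sB^0$. A direct verification using \eqref{eq7.5} shows that the evaluation $V\otimes_Q\un\to \iota S$ defines a non-zero morphism $\varphi\colon\lambda(\tilde V)\to S$ in $\sB$. By a), $S$ is a direct summand of $\lambda(\tilde V)$; by Proposition \ref{p7.2} e), any such summand is of the form $\lambda(N')$, so $S\in \sB^0$, and comparison via $\theta$ identifies $S$ with $V$. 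Part c) is then tautological: using \eqref{eq7.4} and $\theta$,
\[\sB(\lambda N,M)=\sA(\iota \lambda N,\iota M)^\Pi=\Hom_\Pi(\theta N,\sA(\un,\iota M))=\sB^0(N,\rho M).\]

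For d), exactness at $H$ and at $\Pi$ is Proposition \ref{p7.2} c) and e), and $H\subseteq \Ker p$ since $H=\Aut^\otimes(\omega)$ acts trivially on $\omega\iota\lambda M$ for any $M\in \sB^0$ (whose underlying object is a sum of copies of $\un$). To obtain $H=\Ker p$, I apply Proposition \ref{p7.1}: $G$ is proreductive thanks to a), $H$ is proreductive by semi-simplicity of $\sA$, and for a simple $S\in \sB_K$ one has
\[\omega(\iota S)^H\ne 0\ \Leftrightarrow\ \sA_K(\un,\iota S)\ne 0\ \Leftrightarrow\ S\in \sB^0_K\ \Leftrightarrow\ \omega(\iota S)^{\Ker p}\ne 0,\]
by Lemma \ref{l7.2}, then b), then Tannakian duality applied to the quotient $G\to\Pi$ corresponding to $\lambda$ (using that $\sB^0$ is closed under subobjects in $\sB$, an easy consequence of b) and semi-simplicity).

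The main difficulty will be the equivariance check in b) together with the last equivalence in d): both require carefully tracing the cocycle $u_{gh}=g_*u_h\circ u_g$ through \eqref{eq7.5} and $\theta$ to avoid sign/order mistakes, and verifying that $\sB^0$ really is the full Tannakian subcategory of $\sB$ on which $\Ker p$ acts trivially.
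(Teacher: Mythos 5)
Your proof is correct, and parts a) and b) take a genuinely different route from the paper. For a), you split short exact sequences in $\sB$ by averaging an $\sA$-linear splitting over a finite quotient $\Pi/U$ (the Maschke-style argument, enabled by continuity and $\operatorname{char} Q = 0$); the paper instead observes that $\End_\sB(N)$ is the centraliser of the semi-simple subalgebra $Q\bar\Pi$ in the semi-simple algebra $\End_\sA(\iota N)$ and invokes Bourbaki's theorem that such centralisers are semi-simple. Both are standard and equivalent in force, but yours is more self-contained while the paper's identifies $\End_\sB(N)$ explicitly, which is information it reuses. For b), you build the comparison by an explicit evaluation morphism $\lambda(\theta^{-1}\sA(\un,\iota S))\to S$ and then exploit a) plus Proposition~\ref{p7.2}~e); the paper instead decomposes $\iota S$ into isotypic components and shows directly that if $\un$ occurs then no other isotype can, so $\iota S\simeq\un^n$. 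Yours is more constructive, the paper's is shorter. For c), the paper merely cites its existence result \cite[Prop.~5.3]{adjoints} and checks the formula against b), whereas your explicit adjunction chain $\sB(\lambda N,M)=\sA(\iota\lambda N,\iota M)^\Pi=\Hom_\Pi(\theta N,\sA(\un,\iota M))=\sB^0(N,\rho M)$ is self-contained and arguably cleaner. For d), your argument is essentially the paper's (reduce to Condition (ii) of Proposition~\ref{p7.1}, then combine Lemma~\ref{l7.2} with b)); you are just a bit more explicit about working in the $K$-linearisations $\sB_K$, $\sB^0_K$, which the paper glosses over, and you prove the full equivalence where only the forward implication is needed.
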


\begin{proof} a) Let $N\in \sB$. By \eqref{eq7.4}, $\End_\sB(N)$ is the centraliser of the semi-simple algebra  $Q\bar\Pi$ in $\End_\sA(\iota N)$, where $\bar \Pi$ (a finite group) is the image of $\Pi$ in $\mathrm{Aut}_\sA(\iota N)$ (cf. Proposition \ref{p7.2} a)). Since the latter is semi-simple, so is $\End_\sB(N)$  \cite[\S 14, no 5, th. 5 a)]{bbki}.

b) Write $\iota S\simeq \bigoplus_\alpha S_\alpha^{n_\alpha}$ where the  $S_\alpha$'s are simple and pairwise non-isomorphic. If $\sA(\un,\iota S)\ne 0$, then $S_{\alpha_0}\simeq \un$ for some $\alpha_0$, and $\sA(\un,S_\alpha)=\sA(S_\alpha,\un)=0$ for any $\alpha\ne \alpha_0$. But then, $\sA(\un,g_*S_\alpha)=\sA(g_*S_\alpha,\un)=0$ for any $\alpha\ne \alpha_0$. Thus  $\un^{n_{\alpha_0}}$ splits off as a direct summand of the descent datum $S$, and we must have $\iota S\simeq \un^{n_{\alpha_0}}$; the isomorphism $S\simeq \sA(\un,\iota S)$ is then clear.  The converse is obvious. 

In c), $\rho$ exists by \cite[Prop. 5.3]{adjoints}, which also gives the following recipe: for $S$ simple, $\rho(S)=S$ if $S\in \sA^0$ and $\rho(S)=0$ otherwise. It suffices to check that this matches with the formula of the statement, which follows from b).

In d), the exactness at $H$ (resp. $\Pi$) was proven in Proposition \ref{p7.2} c) (resp. e)). For the exactness at $G$,  we are reduced by a)  to checking Condition (ii) of Proposition \ref{p7.1}. By Lemma \ref{l7.2}, we have the isomorphism $\omega(\iota S)^H=\sA(\un,\iota S)\otimes K$. If it is nonzero, $S\in \sB^0$ by b). But then the action of $G$ on $\omega(\iota S)$ factors through $\Pi$, hence $\Ker p$ acts trivially on $\omega(\iota S)$.
\end{proof}

We now assume that $\sA$ is semi-simple as in Proposition \ref{l7.3}, and has a weight grading. Let $N\in \sB$ be such that $M=\iota N$ is weakly polarisable with respect to $L\in \sA$ as in \S \ref{s7.1}, and verifies the hypothesis of Proposition \ref{p6.2}. Applying the latter and Proposition \ref{l7.3} to $\langle N\rangle \inj \langle M\rangle$, we get a short exact sequence
\begin{equation}\label{eq7.3}
1\to \bU(M)\by{\iota^*} G_M\by{\pi}\Pi_M\to 1
\end{equation}
where $G_M$ (resp. $\Pi_M$) is the Tannakian group of $\langle M\rangle$ (resp. of $\langle M\rangle^0:=\langle M\rangle\cap \sB^0$. It remains to compute $\Pi_M$.

\begin{lemma}\label{l7.4} The $\otimes$-category $\langle N\rangle^0$ is generated by $\End_\sA(M)$, viewed as a $\Pi$-module. Consequently, the group $\Pi_M$ in \eqref{eq7.3} is the smallest quotient of $\Pi$ which acts nontrivially on $\End_\sA(M)$.
\end{lemma}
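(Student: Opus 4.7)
The plan is to show that, as a Tannakian subcategory of $\sB^0\simeq \Rep_Q(\Pi)$ (Proposition \ref{p7.2} d)), $\langle N\rangle^0$ coincides with the subcategory $\langle E\rangle^\otimes$ generated by the single $\Pi$-representation $E:=\End_\sA(M)$; by Tannaka duality the latter corresponds to the quotient of $\Pi$ by $\Ker(\Pi\to \GL(E))$, yielding the second assertion. The inclusion $\langle E\rangle^\otimes\subseteq \langle N\rangle^0$ is immediate: by Proposition \ref{l7.3} c), $E = \rho(N\otimes N^\vee) \in \langle N\rangle^0$.

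For the reverse inclusion, I would argue in two steps. Using semi-simplicity of $\sB$ (Proposition \ref{l7.3} a)) and the additivity of $\rho$, every simple object of $\langle N\rangle^0$ is a direct summand of some $\rho(N^{\otimes a}\otimes (N^\vee)^{\otimes b})=\sA(\un,M^{\otimes a}\otimes (M^\vee)^{\otimes b})$. By Lemma \ref{l8} b) (noting $\un$ has weight $0$ and $M$ weight $1$ in $\sA$), this vanishes unless $a=b$, in which case duality identifies it with $\End_\sA(M^{\otimes a})$, carrying the $\Pi$-action by conjugation by $u_g^{\otimes a}$.

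It thus suffices to show each $\End_\sA(M^{\otimes a})$ lies in $\langle E\rangle^\otimes$. The key technical step is a Schur-Weyl-type generation: $\End_\sA(M^{\otimes a})$ is generated as a $Q$-algebra by (i) the image of $E^{\otimes a}$ (tensors of endomorphisms of $M$); (ii) the image of $Q[\fS_a]$ acting through the commutativity constraint of $\sA$; and (iii) finitely many ``polarisation composites'' built from the weak polarisation $\omega:M\simeq L\otimes M^\vee$ by insertion and contraction on pairs of tensor factors. After $\otimes_Q K$, $\End_\sA(M^{\otimes a})=\End_{\bU(M)}(V^{\otimes a})$ by Lemma \ref{l7.2} and Proposition \ref{p6.2}, and the claim is the classical second fundamental theorem of invariant theory for the similitude group $\bU(M)$ acting diagonally on $V^{\otimes a}$, combining the double centraliser theorem applied to the mutually commutant pair $(C(M),E\otimes K)$ (Proposition \ref{p1a} (vi)) with the polarisation constraint cutting $\bU(M)$ out of $C(M)^*\times \G_m$.

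Once this generation is in hand, $\Pi$-equivariance is routine: each $g_*$ is a strict symmetric monoidal endofunctor of $\sA$, so $\Pi$ fixes $\fS_a$ pointwise and acts on $E^{\otimes a}$ by the $a$-fold tensor of its action on $E$; the composites in family (iii) are $\Pi$-fixed because $L\in \sB$ in the intended applications (both the Lefschetz motive and the polarisation of an abelian variety descend to $k$), and by Lemma \ref{l6.2} they depend on $\omega$ only through the $\Pi$-invariant involution $\rho_M$. Consequently $\End_\sA(M^{\otimes a})$ is a $\Pi$-equivariant quotient of $E^{\otimes a}\otimes_Q W$ with $W$ a trivial $\Pi$-representation, hence lies in $\langle E\rangle^\otimes$. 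The main obstacle is the Schur-Weyl-type generation step: although classical for the similitude groups arising from abelian varieties (as tabulated in \cite[Table 2, p.~655]{milne2}), its uniform verification in the present abstract setting is the technical heart of the proof and may require a case analysis along the lines of \emph{loc.\ cit.}
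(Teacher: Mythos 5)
Your first step is correct and matches the paper: by Proposition~\ref{l7.3}~c), the adjoint $\rho$ exhibits $E:=\End_\sA(M)\simeq\sA(\un,\iota(N\otimes N^\vee))$ as an object of $\langle N\rangle^0$, and $\langle N\rangle^0$ is generated by the $\rho(N')=\sA(\un,\iota N')$ for $N'\in\langle N\rangle$. You also correctly use Lemma~\ref{l8}~b) and the weight grading to discard all but the weight-zero objects. The problem is what you do next: you steer toward proving a Schur--Weyl-type first/second fundamental theorem for the similitude group $\bU(M)$, and explicitly concede that this ``may require a case analysis along the lines of'' Milne. That is exactly the thing the lemma is set up to avoid, and it signals a gap rather than a proof.

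The gap is that you never invoke the \emph{hypothesis} of Proposition~\ref{p6.2}, which is a standing assumption in the sentence immediately preceding Lemma~\ref{l7.4} (``$M=\iota N$ \dots verifies the hypothesis of Proposition~\ref{p6.2}''). That hypothesis is precisely the surjectivity of $\sA(L,\Lambda^2(rM))^{\otimes n}\to\sA(L^n,\Lambda^{2n}(rM))$; it is the abstract form of the invariant-theoretic generation you are trying to re-derive, and it is handed to you for free. With it, the argument closes formally: reasoning as in the proof of Proposition~\ref{p6.2}, the weight-zero pieces $\sA(\un,\iota N')$ are subquotients of the $\sA(L^n,\Lambda^{2n}(rM))$; by the surjectivity hypothesis these are $\Pi$-equivariant quotients of $\sA(L,\Lambda^2(rM))^{\otimes n}$; and $\sA(L,\Lambda^2(rM))$ is a direct sum of copies of $\sA(L,\Lambda^2(M))$, itself a direct summand of $\sA(L,M^{\otimes 2})\simeq E$. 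So everything lies in $\langle E\rangle^\otimes$, with no invariant theory and no case analysis. Your version instead uses the \emph{conclusion} $H_M=\bU(M)$ of Proposition~\ref{p6.2} to translate the question into one about $\End_{\bU(M)}(V^{\otimes a})$, which is both a detour and nonuniform. A secondary issue: your $\Pi$-equivariance of the ``polarisation composites'' is justified only by ``$L\in\sB$ in the intended applications'', which is an appeal to the example rather than to the abstract hypotheses of the lemma; the route through the surjectivity hypothesis avoids having to say anything about $L$ at all, since $L$ never needs to be centered or descended.
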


\begin{proof} By Proposition \ref{l7.3} c) applied to the inclusion $\langle N\rangle \inj \langle M\rangle$, $\langle M\rangle^0$ is generated by the objects $\sA(\un,\iota N')$ for $N'\in \langle N\rangle$. This is $0$ unless $\iota N'$ has weight $0$, so, reasoning as in the proof of Proposition \ref{p6.2}, we may restrict to $\sA(L^n,\iota N')$ for $\iota N'$ of the form $\Lambda^{2n}(rM)$ ($n,r>0$), and then to $n=1$ by the hypothesis of Proposition \ref{p6.2}. But $\sA(L,\Lambda^2(rM))$ is a direct sum of copies of $\sA(L,\Lambda^2(M))$ and of $\sA(L,M^{\otimes 2})\simeq \End_\sA(M)$, and the former is a direct summand of the latter. 
\end{proof}

\subsection{Proof of Theorem \ref{t5}}\label{s7.2} We first consider $\sA=\LMot_\num(k)_0$ as in Corollary \ref{c1} (here $Q=\Q$); the hypotheses on semi-simplicity and weight grading are granted by this corollary. We take $M=Lh^1(A)$: note that it generates the same $\otimes$-subcategory as $Lh(A)$, by the same corollary.  In view of \eqref{eq5.1}, any polarisation of $A$ yields an isomorphism  \eqref{eq10}. Moreover,  Lemma \ref{l5.1} implies that the surjectivity hypothesis of Proposition \ref{p6.2} is verified, because it implies that the morphism  $L^1(A^r)^{\otimes n}\to L^n(A^r)$ in this proposition is the one given by the intersection product: this computes the Tannakian group of $\langle A\rangle$ as $\bU_A$ (cf. \eqref{eq0.4}). By Proposition \ref{p5.1}, this yields an exact sequence
\[1 \to \prod_{A\in S_0} \SU_A\to \Aut^\otimes(\omega)\to \G_m\to 1\]
where $S_0$ is the set of isomorphism classes of simple abelian $k$-varieties. 

When $k$ is separably closed, we have $\LMot_\num(k)=\LMot_\num(k)_0$: this proves the isomorphism of Theorem \ref{t5} a). 

To prove b), we apply Proposition \ref{l7.3} and Lemma \ref{l7.4}. For this, we need first to show that the action of $\Gamma$ on  $\LMot(k_s)$ described after Definition \ref{d1} is continuous in the sense of Definition \ref{d7.1}, and then that $\LMot_\num(k)$ is equivalent to the corresponding category of continuous descent data in the sense of Definition \ref{d7.2}.

Let $A\in \Ab(k_s)$. Then $A$ is defined over some finite subextension $E/k$ of $k_s/k$, which means that there exists $A_0\in \Ab(E)$ and an isomorphism $A\simeq A_0\otimes_E k_s$. Hence, if $U=Gal(k_s/E)$ and $n\in \Z$, $Lh(A)\otimes \L^n$ is isomorphic to the $U$-centered motive $M\otimes_E k_s$ where $M=Lh(A_0)\otimes \L^n\in \LMot_\num(E)$. Moreover, $R=\End_{\LMot_\num(k_s)}(M)$ is the colimit of the $\End_{\LMot_\num(E')}(M\otimes_E E')$ for $E'\subseteq k_s$ a finite extension of $E$, hence $U$ acts continuously on $R$. Therefore the action of $\Gamma$ on $\LMot(k_s)$ is continuous, thanks to Lemma \ref{l7.5} c).

The canonical continuous descent datum on $A_{k_s}$ attached to $A\in \LCorr_\num(k)$  provides a $\otimes$-functor from $\LCorr_\num(k)$ to the category of continuous descent data on $\LCorr_\num(k_s)$ for the action of $\Gamma$. This functor is fully faithful by \eqref{eq4.1} and \eqref{eq7.4},   hence extends to a fully faithful functor from $\LMot_\num(k)$ to the category $\sB$ of continuous descent data on $\sA=\LMot_\num(k_s)$. 

It remains to show its essential surjectivity. Let $N=(M,u_g)\in \sB$. Choose a finite subextension  $E/k$ of $k_s/k$ and $M_0\in \LMot_\num(E)$ such that $M\simeq (M_0)_{k_s}$; by the continuity of $N$, up to enlarging $E$ we may further assume that $U=Gal(k_s/E)$ is such that $u_g=1$ for $g\in U$, and moreover is normal in $\Gamma$. Let $G=\Gamma/U=Gal(E/k)$; for $g\in G$, $u_g$ descends to an isomorphism $M_0\iso g_* M_0$ that we still denote by $u_g$. Moreover we have a canonical isomorphism $\lambda_E(M_0)\simeq \lambda_E(g_* M_0)$, where $\lambda_E$ is the right adjoint of Lemma \ref{l4.1}. This implies that the $\lambda_E(u_g)$'s define a homomorphism $\rho$ from $\Q[G]$ to the endomorphism ring of $\lambda_E(M_0)$. Let $e=\frac{1}{|G]} \sum_{g\in G} \rho(g)$ be the corresponding idempotent, and let $M_1=\IM e\in \LMot_\num(k)$. I claim that $M_1\mapsto N$ under $\LMot_\num\to \sB$. Indeed, the inclusion $M_1\to \lambda_E(M_0)$ yields by adjunction a morphism $i_E M_1 \to M_0$, which is seen to be an isomorphism by using \eqref{eq5.6}. This computation also shows that the canonical descent datum of $i_EM_1$ matches with that of $N$.

This concludes the proof of Theorem \ref{t5}. \qed

\begin{rk}  Proposition \ref{l7.3} also applies to prove \cite[Prop. 6.23 (a)]{dm} -- except for the connectedness of $G(k_s)$ which would require the ``Hodge = absolute Hodge'' conjecture --, and \cite[4.6, exemples]{pour}. It also applies to numerical Grothendieck motives under the standard conjecture D or, unconditionally, to this category restricted to numerical motives of abelian type in characteristic $0$. Since the Tannakian group $G_s$ is not connected (see \cite[Table 2 p. 655]{milne2}), one sees that the expected connectedness of the motivic Galois group over separably closed fields cannot be hoped to be proven by purely formal arguments in the above style.
\end{rk}

\section{Remarks and questions}

\begin{rks} 
1) The algebra $\sB_{n,\Q}$ defined by Ancona in \cite[Def. 5.2]{ancona} is contained in $\End_{\LMot(k)}(h^1(A)^{\otimes n})$: this allows one to refine his lifting results. Similarly, O'Sullivan's lifting theorem of \cite{os} refines to $\LMot(k)$.\\
2)  For an abelian $k$-variety $A$, let $\tilde L^*(A)=L^*(A_{k_s})^G
$; equivalently, this is the subalgebra of $CH^*(A)$ generated by transfers of intersections of divisor classes over finite separable extensions of $k$. One would like to compute $\tilde L^*(A)$ in the style of Theorem \ref{p1}.  We have the following partial result:
\end{rks}

\begin{prop} Suppose that $G$ acts trivially on $CH^1_0(A_{k_s})\simeq \NS(A_{k_s})$. Then $\tilde L^*(A)$ is generated by $CH_1^0(A)$ and $CH_0(A)$ under the Pontrjagin product.
\end{prop}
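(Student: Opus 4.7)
The plan is to pass to the separable closure $k_s$ and combine Theorem~\ref{p1} with Galois descent. By Theorem~\ref{p1} applied to $A_{k_s}$, one has $L^*(A_{k_s})=P_*(A_{k_s})$, and by Galois descent for Chow groups with $\Q$-coefficients, pullback induces a ring isomorphism $\tilde L^*(A)\iso P_*(A_{k_s})^G$ (the Pontrjagin product is respected since $\mu_{A_{k_s}}=\mu_A\times_k k_s$ is a base change, so $p^*\mu_*=\mu_*(p\times p)^*$). So the claim reduces to describing $P_*(A_{k_s})^G$.

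Under the hypothesis I first check that $G$ acts trivially on $CH_1^{(0)}(A_{k_s})$. Indeed, the hypothesis says $G$ fixes $CH^1_{(0)}(A_{k_s})=\NS(A_{k_s})\otimes\Q$ pointwise, so in particular every ample symmetric class $d$ is $G$-fixed; by Lemma~\ref{l1}, $CH_1^{(0)}(A_{k_s})$ is $\Q$-spanned by the $\gamma^{g-1}(d)$, which are therefore also $G$-fixed. By Galois descent this also gives $CH_1^{(0)}(A)\iso CH_1^{(0)}(A_{k_s})$.

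Writing $\Pi\subseteq CH_*(A_{k_s})$ for the Pontrjagin subalgebra generated by $CH_1^{(0)}(A_{k_s})$, which is now a trivial $G$-module, and noting that the identity $[a]*[b]=[a+b]$ identifies $CH_0(A_{k_s})(k_s)$ with the group algebra $\Q[A(k_s)]$ under $*$, the Pontrjagin product gives a $G$-equivariant surjection
\[\Phi\colon\Pi\otimes_\Q\Q[A(k_s)]\surj P_*(A_{k_s}),\qquad y\otimes z\mapsto y*z.\]
Since $G$ acts trivially on the first factor and we are in characteristic $0$, taking $G$-invariants is exact on this surjection (each element is fixed by an open subgroup, reducing the averaging to a finite group), so $P_*(A_{k_s})^G=\Phi(\Pi\otimes_\Q\Q[A(k_s)]^G)$. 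Finally, $\Q[A(k_s)]^G$ is spanned by orbit sums $\sum_{a'\in Ga}[a']$, each of which is the pullback to $A_{k_s}$ of the separable closed point $\bar a\in A$ with residue field $k(a)$, and hence comes from $CH_0(A)\otimes\Q$.

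Combining, any element of $\tilde L^*(A)$ is the pullback of a sum of products $\tilde y*\tilde z$ with $\tilde y$ a Pontrjagin product of elements of $CH_1^{(0)}(A)$ and $\tilde z\in CH_0(A)\otimes\Q$; the reverse inclusion is immediate from Theorem~\ref{p1} applied over $k_s$. The main technical obstacle is justifying the exactness of $G$-invariants on the surjection $\Phi$, which relies on the semi-simplicity of continuous $\Q$-representations of the profinite group $G$; this is standard in characteristic $0$, reducing via continuity to finite-group averaging. A secondary point is to verify the compatibility of pullback with the Pontrjagin product, which follows from flat base change applied to $\mu$ together with the fact that the exterior product $x\times y$ is a combination of pullbacks along the two projections.
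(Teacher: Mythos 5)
Your proof is correct; it reaches the same intermediate facts as the paper (triviality of the $G$-action on $CH_1^{(0)}(A_{k_s})$ via Lemma~\ref{l1}, then reduction to Theorem~\ref{p1}) but via a different descent mechanism. The paper's proof observes that $\tilde L^*(A)$ is by definition generated by transfers $t_{l/k}(\xi)$ of elements $\xi\in L^*(A_l)=P_*(A_l)$ over finite separable $l/k$, shows by a transfer argument that $CH_1^{(0)}(A)\to CH_1^{(0)}(A_l)$ is surjective so that the $CH_1^{(0)}$-factors of $\xi$ can be replaced by pullbacks from $A$, and then pulls those factors out of $t_{l/k}$ using the projection formula for the Pontrjagin product, Lemma~\ref{l2}~b); what remains under $t_{l/k}$ is a $0$-cycle, which lands in $CH_0(A)$. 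You instead reach $P_*(A_{k_s})^G$ through the presentation $\Phi\colon\Pi\otimes_\Q\Q[A(k_s)]\surj P_*(A_{k_s})$ and invoke exactness of $(-)^G$ on continuous $\Q$-representations of the profinite group $G$ (semi-simplicity), then recognize $\Q[A(k_s)]^G$ as spanned by orbit sums coming from $CH_0(A)$. Both routes are sound and of comparable length; the paper's is more elementary (no appeal to semi-simplicity), while yours makes the role of $G$-invariance more structural. One wording slip: $\Q[A(k_s)]\to CH_0(A_{k_s})(k_s)$ is a surjection but not an ``identification'' (rational equivalence is highly nontrivial on $0$-cycles of an abelian variety), so $\Phi$ should be phrased as the composite $\Pi\otimes_\Q\Q[A(k_s)]\to\Pi\otimes_\Q CH_0(A_{k_s})(k_s)\to P_*(A_{k_s})$; this does not affect your argument, which uses only surjectivity and $G$-equivariance of $\Phi$.
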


\begin{proof} By the isomorphism in the proof of Lemma \ref{l0}, $G$ acts trivially on $CH_1^0(A_{k_s})$. It follows by a transfer argument that the map $CH_1^0(A)\to CH_1^0(A_l)$ is surjective.  The conclusion then follows from Theorem \ref{p1} and Lemma \ref{l2} b). (A variant would be to use \cite[Prop. 4 a)]{beauville2}.)
\end{proof}

\begin{qn} More varieties than abelian varieties have a motive in $\LMot(k)$. For example a (geometrically connected) curve $C$, since $h^1(C)\simeq h^1(J(C))$ and the Chow-Künneth projectors of $J(C)$ are in $\LMot(k)$ by Corollary \ref{c1}. Therefore, products of curves as well. In general, let $X$ be a smooth projective variety. Suppose that the motive of $X$ in $\sM$ is a direct summand of the motive of an abelian variety $A$. When is the corresponding projector in $L^*(A\times A)$? Special case: there exists a dominant rational map $A\tto X$.

Variant: in Corollary \ref{c1}, take for $\sim$ numerical equivalence. Then \eqref{eqb0} is a functor between semi-simple abelian categories. Let $S$ be a simple object of $\LMot_\sim(k)$. Does it remain simple in $\sM_\sim(k)$?

In particular, do the refined (Chow-)Künneth decompositions of \cite[Th. 7.7.3]{kmp}
\[h^i(A) \simeq \bigoplus_{j} h^{i,j}(A)\otimes \L^j\]
coincide in the two categories? This is true for $i=2$ by a generalisation of \cite[Proof of Prop. 7.2.3]{kmp}.
However the answer is no in general, as pointed out by Peter O'Sullivan by the following simple argument:

\emph{Suppose for example that $k$ is separably closed and we work modulo homological = numerical equivalence (Theorem \ref{t3} a)). There is then an abelian variety $A$ such that for some $i$ the dimension $d'$of $L^i_\num(A)$ over $\Q$ is strictly less than the dimension $d$ of $A^i_\num(A)$ over $\Q$. Then $Lh^{2i,i}(A)$ is the direct sum of $d'$ copies of the unit motive, while $h^{2i,i}(A)$ is the direct sum of $d > d'$ copies. It follows that $Lh^{2i,j}(A)$ for some  $j < i$ has a simple direct summand which is not simple in the category of ordinary motives.}
\end{qn}


\begin{thebibliography}{11}
\bibitem{ancona} G. Ancona {\it Décomposition de motifs abéliens}, Manuscripta Math. {\bf 146} (2015), 307--328.
\bibitem{pour} Y. André {\it Pour une théorie inconditionnelle des motifs}, Publ. Math. IHÉS {\bf 83} (1996), 5--49.
\bibitem{andre} Y. André {\it Cycles de Tate et cycles motivés sur les variétés abéliennes en caractéristique $p>0$},  J. Inst. Math. Jussieu {\bf 5} (2006), 605--627.
\bibitem{akos}  Y. André, B. Kahn (with an appendix by P. O'Sullivan) {\it Nilpotence, radicaux et structures monoïdales}, Rendiconti Sem. Mat. Univ. Padova {\bf 108} (2002), 107--291.
\bibitem{akcorr} Y. André, B. Kahn {\it Erratum: nilpotence, radicaux et structures monoïdales}, Rendiconti Sem. Mat. Univ. Padova {\bf 113} (2005), 125--128.
\bibitem{beauville1} A. Beauville {\it  Quelques remarques sur la transformation de Fourier dans l'anneau de Chow d'une variété abélienne}, Algebraic geometry (Tokyo/Kyoto, 1982), Lect. Notes in Math. {\bf 1016}, Springer, 1983, 238--260. 
\bibitem{beauville2} A. Beauville {\it Sur l'anneau de Chow d'une variété abélienne}, Math. Ann. {\bf 273} (1986), 647--651.
\bibitem{bbki} N. Bourbaki Algèbre, ch. 8: {\it Modules et anneaux semi-simples}, Springer, 2012.
\bibitem{deligne-900} P. Deligne {\it Hodge cycles on abelian varieties} (notes by J. Milne), {\it in} Hodge Cycles, Motives, and Shimura Varieties, Lect. Notes in Math. {\bf 900}, Springer, 1982, 9--100.
\bibitem{dm} P. Deligne and J. Milne {\it Tannakian categories}, {\it in} Hodge Cycles, Motives, and Shimura Varieties, Lect. Notes in Math. {\bf 900}, Springer, 1982, 101--228.
\bibitem{deligne0} P. Deligne {\it Catégories tannakiennes}, The Grothendieck Festschrift vol II, Progress in Mathematics {\bf 87}, Birkh\"auser, 1990,111--195.
\bibitem{den-murre} C. Deninger, J. P. Murre {\it Motivic decomposition of abelian schemes and the Fourier transform}, J. reine angew. Math. {\bf 422} (1991), 201--219.
\bibitem{griffiths} P. A. Griffiths {\it On the periods of certain rational integrals: II},  Annals of Math. {\bf 90} (1969), 496--541.
\bibitem{imai} H. Imai {\it On the Hodge group of some abelian varieties}, Kodai Math. Sem. Rep. {\bf 27} (1976), 367--372.
\bibitem{jannsen} U. Jannsen {\it Motives, numerical equivalence and semi-simplicity}, Invent. Math. {\bf 107} (1992), 447--452.
\bibitem{adjoints} B. Kahn {\it Motifs et adjoints}, Rend. Sem. mat. Univ. Padova {\bf 139} (2018), 77--128.
\bibitem{TJM} B. Kahn {\it Albanese kernels and Griffiths groups} (with an appendix by Yves Andr\'e), Tunis. J. Math. {\bf 3} (2021), 589--656.
\bibitem{ff-ellcurves} B. Kahn {\it The fullness conjectures for products of elliptic curves}, preprint, 2023, \url{https://arxiv.org/abs/2303.06690}.
\bibitem{stacks} B. Kahn {\it Galois descent for motivic theories}, preprint, 2023/2024, \url{https://arxiv.org/abs/2312.01825}.
\bibitem{kmp} B. Kahn, J. Murre, C. Pedrini {\it On the transcendental part of the motive of a surface}, Algebraic cycles and motives, LMS Series {\bf 344} (2), Cambridge University Press, 2007, 143--202.
\bibitem{kim} S.-I. Kimura {\it Chow groups are finite dimensional, in some sense}, Math. Ann. {\bf 331} (2005), 173--201.
\bibitem{ku} K. Künnemann {\it A Lefschetz decomposition for Chow motives of abelian schemes}, Invent. Math. {\bf 113} (1993), 85--102. 
\bibitem{milne} J. Milne {\it Lefschetz motives and the Tate conjecture}, Compositio Math. {\bf 117} (1999) 45--76.
\bibitem{milne2} J. Milne {\it Lefschetz classes on abelian varieties}, Duke Math. J. {\bf 96} (1999), 639--675.
\bibitem{murre} J. P. Murre {\it On a conjectural filtration on the Chow groups of an algebraic variety, I, II}, Indag. Math. {\bf 4} (1993), 177--188 and 189--201.
\bibitem{os} P. O'Sullivan {\it Algebraic cycles on an abelian variety}, J. reine angew. Math. {\bf 654} (2011), 1--81.
\bibitem{pol} A. Polishchuk {\it Fourier-stable subrings in the Chow rings of abelian varieties}, Math. Res. Lett. {\bf 15} (2008), 705--714.
\bibitem{saa} N. Saavedra Rivano Catégories tannakiennes, Lect. Notes in Math. {\bf 265}, Springer, 1972.
\bibitem{schoen} C. Schoen {\it Albanese standard and Albanese exotic varieties}, J. London Math. Soc. {\bf 74} (2006), 304--320.
\bibitem{schoen2} C. Schoen {\it Complex varieties for which the Chow group mod $n$ is not finite}, J. Alg. Geometry {\bf 11} (2002), 41--100.
\bibitem{scholl} A. Scholl {\it Classical motives}, {\it in} Motives (U. Jannsen, S. Kleiman, J.-P. Serre, eds), Proc. Symposia pure Math. {\bf 55} (I), AMS, 1994, 163--187.
\bibitem{voe} V. Voevodsky {\it A nilpotence theorem for cycles algebraically equivalent to zero}, 
IMRN {\bf 1995}, 187--198. 
\bibitem{voi} C. Voisin {\it Remarks on zero-cycles on self-products of varieties}, {\it in} Moduli of vector bundles (Sanda, 1994; Kyoto, 1994), Lect. Notes in Pure and Appl. Math. {\bf 179}, Dekker, 1996, 265--285.
\end{thebibliography}
\end{document}